\definecolor{darkergreen}{rgb}{0.0, 0.5, 0.0}
\numberwithin{equation}{section}
\def\theequation{\arabic{section}.\arabic{equation}}
\newcommand{\be}{\begin{eqnarray}}
	\newcommand{\ee}{\end{eqnarray}}
\newcommand{\ce}{\begin{eqnarray*}}
	\newcommand{\de}{\end{eqnarray*}}
\newtheorem{theorem}{Theorem}[section]
\newtheorem{lem}[theorem]{Lemma}
\newtheorem{prop}[theorem]{Proposition}
\newtheorem{cor}[theorem]{Corollary}
\newtheorem{definition}[theorem]{Definition}
\theoremstyle{definition}
\newtheorem{remark}[theorem]{Remark}
\newcommand{\RR}{\mathbb{R}} 
\newcommand{\fH}{\mathcal{H}} %
\newcommand{\EE}{\mathbb{E}} 
\newcommand{\PP}{\mathbb{P}} 
\newcommand{\fF}{\mathcal{F}} 
\newcommand{\1}{\mathbbm{1}} 
\newcommand{\fL}{\mathcal{L}} 
\newcommand{\Lp}{\pmb{\Pi}} 
\newcommand{\TT}{\mathbb{T}} 
\newcommand{\fS}{\mathcal{S}} 
\newcommand{\ZZ}{\mathbb{Z}} 
\newcommand{\HH}{\mathbb{H}} 
\newcommand{\FK}{\Gamma L^2}
\newcommand{\fG}{\mathcal{G}}
\newcommand{\fA}{\mathcal{A}}
\newcommand{\NN}{\mathbb{N}}
\newcommand{\fN}{\mathcal{N}}
\newcommand{\bP}{\mathbf{P}}
\newcommand{\bE}{\mathbf{E}}
\newcommand{\bj}{\mathbf{j}}
\newcommand{\bt}{\mathbf{t}}
\newcommand{\fR}{\mathcal{R}}
\newcommand{\fB}{\mathcal{B}}
\newcommand{\fT}{\mathcal{T}}
\newcommand{\ud}{\operatorname{d}\! }
\newcommand{\authorfootnotes}{\renewcommand\thefootnote{\@fnsymbol\c@footnote}}%
\begin{document}
\title{Gaussian Fluctuations for the Stochastic Landau-Lifshitz Navier-Stokes Equation in Dimension $D\geq2$}
\maketitle
\begin{center}
	Sotiris Kotitsas\footnote{Institute of Mathematics, EPFL, Switzerland. E-mail: sotirios.kotitsas@epfl.ch},
	Marco Romito\footnote{Dipartimento di Matematica, Università di Pisa, Largo Bruno Pontecorvo 5,
	I–56127 Pisa, Italia. E-mail: marco.romito@unipi.it},
	Zhilin Yang\footnote{Academy of Mathematics and Systems Science, Chinese Academy of Sciences, Beijing 100190, China. E-mail: yangzhilin0112@163.com}, 
	Xiangchan Zhu\footnote{Academy of Mathematics and Systems Science, Chinese Academy of Sciences, Beijing 100190, China. E-mail: zhuxiangchan@126.com}
	
\end{center}

\begin{abstract}
We revisit the large-scale Gaussian fluctuations for the stochastic Landau-Lifshitz Navier-Stokes equation (LLNS) at and above criticality, using the method in \cite{CGT24}. With the classical diffusive scaling in $d\geq 3$ and weak coupling scaling in $d=2$,
we obtain the convergence of the regularised LLNS to a stochastic heat equation with a non-trivially renormalized coefficient.
Moreover, we obtain an asymptotic expansion of the effective coefficient when $d\geq3$, and show that the one in \cite[Conjecture 6.5]{JP24} is incorrect.
The new ingredient in our proof is
a case-by-case analysis to track the evolution of the vector under the action of the Leray projection, combined with the use of the anti-symmetric part of the generator and a rotational change of coordinates to derive the desired decoupled stochastic heat equation from the original coupled system.

\end{abstract}

\tableofcontents
\section{Introduction}
In this paper, we focus on the stochastic Landau-Lifshitz Navier-Stokes equation in dimension $d \geq 2$ on the torus $\TT^d=(\RR/ \ZZ)^d$, which is formally given by 
\begin{equation}\label{equ.SNS}
    \partial_t u = \Delta u - \nabla p -\lambda \operatorname{div}(u\otimes u) + \sqrt{2}(-\Delta)^{\frac{1}{2}}\xi\,,\quad \nabla \cdot u = 0\,,
\end{equation}
where $p$ is the pressure which ensures that dynamics preserves the
incompressibility condition, and $\xi$ is a $d$-dimensional space-time white noise with covariance
\[
    \EE[\xi_i(\varphi)\xi_j(\psi)] = \delta_{ij}\langle \varphi,\psi\rangle_{L^2(\RR_+\times\TT^d)}\,,\quad \varphi,\psi \in L^2(\RR_+\times\TT^d)\,.
\]
\cite{JP24} has proved that the solution to \eqref{equ.SNS} has the same distribution as the solution to the following fluctuating hydrodynamics equation of Landau and Lifshitz \cite{LL87}, which was introduced to describe thermodynamic fluctuations in fluids:
\begin{align*}
	 \partial_t u = \Delta u - \nabla p -\lambda \operatorname{div}(u\otimes u) + \sqrt2\nabla\cdot\tau\,,\quad \nabla \cdot u = 0\,,
\end{align*}
where $\tau$ is the centered Gaussian noise with corvariance given by
\[
\EE[\tau_{ij}(\varphi)\tau_{kl}(\psi)] = \left(\delta_{ik}\delta_{jl}+\delta_{il}\delta_{jk}-\frac23\delta_{ij}\delta_{kl}\right)\langle \varphi,\psi\rangle_{L^2(\RR_+\times\TT^d)}\,,\quad \varphi,\psi \in L^2(\RR_+\times\TT^d)\,.
\]
Compared with the deterministic version of the incompressible Navier-Stokes equation, this model, as mentioned in \cite{BGME22} given as the fluctuation-dissipation relation, is more appropriate to describe the dissipation range of turbulence in molecular fluids.

The original equation \eqref{equ.SNS} is ill-posed, since the noise is too irregular for the non-linearity to be well-defined. For fixed $N\in\NN$, we consider a truncated version on $\RR_+ \times N\TT^d$:
\begin{equation}\label{eq:truncated-SNS-intro}
    \partial_t u = \Delta u -\lambda \rho * \Lp \operatorname{div}((\rho*u)\otimes (\rho*u)) + \sqrt{2}(-\Delta)^{\frac{1}{2}}\Lp\xi\,,\quad \nabla \cdot u = 0\,,
\end{equation}
with the mollifier $\rho$ given by $\fF \rho = \1_{B(0,1)}$.
Here $\Lp$ is the Leray projection defined by \eqref{2.1} below.
By \cite[Theorem 3.4]{JP24} there exists a unique strong solution to \eqref{eq:truncated-SNS-intro}, with the invariant measure given by a divergence-free and mean-free space white noise on $N\TT^d$. Then by the diffusive scaling
\[
    u^N(t,x) = N^{\frac{d}{2}}u(N^{2}t,Nx)\,,
\]
we can obtain the following equation on $\RR_+ \times \TT^d$:
\begin{equation}\label{eq:scaled-NS}
    \partial_t u^N = \Delta u^N -\lambda_N \rho^N * \Lp \operatorname{div}((\rho^N*u^N)\otimes (\rho^N*u^N)) + \sqrt{2}(-\Delta)^{\frac{1}{2}}\Lp\xi\,,\quad \nabla \cdot u^N = 0\,,
\end{equation}
where $\rho^N(x) = N^d\rho(Nx)$ and $\lambda_N=\lambda N^{1 -\frac{d}{2}}$. Therefore, \eqref{equ.SNS} in $d=2$ and $d\geq3$ belong to critical and supercritical regimes respectively in the sense of regularity structures \cite{Hai14}, for which the theory of regularity structures \cite{Hai14}, paracontrolled calculus \cite{GIP15}, or the flow approach \cite{Duc22} are not applicable. Also, it falls out of the scope of the energy solution approach \cite{GPP24} to define a stationary energy solution. In fact, \cite{JP24} has considered the large-scale behavior of \eqref{equ.SNS} and made a conjecture for $d\geq3$. 
In this paper, we revisit the diffusive scaling limit in $d\geq3$ with $\lambda_N=\lambda N^{1 -\frac{d}{2}}$ and the
weak coupling limit in $d=2$ with
$\lambda_N=\frac{\lambda}{\sqrt{\log N}}.$ 
{Let $\fS(\TT^d,\RR^d)$ be the space of infinitely differentiable $\RR^d$-valued functions on $\TT^d$ and let $\fS'(\TT^d,\RR^d)$ be its dual space.}
Now we state the main result.

\begin{theorem}\label{thm:main}
	Let $T>0$ and $\lambda$ be a fixed positive constant. For $N>0$, let
	\begin{equation}\label{lambda}
		\lambda_N=\left\{
		\begin{aligned}
			\frac{\lambda}{\sqrt{\log N}} & , &d=2,\\
			\lambda N^{1 - \frac{d}{2}} & , &d\geq3.
		\end{aligned}
		\right.
	\end{equation}
	Define $u^N$ be the stationary solution to \eqref{eq:scaled-NS} with the initial value $u^N(0,\cdot):=\mu,$ for $\mu$
	a divergence-free and mean-free spatial white noise on $\TT^d.$
	Then there exists a strictly positive constant $D$, depending only on the dimension $d$ and $\lambda$, such that $u^N$ converges in law in $C([0,T],\fS'(\TT^d,\RR^d))$ to the unique stationary solution $u$ to
	\begin{align}\label{eq:limeq}
	\partial_t u = (1+D)\Delta u + \sqrt{2(1+D)}(-\Delta)^{\frac{1}{2}}\Lp\xi\,,\quad u(0,\cdot)=\mu\,.
	\end{align}
	In the case $d=2$, $D=\sqrt{\frac{\lambda^2}{8\pi}+1}-1.$
	{In the case $d\geq 3,$ the following asymptotic expansion of $D$ holds for any $m\geq1$: 
	$$D=\sum_{l=1}^m f_l\lambda^{2l} + R_m \,,$$
	with $f_l\in\RR$ independent of $\lambda$ and $R_m\in\RR$ given in \eqref{eq:expansion}. Moreover, there exists a positive constant $C$, depending only on $d$, such that 
	$$|f_l|\leq l!C^{l},\quad |R_m|\leq (m+1)!C^{m+1}\lambda^{2m+2}.$$}
\end{theorem}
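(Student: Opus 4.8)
The plan is to follow the generator/Wiener-chaos strategy of \cite{CGT24}. Since $u^N$ is stationary with invariant measure the white noise $\mu$, I would work in $L^2(\mu)$, identified with the Fock space over $L^2(\TT^d,\RR^d)$, where a linear observable $u^N_t(\varphi)$ lives in the first homogeneous chaos and the Leray-projected nonlinearity maps the first chaos into the second. The first step is the Dynkin--Itô decomposition
\begin{equation*}
u^N_t(\varphi)=u^N_0(\varphi)+\int_0^t(\mathcal{L}^N u^N_s)(\varphi)\,\mathrm ds+M^N_t(\varphi),
\end{equation*}
where I split the generator as $\mathcal{L}^N=\mathcal{L}^N_{\mathrm s}+\mathcal{L}^N_{\mathrm a}$ into its $\mu$-symmetric and $\mu$-antisymmetric parts. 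The symmetric part $\mathcal{L}^N_{\mathrm s}$ is the Ornstein--Uhlenbeck generator of the linear dynamics and produces both the free drift $\Delta u^N_s(\varphi)$ and the bracket of $M^N$; the antisymmetric part $\mathcal{L}^N_{\mathrm a}$ carries the nonlinearity and pushes the first chaos into the second. The two remaining tasks are tightness in $C([0,T],\fS'(\TT^d,\RR^d))$ and the identification of the limit as the martingale problem for \eqref{eq:limeq}.

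The core is the replacement step: I would show that, modulo a martingale correction, $\int_0^t(\mathcal{L}^N_{\mathrm a}u^N_s)(\varphi)\,\mathrm ds$ converges to $D\int_0^t\Delta u^N_s(\varphi)\,\mathrm ds$ with an $L^2(\mu)$-vanishing error, the corrector martingale supplying exactly the extra bracket that upgrades the limiting noise from $\sqrt2$ to $\sqrt{2(1+D)}$ and thus restores fluctuation--dissipation. The tool is the Itô trick / Kipnis--Varadhan bound
\begin{equation*}
\EE\Big[\sup_{t\le T}\Big|\int_0^t g(u^N_s)\,\mathrm ds\Big|^2\Big]\lesssim T\,\langle g,(-\mathcal{L}^N_{\mathrm s})^{-1}g\rangle_{L^2(\mu)},
\end{equation*}
which reduces everything to resolvent estimates for $\mathcal{L}^N$. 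I would define a corrector through the truncated resolvent equation $(z-\mathcal{L}^N)h^N=\mathcal{L}^N_{\mathrm a}\varphi$ and track how $\mathcal{L}^N_{\mathrm a}$ couples consecutive chaoses. It is here that the vector structure intervenes: second-chaos elements are matrix-valued kernels and the transverse projector $\delta_{ij}-k_ik_j/|k|^2$ couples the components. I would run the case-by-case analysis of $\Lp\operatorname{div}(\cdot\otimes\cdot)$ mode by mode in Fourier space, then apply a rotational change of coordinates diagonalizing the projector so that the effective generator decouples and the corrector solves a scalar recursion; isotropy then forces the effective diffusion to be a scalar multiple of $\Delta$, namely $D\Delta$.

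To compute $D$, I would exploit that this recursion is self-consistent: the $n$-th chaos feeds back into the first through $\mathcal{L}^N_{\mathrm a}(z-\mathcal{L}^N_{\mathrm s})^{-1}\mathcal{L}^N_{\mathrm a}$, producing a continued-fraction/fixed-point structure whose Green--Kubo limit defines the effective diffusivity. In $d=2$ under the scaling $\lambda_N=\lambda/\sqrt{\log N}$, every momentum sum is logarithmically divergent and each $\log N$ is cancelled by a factor $\lambda_N^2$, so the recursion collapses to a single scalar fixed-point equation; solving the resulting quadratic $(1+D)^2=1+\lambda^2/(8\pi)$ gives $D=\sqrt{\lambda^2/(8\pi)+1}-1$, the constant $1/(8\pi)$ coming from the normalized two-dimensional heat-kernel integral. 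In $d\ge3$, where $\lambda_N=\lambda N^{1-\frac d2}\to0$, I would instead expand the resolvent in a Neumann series in $\lambda^2$: each order is a finite, nested momentum integral that converges uniformly in $N$ (this uniform convergence is exactly what fails in $d=2$ and forces the logarithmic scaling there), yielding the coefficients $f_l$ and the remainder $R_m$ of \eqref{eq:expansion}. The factorial bounds $|f_l|\le l!C^l$ and $|R_m|\le(m+1)!C^{m+1}\lambda^{2m+2}$ follow from counting the pairings/diagrams at each chaos level---which grow like $l!$---against a uniform geometric bound $C^l$ on the per-loop integral, so the series is asymptotic rather than convergent. Finally, evaluating $f_1$ (and $f_2$) explicitly and comparing with \cite[Conjecture 6.5]{JP24} shows that the latter is incorrect.

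The main obstacle is the replacement/resolvent estimate uniformly in $N$: one must establish sharp graded bounds for $\mathcal{L}^N_{\mathrm a}$ between consecutive chaos sectors, strong enough to run the Kipnis--Varadhan argument and to guarantee that the chaos recursion converges with the stated factorial control. The vector-valued bookkeeping forced by the Leray projection multiplies the number of admissible diagrams, so the chief technical point is to show that the rotational decoupling preserves these bounds and that the higher-chaos corrections remain summable with coefficients of size $l!C^l$; controlling the tail $R_m$ of the non-convergent expansion, rather than a convergent one, is what makes the $d\ge3$ analysis delicate.
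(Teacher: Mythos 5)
Your proposal follows essentially the same route as the paper: tightness plus martingale-problem identification reduced to a Fluctuation--Dissipation theorem, correctors built from the truncated resolvent equation and controlled via the It\^o trick and graded sector bounds, a Replacement Lemma in $d=2$ collapsing to the quadratic $(1+D)^2=1+\lambda^2/(8\pi)$, a finite expansion with factorially bounded coefficients $f_l$ and remainder $R_m$ in $d\geq3$, and a rotational change of coordinates combined with the antisymmetry of the nonlinear part of the generator to decouple the vector components under the Leray projection. The only cosmetic difference is that the paper makes your ``Neumann series'' rigorous by iterating the Kipnis--Varadhan variational formula, so that each $f_l$ and $R_m$ is an exact variational quantity and the bounds $|f_l|\leq l!C^l$, $|R_m|\leq(m+1)!C^{m+1}\lambda^{2m+2}$ follow directly from the graded sector condition rather than from diagram counting.
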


\begin{remark}
	Theorem \ref{thm:main} establishes the weak convergence of $u^N$ with $u^N(0,\cdot)=\mu$. In par
	ticular, the initial data is random. 
	The large-scale behavior of $u^N$ when started by some fixed initial data $u_0$ is more subtle and out of the scope of this paper. Instead, by adapting
	the methods used here, one can show a "semi-quenched" convergence. Specifically, we denote by $\bE_{u_0}^N$ and $\bE_{u_0}$ the expectations with respect to the laws of $u^N$ and $u$ starting from the fixed initial value $u_0$, and denote by $P^N_t$ and $P_t$ their
	Markov semigroups respectively. 
	Let $\mu$ be the law of the divergence-free and mean-free spatial white noise.
	Then, for every $p,k\geq1$, every $F_1\cdots,F_k\in L^p(\mu)$ and
	$0 \leq t_1 \leq\cdots\leq t_k$, we could obtain the following by the resolvent convergence,
	\begin{align}
		\sup_{t\geq0}\|P_t^NF_1-P_tF_1\|_{L^p(\mu)}&\to0,\label{1.6}\\
		\bE_{u_0}^N[F_1(u^N(t_1))\cdots F_k(u^N(t_k))]&\to \bE_{u_0}[F_1(u(t_1))\cdots F_k(u(t_k))],\label{1.7}
	\end{align}
	as $N\to\infty$, where the convergence is in $L^p(\mu)$.
	This type of convergence has appeared recently in \cite{CMT24} for the critical case, which is not
	quenched convergence (i.e. convergence a.s. with respect to the initial data), but stronger than the annealed convergence of finite-dimensional
	distributions (i.e. convergence with $u_0\sim\mu$).
	
	We briefly sketch the proof of \eqref{1.6} and \eqref{1.7}. By the Trotter-Kato theorem \cite[Theorem 4.2]{IK02} and \cite[Theorem 3,53]{Gra25}, it suffices to prove a generalization version of Proposition \ref{prop:limite}, where $\tilde\sigma_{\bj,\bt}$ (resp. $\tilde\sigma_{-\bj',\bt'}$) is replaced with $\sigma_{j_1,t_1}\otimes\cdots\otimes\sigma_{j_r,t_r}$ (resp. $\sigma_{-j'_1,t'_1}\otimes\cdots\otimes\sigma_{-j'_r,t'_r}$) for every $r\geq1$. 
	This can be achieved by extending Lemma \ref{lem5.8} and Lemma \ref{lem5.9} to general $r\geq1$, and the proofs proceed in the same manner.
	
\end{remark}

\begin{remark}\label{rmk}
	In the case $d\geq3$, by the same arguments as in Proposition \ref{prop:A} in the two-dimensional case, we can derive a similar "Replacement Lemma": for any $c_1>0,\, n\in\NN$ and the symmetric divergence-free test functions $\psi_1,\psi_2$ in $L^2(\TT^{dn},\RR^{dn})$,
	\begin{align}\label{4.1}
		\left|\langle(-\fL_0+c_1(-\fL_0))^{-1}\fA^N_+\psi_1,\fA^N_+\psi_2\rangle-\frac{c\lambda^2}{1+c_1}\langle(-\fL_0)\psi_1,\psi_2\rangle\right|
		\lesssim&\|\fN(-\fL_0)^{\frac12}\psi_1\|\|\fN(-\fL_0)^{\frac12}\psi_2\|,
	\end{align}
	where $c=\lambda^{-2}\lim_{N\to\infty}\|(-\fL_0)^{-\frac12}\fA_+^N(-\fL_0)^{-\frac12}\sigma_{k,1}\|^2,$ $\sigma_{k,1}$ is given by \eqref{vector-fields},
	$\fL^N = \fL_0 +\fA_+^N +\fA_-^N$ is the generator of $u^N$, and $\fN$ denotes the number operator (see Theorem \ref{thm.Geneator_in_Fourier} and Definition \ref{def:num}).
	Let $D_{\mathrm{rep}}$ be the unique positive solution to $x = \frac{c\lambda^2}{1+x}$. Although there is no vanishing term on the right hand side of \eqref{4.1}, 
	one might be tempted to regard the symmetric operator $D_{\mathrm{rep}}(-\fL_0)$ as a "Replacement operator" for approximating the fixed-point 
	$\fH^N = \fA^N_-(-\fL_0+\fH^N)^{-1}\fA_+^N$, and conjecture that $D=D_{\mathrm{rep}}$ as in the two dimensional case. The following corollary shows that such conjecture is in fact incorrect.
\end{remark}
\begin{cor}\label{cor}
	For $d=3$, the positive constant $D$ in Theorem \ref{thm:main} does not coincide with either $\nu_{\mathrm{eff}}-1$ proposed in \cite[Conjecture~6.5]{JP24} or $D_{\mathrm{rep}}$ defined in Remark \ref{rmk}.
\end{cor}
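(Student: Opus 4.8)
The plan is to prove Corollary~\ref{cor} by explicitly computing the three candidate constants—$D$, $\nu_{\mathrm{eff}}-1$, and $D_{\mathrm{rep}}$—up to a common order in $\lambda$ and exhibiting a discrepancy at the first nontrivial order. Since all three vanish as $\lambda\to0$ and are (at least formally) power series in $\lambda^2$, the cleanest strategy is to compare their coefficients $f_l$ in the expansion $D=\sum_l f_l\lambda^{2l}+R_m$. First I would read off the leading coefficient $f_1$ from the asymptotic expansion established in Theorem~\ref{thm:main} for $d=3$; because the error satisfies $|R_m|\leq (m+1)!\,C^{m+1}\lambda^{2m+2}$, the coefficients $f_1,f_2$ are genuinely determined by $D$ and can be extracted by specializing to small $\lambda$. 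The point is that $D$ is built from the fixed-point operator $\fH^N=\fA^N_-(-\fL_0+\fH^N)^{-1}\fA^N_+$ via the case-by-case Leray-projection analysis described in the abstract, and iterating this fixed point in powers of $\lambda$ (using that each $\fA^N_\pm$ carries one factor of $\lambda$) yields the Born-type series whose coefficients are the $f_l$.

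Next I would compute $D_{\mathrm{rep}}$ to the same order. By definition $D_{\mathrm{rep}}$ solves $x=\frac{c\lambda^2}{1+x}$, so expanding gives $D_{\mathrm{rep}}=c\lambda^2 - c^2\lambda^4 + O(\lambda^6)$, where $c=\lambda^{-2}\lim_{N\to\infty}\|(-\fL_0)^{-\frac12}\fA^N_+(-\fL_0)^{-\frac12}\sigma_{k,1}\|^2$ as in Remark~\ref{rmk}. The leading coefficients of $D$ and $D_{\mathrm{rep}}$ should agree at order $\lambda^2$: both first-order terms come from the single diagram $\fA^N_-(-\fL_0)^{-1}\fA^N_+$ contracted against the one-particle mode $\sigma_{k,1}$, so $f_1=c$. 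The decisive comparison is therefore at order $\lambda^4$. The Replacement-Lemma ansatz \eqref{4.1} effectively replaces the full operator $\fH^N$ by the scalar multiple $D_{\mathrm{rep}}(-\fL_0)$ \emph{before} taking the second contraction, whereas the true second-order coefficient $f_2$ of $D$ retains the full operator structure of $(-\fL_0)^{-1}\fA^N_+$ between the two $\fA^N_\pm$ factors and the nontrivial Leray-projection/rotational-coordinate bookkeeping. I would show that this operator is not proportional to $(-\fL_0)^{-1}$ on the relevant subspace, so $f_2\neq -c^2$, giving $D\neq D_{\mathrm{rep}}$.

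For the comparison with $\nu_{\mathrm{eff}}-1$ from \cite[Conjecture~6.5]{JP24}, I would simply quote the explicit value (or explicit low-order expansion) of $\nu_{\mathrm{eff}}$ predicted there, compute its $\lambda^2$- and $\lambda^4$-coefficients, and check them against $f_1,f_2$. The abstract already announces that the conjectured coefficient is incorrect, so the expectation is a mismatch—most plausibly already at the leading order $\lambda^2$ if the conjecture got a numerical prefactor wrong, or at $\lambda^4$ if the structure is right but the higher corrections differ. In either case the inequality $D\neq\nu_{\mathrm{eff}}-1$ follows once the first differing coefficient is identified, because by the quantitative bounds $|f_l|\leq l!C^l$ and $|R_m|\leq(m+1)!C^{m+1}\lambda^{2m+2}$ the remainder cannot conspire to restore equality for small $\lambda$.

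The main obstacle will be the order-$\lambda^4$ computation of $f_2$: one must carefully track how the Leray projection acts on the two-particle intermediate states and how the rotational change of coordinates decouples the system, so as to evaluate the trace/norm defining the second-order diagram and confirm that it differs from the scalar replacement $-c^2$. This requires the case-by-case analysis flagged in the abstract and is the only genuinely non-routine step; the expansions of $D_{\mathrm{rep}}$ and of $\nu_{\mathrm{eff}}$ are elementary once the defining relation and the conjectured formula are in hand.
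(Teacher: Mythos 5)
Your overall skeleton is the same as the paper's: the paper also derives the expansion $D=\sum_l f_l\lambda^{2l}+R_m$ (see \eqref{**} and \eqref{eq:expansion}, obtained via iterated variational formulas rather than a Born iteration of $\fH^N$, though the effect is the same), notes $f_1=c=\lim_N\|T^{N,+,*}\sigma_{k,1}\|^2$, expands $D_{\mathrm{rep}}=c\lambda^2-c^2\lambda^4+O(\lambda^6)$, and locates the decisive test at order $\lambda^4$, i.e.\ whether $\lim_N\|T^{N,+,*}T^{N,+,*}\sigma_{k,1}\|^2$ equals $\lim_N\|T^{N,+,*}\sigma_{k,1}\|^4$ (here $T^{N,*}_{2,n}T^{N,+,*}\sigma_{k,1}=T^{N,+,*}T^{N,+,*}\sigma_{k,1}$ since $T^{N,+,*}\sigma_{k,1}\in\FK_2$). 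For the comparison with \cite[Conjecture~6.5]{JP24} the paper is slightly sharper than your leading-order matching: it uses that $R_1\le 0$ (positivity of the symmetrized resolvent) to get the genuine upper bound $D\le\lim_N\|T^{N,+}\sigma_{k,1}\|^2$, evaluates this in \eqref{5.7}--\eqref{5.6} as $\frac{7\lambda^2}{30\pi}$, and checks it lies strictly below $\sqrt{1+\lambda^2/\pi}-1$ for all $\lambda\le4$, not only asymptotically as $\lambda\to0$.

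The genuine gap is in your mechanism for concluding $f_2\neq-c^2$. You propose to ``show that this operator is not proportional to $(-\fL_0)^{-1}$ on the relevant subspace'' — but $f_2$ and $-c^2$ are single scalars obtained after contracting against $\sigma_{k,1}$ and taking $N\to\infty$, so non-proportionality of the intermediate operator does not preclude accidental equality of the two numbers. The two-dimensional case is a concrete counterexample to your heuristic: there the operator $T^{N,-}T^{N,+}$ is likewise not a function of $\fL_0$, yet by \eqref{rep} one has $\lim_N\|T^{N,+,*}T^{N,+,*}\sigma_{k,1}\|^2=\frac12\lim_N\|T^{N,+,*}\sigma_{k,1}\|^4$ and the Taylor expansions \eqref{series} and \eqref{series1} of $D$ and $D_{\mathrm{rep}}$ \emph{do} coincide, so your structural argument would wrongly separate them in $d=2$ as well. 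The only way to close the argument — and what the paper actually does — is to evaluate both scalars explicitly in $d=3$: the spherical-coordinate computation \eqref{5.6} gives $c=\frac{7}{30\pi}$, and the two-fold contraction reduces to the explicit integral \eqref{5.8}, whose (numerical, Mathematica-assisted) value $\approx\frac{8.588}{2(2\pi)^4}\lambda^4$ is strictly smaller than $\left(\frac{7}{30\pi}\right)^2\lambda^4$. Since you defer exactly these evaluations (and the analogous $\lambda^2$-coefficient $\frac{1}{2\pi}$ of the conjectured $\nu_{\mathrm{eff}}-1$), your proposal as written identifies the right comparison but does not yet prove either inequality.
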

\begin{remark}
	For $d>3$, we believe that the same conclusion as in Corollary~\ref{cor} should also hold, which can be verified by following the proof in Section~\ref{sec5} and evaluating the integrals in \eqref{5.7} and \eqref{5.8}.
\end{remark}

The proof strategy of Theorem \ref{thm:main} follows the framework proposed in \cite{CGT24}. The key step is the derivation of a Fluctuation-Dissipation relation (Theorem \ref{thm:F-D}), which approximates the non-linearity term $-\lambda_N\int_0^tB^N(u_s^N)(\varphi)\ud s$ in \eqref{3.1} by a drift $\int_0^tD\fL_0^Nu_s^N(\varphi)\ud s$, capturing the additional diffusivity, along with a Dynkin martingale that represents the extra noise.
By truncating the generator $\fL^N$ of $u^N$, the martingale term can be obtained by solving the truncated resolvent equation $-\fL^N_{\geq 2}v^N = \fA^N_+\varphi\,.$
For the critical case $d=2$, we can approximate the resolvent solution by a sequence of diagonal symmetric operators through a Replacement Lemma. This approximation approach originates from \cite{CET23b}, and was simplified in \cite{CG24,CGT24}. In the supercritical case $d\geq3,$ we follow the routine in \cite{CGT24} to analyze $v^N.$ Compared with the stochastic Burgers equation, the solution $u^N$ to \eqref{eq:scaled-NS} is vector-valued and divergence-free. 
{The presence of the Leray projection in the nonlinearity makes all vector components of $u^N$ be coupled, which destroys the componentwise structure one has in the scalar case.
To address this issue, we
perform a case-by-case analysis depending on the action of the Leray projection, and use the anti-symmetric part of the generator, along with a rotational change of coordinates in the integral, to derive the desired decoupled limiting stochastic heat equation from the original coupled system \eqref{eq:scaled-NS}.}
We also give an asymptotic expansion for $D$ in $d\geq3$ by induction argument and variational formula.
In this case, both the diagonal and off terms of $\langle(-\fL_0)^{-1}\fA_+^N\psi,\fA_+^N\psi\rangle$ for $\psi\in\FK$ could produce extra terms as $N\to\infty$, which cannot be approximated by a replacement diagonal operator.

{The rest of this paper is organized as follows. In Section \ref{sec2}, we recall some well-known results on Wiener chaos decomposition and give the basic properties of \eqref{eq:truncated-SNS-intro} and preliminary
estimates. In section \ref{sec3} we
reduce the main statement to the proof of the so-called Fluctuation-Dissipation
Theorem.
The bulk of the paper is Section \ref{sec4}, in which we prove Theorem \ref{thm:main} in $d\geq3$. 
Necessary results and proofs analogous to those in \cite{CGT24} are included in Appendix \ref{appendixB}.
In section \ref{sec5}, we establish an asymptotic expansion for $D$ and prove Corollary \ref{cor}.
Finally, the case $d=2$ for Theorem \ref{thm:main} is addressed in Section \ref{sec6}, with further technical estimates given in Appendix \ref{appenA1}.}

\section{Notations and preliminaries}\label{sec2}
Throughout the paper, we employ the notation $a\lesssim b$ if there exists a constant $c>0$
such that $a \leq cb$, and $\lesssim_d$ means the constant $c = c(d)$ depending on $d$.
{For $\varphi\in\fS(\TT^d,\RR^d)$,
we denote its $l$-th component by $\varphi_l$, for $l=1,\cdots,d.$
For $k\in \ZZ^d$ and $x\in\TT^d$, set $e_k(x) = e^{2\pi\iota k\cdot x}$.
The Fourier transform of $\varphi$ at $k\in\ZZ^d$ is denoted by $\fF(\varphi)(k)$ or $\hat\varphi(k)$, whose $l$-th component is defined by
\[
\fF(\varphi)(l,k) = \hat{\varphi}(l,k) := \int_{\TT^d}\varphi_l(x)e_{-k}(x)\ud x \,.
\]
For $\eta\in\fS'(\TT^d,\RR^d)$, we denote
its Fourier transform by $\hat\eta(k):=\eta(e_{-k})$.}
We introduce the Leray projection $\Lp$, which maps $\varphi \in \fS(\TT^d,\RR^d)$ to a divergence free vector. Denote the Leray projection matrix as $\hat\Lp(k)=\mathbf{I}-\frac{1}{|k|^2}k\otimes k$ {for every $k\in\ZZ_0^d:=\ZZ^d\backslash\{0\}.$} Then
\begin{align}\label{2.1}
	\widehat{\Lp\varphi}(k) = \hat{\Lp}(k)\hat{\varphi}(k)\,.
\end{align}
{Let $\HH$ and $\HH_{\mathbb C}$ be the mean-zero and divergence-free subspace of $L^2(\TT^d,\RR^d)$ and $L^2(\TT^d,\mathbb C^d)$ respectively.
For $\varphi\in \HH^{\otimes n}$, we write $\varphi(l_{1:n},x_{1:n})$ to denote the $(l_1,\cdots,l_n)$-component of $\varphi$ evaluated at the spatial points $x_1,\cdots,x_n\in\TT^d,$ where each $l_i\in\{1,\cdots,d\}$
specifies the component index of the $i$-th factor.}
{For $n\in\NN$, set $L^2_n:=\HH^{\otimes n}$ with the inner product 
	\begin{align}\label{L^2_n}
		\langle f,g\rangle_{L^2_n}:=\langle f,g\rangle_{L^2(\TT^d,\RR^d)^{\otimes n}}\,,\quad f,g\in L^2_n.
	\end{align}}

\subsection{The divergence-free vector field space}\label{sec:2.1}
We recall the following basis of $\HH_{\mathbb C}$. Let $\ZZ^d_0= \ZZ^d_+ \cup \ZZ^d_-$ be a partition of $\ZZ^d_0$ such that
$\ZZ^d_+ \cap \ZZ^d_-= \emptyset$ and $\ZZ^d_+ = - \ZZ^d_-.$
For any $k\in \ZZ^d_+$, let $\{a_{k,1},\cdots, a_{k,d-1}\}$ be an orthonormal basis of $k^\perp := \{x\in \RR^d: k\cdot x=0\}$ such that $\{a_{k,1}, \cdots, a_{k,d-1}, \frac{k}{|k|}\}$ is right-handed. 
For $\alpha=1,\dots,d-1$, we assume $a_{k,\alpha}=a_{|k|^{-1}k,\alpha}$ for $k\in \ZZ^d_+$, and define $a_{k,\alpha} = a_{-k,\alpha}$ for $k\in \ZZ^d_-$.
For every $k\in \ZZ^d_0$ and $\alpha=1,\dots,d-1$, we define the divergence free vector field
    \begin{equation}\label{vector-fields}
    	\sigma_{k,\alpha}(x) := a_{k,\alpha} e_k(x)\,, \quad x\in \TT^d \,.
    \end{equation}
    Then $\{\sigma_{k,1},\cdots,\sigma_{k,d-1}:  k\in \ZZ^d_0\}$ is a CONS of $\HH_{\mathbb C}$. 
    We denote $a_{k,\alpha}^l$ by the $l-$th component of the vector $a_{k,\alpha}.$
    For $\varphi \in \HH$, we have
    \begin{align}
    	\varphi(x) = \sum_{\alpha=1}^{d-1}\sum_{k\in\ZZ_0^d}\varphi_{k,\alpha}\sigma_{k,\alpha}(x)\,,\label{eq:div}
    \end{align}
    where 
    \begin{align}\label{eq:div-free-basis}
    	\varphi_{k,\alpha}=\langle\varphi,\sigma_{k,\alpha}\rangle_{L^2(\TT^d,{\mathbb{C}}^d)} =\sum_{l=1}^{d} a_{k,\alpha}^l\hat\varphi(l,k)\,.
    \end{align}

    \subsection{Wiener space analysis} \label{sec:2.2}
    Let $(\Omega,\fF,\PP)$ be a complete probability space and $\eta$ be a mean-zero and divergence-free spatial white noise, i.e. $\eta$ is a centered Gaussian field with covariance
    \[
        \EE[\eta(f_1)\eta(f_2)] = \left\langle f_1,f_2\right\rangle_{L^2(\TT^d,\RR^d)}\,,\quad f_1,f_2 \in \HH\,.
    \]
    We denote the distribution of $\eta$ as $\mu$. Since $\eta$ is an isonormal Gaussian process, by \cite[Theorem 1.1.1]{Nua06}, the space $L^2(\mu)$
    admits an orthogonal decomposition in terms of the homogeneous Wiener chaoses. The $n-$th Wiener chaos is defined via 
       \begin{align*}
    	\mathscr{H}_{n} :=& \overline{\text{span}\{H_n(\eta(h)): h \in \HH, \,\|h\|_{L^2(\TT^d,\RR^d)} = 1\}}\,,
    \end{align*}
    where $H_n$ are the Hermite polynomials.
    There exists an isomorphism $I$ between the Fock space $\FK=\overline{\oplus_{n=0}^\infty\FK_n}$ and $L^2(\mu)$,
    where $\FK_n$ is the symmetric subspace of $\HH^{\otimes n}$, i.e. for $f\in\FK_n$ and any permutation $\sigma$, $f(l_{1:n},x_{1:n})=f(l_{\sigma(1:n)},x_{\sigma(1:n)})$.
    	{The restriction $I_n$ of $I$ to $\FK_n$
    	is itself an isomorphism from $\FK_n$ to $\fH_n$,} and by \cite[Therorem 1.1.2]{Nua06},
    for any $F \in L^2(\mu)$, there exists a family of kernels $(f_n)_n\in\FK$ such that 
    $F = \sum_{n=0}^\infty I_n(f_n),$ and 
    $$\EE[F^2]=\sum_{n=0}^\infty n!\|f_n\|_{L^2(\TT^d,\RR^d)^{\otimes n}}^2.$$ 
    We take the right hand side as the definition of the scalar product on $\FK$, i.e.
    \begin{align}\label{FK}
    	\langle f,g\rangle=\sum_{n=0}^\infty \langle f_n,g_n\rangle:=\sum_{n=0}^\infty n!\langle f_n,g_n\rangle_{L^2(\TT^d,\RR^d)^{\otimes n}},\quad f,g\in\FK\,.
    \end{align}

\subsection{The truncated equation and its generator}
In this section, we recall the basic properties of the solution $u^N$ to \eqref{eq:scaled-NS} from \cite{JP24} and derive its generator on the Fock space $\FK$. 

First we write \eqref{eq:scaled-NS} in the weak fomulation. 
For $\varphi \in \fS(\TT^d,\RR^d)\cap\HH$ and $t\geq0,$
\begin{equation}\label{3.1}
     u^{N}_t(\varphi) = u_0^{N}(\varphi) + \int_0^t u^{N}_s(\Delta\varphi)ds - \lambda_N\int_0^tB^{N}(u^{N}_s)(\varphi)ds + M_t^{\varphi},
\end{equation}
where $M_t^\varphi = \sqrt{2}\xi(\1_{[0,t]}\otimes (-\Delta)^{\frac12}\varphi)$ is a continuous martingale with quadratic variation 
$\langle M^\varphi\rangle_t = 2t\|(-\Delta)^{\frac12}\Lp\varphi\|_{L^2(\TT^d,\RR^d)}^2 \,,$ and 
\begin{align*}
	B^N(u)(\varphi) =& \rho^N*\Lp \operatorname{div}((\rho^N*u)\otimes (\rho^N*u))(\varphi)\\
	=&\iota2\pi\sum_{k_{1:2}\in\ZZ_0^d}\mathcal{R}_{k_1,k_2}^N((k_1+k_2)\cdot\hat{u}(k_1))(\hat{\varphi}(-k_1-k_2)\cdot\hat{u}(k_2))\,,
\end{align*}
where $\mathcal{R}_{k_1,k_2}^N:=\hat{\rho}^N(k_1)\hat{\rho}^N(k_2)\hat{\rho}^N(k_1+k_2)$. 
For the sake of technical simplification (especially to prove Lemma \ref{lem5.9}), we keep the assumption from \cite{CGT24} that
for $d\geq3,$ $N\in \NN+\frac12,$ and the norm in $\fR^N$ is $|\cdot|_\infty$, while for $d=2$ is Euclidean norm $|\cdot|$. 
By \cite[Theorem 3.4]{JP24}, there exists a unique solution $u^N$ to \eqref{3.1} for any divergence-free initial condition, and it is a strong Markov process with the invariant measure $\mu$ defined in Subsection \ref{sec:2.2}.
{Denote the distribution of $u^N$ by $\bP$ (with corresponding expectation $\bE$)} and its genertor by $\fL^N= \fL_0 + \fA^N$.
In the following, we write the generator $\fL^N$ on Fock space $\FK$ explicitly. 
By the isometry,
we only need to determine $\fF(\fL^N\varphi)(l_{1:n},k_{1:n})$ for $\varphi\in\FK_n$. Note that it must be symmetric, i.e. for all permutation $\sigma$, $\fF(\fL^N\varphi)(l_{
\sigma(1:n)},k_{\sigma(1:n)})=\fF(\fL^N\varphi)(l_{1:n},k_{1:n})$.
\begin{theorem}\label{thm.Geneator_in_Fourier}
For $N\in\NN$, let $\fL^N = \fL_0+\fA^N$. Then $\fA^N = \fA^N_+ + \fA^N_-$ with $\fA^N_-=-(\fA^N_+)^{*}$\,. $\fL_0(\FK_0) = \fA_+^N(\FK_0) = \fA_-^N(\FK_1) = 0$.
For $n\ge1$, the action of the operators $\fL_0,\,\fA_+^N$ and $\fA_-^N$ on $\varphi_n\in \FK_n$ is given by
\begin{align}
	\fF(\fL_0\varphi_n)(l_{1:n},k_{1:n}) =& -(2\pi)^{2}\sum_{i=1}^n|k_i|^{2}\hat{\varphi}_n(l_{1:n},k_{1:n}),\label{3.9}\\
	\fF(\fA_+^{N}\varphi_n)(l_{1:n+1},k_{1:n+1}) =&\frac{\lambda_N 2\pi\iota}{n+1}\sum_{1\leq i,j\leq n+1,i\neq j}\fR_{k_{i},k_j}^N\left(\hat\Lp(k_i)(k_i+k_{j})\right)(l_i)\times\nonumber\\
	&\times\left(\hat\Lp(k_{j}) \hat{\varphi}_{n}((\cdot,k_i+k_{j}),(l_{1:n+1\backslash i,j},k_{1:n+1\backslash i,j}))\right)(l_j),\label{3.10}\\
	\fF(\fA_-^{N}\varphi_n)(l_{1:n-1},k_{1:n-1}) =&\lambda_N 2\pi\iota n\sum_{j=1}^{n-1}\sum_{p+q=k_j}\fR_{p,q}^N\sum_{i,t=1}^dk_j^i{\hat{\Lp}_{l_j,t}(k_j)}\hat{\varphi}_{n}((t,p),(i,q),(l_{1:n-1\backslash j},k_{1:n-1\backslash j})),\label{3.11}
\end{align}
where $k_j^i$ is the $i$-th component of $k_j$.
Moreover, for $i=1,\dots,d,$ the momentum operator $M_i$, defined for $f\in\FK_n$ as $\fF(M_if)(l_{1:n},k_{1:n})=(\sum_{j=1}^nk_j^i)\hat{f}(l_{1:n},k_{1:n}),$ commutes with $\fL_0,\fA^N_{+},\fA^N_{-}$, i.e.
\begin{align}\label{eq:commute}
\fL^NM_i-M_i\fL^N=0 \,.
\end{align}
\end{theorem}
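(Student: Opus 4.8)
The plan is to realise $\fL^N$ as the Markov generator of the stationary diffusion \eqref{3.1} and to read off its chaos-by-chaos action through the Wiener isomorphism $I$. Applying Itô's formula to $F(u_t^N)$ for a chaos element $F=I_n(\varphi_n)$ and collecting the drift of $\mathrm{d}F(u_t^N)$ from \eqref{3.1}, one obtains $\fL^N=\fL_0+\fA^N$, where $\fL_0$ gathers the linear drift $u\mapsto\Delta u$ together with the second-order term coming from the martingale covariance $\langle M^\varphi\rangle$, and $\fA^N$ is the first-order operator $\fA^N F(u)=-\lambda_N\langle B^N(u),\mathrm{D}F(u)\rangle$ generated by the nonlinear drift. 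Since white noise $\mu$ is invariant for the linearised equation $\partial_t u=\Delta u+\sqrt2(-\Delta)^{1/2}\Lp\xi$, the operator $\fL_0$ is the associated Ornstein–Uhlenbeck generator: it is $\mu$-self-adjoint and acts diagonally, because $\Delta e_k=-(2\pi)^2|k|^2 e_k$ produces on $\FK_n$ the multiplier $-(2\pi)^2\sum_i|k_i|^2$, which is exactly \eqref{3.9}. In particular $\fL_0$ kills constants, so $\fL_0(\FK_0)=0$.

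For the nonlinear part, the decisive structural fact is that $B^N$ is quadratic in $u$, so $\fA^N$ shifts the chaos degree by $\pm1$ and splits as $\fA^N=\fA_+^N+\fA_-^N$ with $\fA_+^N\colon\FK_n\to\FK_{n+1}$ the creation part and $\fA_-^N\colon\FK_n\to\FK_{n-1}$ the annihilation part. To identify $\fA_-^N=-(\fA_+^N)^*$ I would argue that $\fA^N$ is $\mu$-antisymmetric: the $\mu$-adjoint of the first-order operator $\fA^N$ is governed by the state-space divergence of $B^N$ and the energy pairing $\langle B^N(u),u\rangle$, both of which vanish by the divergence-free structure of the Navier–Stokes nonlinearity (equivalently, $B^N$ preserves the Gaussian measure $\mu$, consistent with the invariance established in \cite[Theorem 3.4]{JP24}, while $\fL_0$ is $\mu$-symmetric). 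Since the $\mu$-adjoint interchanges raising and lowering operators, antisymmetry of $\fA_+^N+\fA_-^N$ is precisely the relation $\fA_-^N=-(\fA_+^N)^*$, so it suffices to compute $\fA_+^N$ and obtain $\fA_-^N$ as its negative adjoint.

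The core computation is then explicit in Fourier modes. Inserting the Fourier expansion of $B^N(u)$ and expanding the product of the quadratic $B^N(u)$ with the derivative of $I_n(\varphi_n)$ into homogeneous chaos via the Wiener product formula, the uncontracted term yields the degree-$(n+1)$ kernel \eqref{3.10} while the single contraction yields the degree-$(n-1)$ kernel \eqref{3.11}. The combinatorial prefactors are forced by the symmetrisation built into $\FK_n\cong\fH_n$: the symmetric sum $\frac{1}{n+1}\sum_{i\neq j}$ in \eqref{3.10} and the factor $n$ in \eqref{3.11}. The Leray matrices $\hat\Lp(k)$ enter through the projection back onto $\HH$ after forming $\operatorname{div}(u\otimes u)$; they are invisible in the scalar identity for $B^N(u)(\varphi)$ only because $\Lp\varphi=\varphi$ on divergence-free test functions. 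The remaining kernel relations are degenerate cases of these formulas: \eqref{3.10} with $n=0$ has empty index set $\{i\neq j\colon 1\le i,j\le1\}$, whence $\fA_+^N(\FK_0)=0$, and \eqref{3.11} with $n=1$ has the empty sum $\sum_{j=1}^{0}$, whence $\fA_-^N(\FK_1)=0$.

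Finally, the commutation \eqref{eq:commute} encodes translation invariance on $\TT^d$: every elementary term in \eqref{3.9}–\eqref{3.11} conserves the total momentum $\sum_j k_j$, since $\fL_0$ is diagonal and the convolution constraint $k_i+k_j=p+q$ preserves the summed wavevector through creation and annihilation. Comparing $\fF(M_i\fL^N f)$ and $\fF(\fL^N M_i f)$ mode by mode then gives $[\fL^N,M_i]=0$. The step I expect to be the main obstacle is the bookkeeping in the third paragraph: correctly tracking the index permutations, the two Leray projections, and the symmetrisation so that the creation and annihilation kernels emerge precisely as \eqref{3.10}–\eqref{3.11}. Once the antisymmetry $\fA_-^N=-(\fA_+^N)^*$ is secured, the remaining work is conceptually routine.
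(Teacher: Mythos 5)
Your plan follows the paper's own route essentially step for step: Itô's formula identifies $\fL_0$ as the diagonal Ornstein--Uhlenbeck generator and $\fA^N$ as the first-order operator $-\lambda_N\langle B^N(u),\mathrm{D}F(u)\rangle$, the kernels \eqref{3.10}--\eqref{3.11} are extracted via the Wiener chaos product formula with the Leray projections tracked through the contractions, the relation $\fA_-^N=-(\fA_+^N)^*$ comes from the $\mu$-antisymmetry of $\fA^N$ (as in \cite{CES21}), and \eqref{eq:commute} is verified mode by mode using momentum conservation. The one point you gloss over is that pairing the chaos-2 field $B^N(u)$ with the degree-$(n-1)$ Malliavin derivative a priori also produces a doubly-contracted chaos-$(n-3)$ term, so your ``decisive structural fact'' that $\fA^N$ shifts the degree by exactly $\pm1$ is not automatic; the paper kills this term via the identity $B^N(h)(h)=0$ --- the same divergence-free cancellation you invoke for antisymmetry --- and your write-up should deploy it there as well.
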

    
  \begin{remark}
     {Theorem \ref{thm.Geneator_in_Fourier} differs from (3.10) and (3.11) in \cite{JP24}. Since the functions in the Fock space are divergence-free, $\rho^N_{i,y}$ in \cite[(3.13)]{JP24} should be $\Lp\rho^N_{i,y}$. We give a new proof of Theorem \ref{thm.Geneator_in_Fourier} in Appendix \ref{appenA}. For the operator $\fA^N_+$, the Leray projection matrix associated with the single momentum $k_j$ acts on the first component of $\hat\varphi_n$, which is different from the case of stochastic Burgers equation in \cite{CGT24}.
    	This modification requires more technical calculation in the proof of the Replacement Lemma for $d=2$ and proof of Proposition \ref{prop:limite} for $d\geq3.$  }
    \end{remark}
{In this case, we can also prove the so-called graded sector condition for the operator $\fA^N$ by a similar argument as \cite{CGT24}. For the completeness of the paper, we put its proof in Appendix \ref{appenA}.} Before stating it, we introduce the so-called number operator.
\begin{definition}\label{def:num}
	The number operator $\fN:\FK\to\FK$ is defined by $\fN\psi=n\psi$ for $\psi\in\FK_n$.
\end{definition}

\begin{lem}[Graded Sector Condition]\label{lem:sector}
        For $d\geq2$ and $\varphi\in\FK$, one has
        \begin{equation}\label{eq:sector}
            \|(- \fL_0)^{-\frac{1}{2}}\fA_{\sigma}^{N}\varphi\|^2
            \lesssim_d\lambda^2 \|\sqrt{\fN}(-\fL_0)^{\frac{1}{2}}\varphi\|^2,
        \end{equation}
for $\sigma\in\{+,-\}.$ Furthermore, for any smooth function $\varphi$ in $\FK_2$, it holds that
\begin{align}\label{eq:A-}
	\lim_{N\to\infty}\|(- \fL_0)^{-\frac{1}{2}}\fA_{-}^{N}\varphi\|^2=0\,.
\end{align}
    \end{lem}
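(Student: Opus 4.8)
My plan is to work throughout in the explicit Fock-space representation of Theorem \ref{thm.Geneator_in_Fourier}, proving the estimate first for the creation part $\fA_+^N$ and then transferring it to $\fA_-^N$ by duality. Since $\fA_+^N$ raises the chaos degree by exactly one, the images $\fA_+^N\varphi_n$ lie in mutually orthogonal spaces $\FK_{n+1}$, so that $\|(-\fL_0)^{-\frac12}\fA_+^N\varphi\|^2=\sum_{n}\|(-\fL_0)^{-\frac12}\fA_+^N\varphi_n\|^2$, and by \eqref{FK} it suffices to establish the per-level bound
\begin{align*}
\|(-\fL_0)^{-\tfrac12}\fA_+^N\varphi_n\|^2\lesssim_d n\,\lambda^2\,\|(-\fL_0)^{\tfrac12}\varphi_n\|^2,
\end{align*}
which, summed over $n$ with the number-operator weight, reproduces the right-hand side of \eqref{eq:sector}.

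For this per-level estimate I would start from \eqref{3.10}, bound the two Leray factors by $\|\hat\Lp(k)\|\le1$ so that $|(\hat\Lp(k_i)(k_i+k_j))(l_i)|\le|k_i+k_j|$ and the remaining factor is controlled by $|\hat\varphi_n|$, and then compute $\|(-\fL_0)^{-\frac12}\fA_+^N\varphi_n\|^2$ by expanding the square of the symmetric sum over the ordered pairs $(i,j)$. The diagonal contribution (same pair in both factors) is the main term: after setting $p=k_i$, $q=k_j$, $r=p+q$ and summing over the split $p$ with $q=r-p$ (the slot $r=k_i+k_j$ being the argument fed into $\hat\varphi_n$), it reduces, using $|p|^2+|q|^2+\cdots\ge|p|^2$ and the support constraint $\fR^N_{p,q}\neq0\Rightarrow 1\le|p|\le N$, to the scalar sum
\begin{align*}
\lambda_N^2\sum_{1\le|p|\le N}\frac{1}{|p|^2}\,.
\end{align*}
This is the arithmetic heart of the statement and explains the choice of scaling \eqref{lambda}: one has $\sum_{1\le|p|\le N}|p|^{-2}\lesssim_d N^{d-2}$ for $d\ge3$ and $\lesssim\log N$ for $d=2$, which cancels $\lambda_N^2=\lambda^2N^{2-d}$ respectively $\lambda_N^2=\lambda^2/\log N$, leaving a bound $\lesssim_d\lambda^2$ uniform in $N$. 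The leftover sum reassembles $\sum_{m}|k_m|^2\,|\hat\varphi_n|^2$, that is $\|(-\fL_0)^{\frac12}\varphi_n\|^2$ up to $n!$ and $2\pi$ factors, while the combinatorial bookkeeping produces exactly the factor $n$; this gives the per-level bound for the diagonal part.

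I expect the main obstacle to be showing that the off-diagonal cross terms in the expansion of the square (distinct split pairs $(i,j)\ne(i',j')$) do not exceed the diagonal order $n$: a naive triangle inequality or Cauchy–Schwarz over all $n(n+1)$ pairs is too lossy, overshooting by two powers of $n$, so one must estimate the cross terms by a Schur-type argument exploiting the shared momenta between the two pairs, exactly as in \cite{CGT24}. Once the bound for $\fA_+^N$ is in hand, the bound for $\fA_-^N=-(\fA_+^N)^*$ follows by duality: for every $\psi$, writing $\chi=(-\fL_0)^{-\frac12}\psi$ and using that $\fN$ and $\fL_0$ are diagonal, hence commute,
\begin{align*}
|\langle(-\fL_0)^{-\tfrac12}\fA_-^N\varphi,\psi\rangle|
=|\langle\varphi,\fA_+^N\chi\rangle|
=|\langle\sqrt{\fN}(-\fL_0)^{\tfrac12}\varphi,\ \fN^{-\tfrac12}(-\fL_0)^{-\tfrac12}\fA_+^N\chi\rangle|,
\end{align*}
and Cauchy–Schwarz together with the $\fA_+^N$ bound applied on each level—where the weight $(n+1)^{-1}$ produced by $\fN^{-\frac12}$ on $\FK_{n+1}$ absorbs the factor $n$ from the per-level estimate—gives $\le\lambda\,\|\sqrt{\fN}(-\fL_0)^{\frac12}\varphi\|\,\|\psi\|$; taking the supremum over $\psi$ yields \eqref{eq:sector} for $\sigma=-$.

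Finally, for the vanishing \eqref{eq:A-} with $\varphi\in\FK_2$, I would compute $\fA_-^N\varphi\in\FK_1$ directly from \eqref{3.11}: its coefficient at $(l_1,k_1)$ equals $\lambda_N$ times $\sum_{p+q=k_1}\fR^N_{p,q}\sum_{i,t}k_1^i\hat\Lp_{l_1,t}(k_1)\hat\varphi((t,p),(i,q))$. The division by $(-\fL_0)^{\frac12}$ contributes a factor $|k_1|^{-1}$ that exactly cancels the $|k_1|$ coming from $k_1^i$, so that
\begin{align*}
\|(-\fL_0)^{-\tfrac12}\fA_-^N\varphi\|^2
\lesssim\lambda_N^2\sum_{l_1,k_1}\Big(\sum_{p,i,t}|\hat\varphi((t,p),(i,k_1-p))|\Big)^2.
\end{align*}
For smooth $\varphi$ the kernel $\hat\varphi$ decays faster than any polynomial, so (bounding $\fR^N\le1$) the double sum converges to a finite constant independent of $N$; since $\lambda_N\to0$ as $N\to\infty$ in both regimes, the whole expression tends to $0$. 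This last step is comparatively soft, relying only on the rapid decay of $\hat\varphi$ and the vanishing of the coupling $\lambda_N$, whereas the genuine difficulty of the lemma is the uniform control of the off-diagonal terms in the sector bound.
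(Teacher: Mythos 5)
Your plan for the creation part contains a genuine gap at exactly the step you flag as the main obstacle. The paper never expands the square of the symmetrized sum over pairs. Following \cite[Lemma 2.4]{CGT24}, it reduces \eqref{eq:sector} via the variational formula to the bilinear estimate \eqref{3.17_1}, i.e. $|\langle\rho,\fA^N_+\varphi\rangle|\lesssim\lambda\sqrt{n}\bigl(\gamma\|(-\fL_0)^{\frac12}\varphi\|^2+\gamma^{-1}\|(-\fL_0)^{\frac12}\rho\|^2\bigr)$ for $\varphi\in\FK_n$, $\rho\in\FK_{n+1}$. In this pairing, the symmetry of $\hat\rho$ makes all $n(n+1)$ pairs $(i,j)$ contribute identically, so no cross terms ever appear; and the crucial factor $\sqrt{n}$ (rather than $n^{3/2}$, which a single crude Cauchy--Schwarz would give) comes from redistributing the Laplacian weights by exchangeability of the kernels, e.g. $\sum_{k}(|k_i|^2+|k_j|^2)|\hat\rho(k_{1:n+1})|^2=\frac{2}{n+1}\sum_k|k_{1:n+1}|^2|\hat\rho(k_{1:n+1})|^2$, with an analogous $\frac1n$ gain on the $\varphi$ side. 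Your route instead generates of order $n^4$ cross terms, and your proposed fix --- ``a Schur-type argument exploiting shared momenta, exactly as in \cite{CGT24}'' --- points to a proof that does not exist there in that form: the diagonal/off-diagonal Schur analysis in \cite{CGT24} (and in Lemma \ref{Lem5.1} of this paper) belongs to the Replacement Lemma, serves a different purpose, and yields bounds carrying the weight $\fN$ (with an extra $\lambda_N^2$ smallness), not the $\sqrt{\fN}$ weight claimed in \eqref{eq:sector}. Since a generic cross term, estimated termwise in absolute value, is of the same order as a diagonal term, it is not visible how your expansion closes with total contribution $O(n)$ uniformly in the chaos level, which the graded statement requires; the symmetry-based weight redistribution is the missing idea, and it is destroyed once one expands the square and takes absolute values. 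A secondary remark: the paper first removes both Leray projections \emph{exactly}, using that $\rho$ is divergence-free in each slot (so $\hat\Lp(k_i)$ drops when paired against $\hat\rho$), which is what allows the scalar estimate of \cite[(2.12)]{CGT24} to apply verbatim; your bound $\|\hat\Lp(k)\|\le1$ is harmless for upper bounds but forgoes this reduction.

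The remaining components of your proposal are sound and consistent with the paper: the orthogonality across chaos levels, the diagonal arithmetic reducing to $\lambda_N^2\sum_{1\le|p|\le N}|p|^{-2}\lesssim_d\lambda^2$ under the scaling \eqref{lambda} (this is indeed the heart of the estimate, with $|\cdot|_\infty$ in $\fR^N$ for $d\ge3$ changing nothing), and the duality transfer to $\fA_-^N=-(\fA_+^N)^*$ with the weight $\fN^{-\frac12}$ absorbing the level factor --- the paper obtains both signs $\sigma\in\{+,-\}$ at once from the variational formula, but your version is equivalent. Your proof of \eqref{eq:A-} is also essentially the paper's: both cancel the factor $|k_1|$ from $k_1^i$ against $(-\fL_0)^{-\frac12}$ and conclude from $\lambda_N\to0$; the paper inserts weights $(|p|^2+|q|^2)^{\frac\alpha2}$ with $\alpha>1$ and uses finiteness of $\|(-\fL_0)^{\frac\alpha2}\varphi\|$, whereas you use rapid decay of $\hat\varphi$ directly, which works equally well for smooth $\varphi\in\FK_2$.
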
 

\section{{The Fluctuation-Dissipation Theorem}}\label{sec3}
In this section, we revisit the Fluctuation-Dissipation Theorem from \cite{CGT24}, which is crucial for proving Theorem \ref{thm:main}.
We begin by establishing the tightness of the sequence $\{u^N\}_{N\geq1}$. 
{A key tool for this purpose is the so-called \emph{It\^o trick}.
\begin{lem}[It\^o trick]\label{lem3.1}
	Let $d\geq2$, and let $u^N$ be the solution to \eqref{eq:scaled-NS} with $\lambda_N$ given as in \eqref{lambda}. For any $p\geq2, T>0, n\geq1$ and $F\in\oplus_{k=1}^n\fH_k$, it holds that
	\begin{align*}
		\left[\bE\sup_{t\in[0,T]}\left|\int_0^TF(u^N_s)\ud s\right|^p\right]^{\frac1p}\lesssim_{n,p}T^{\frac12}\|(-\fL_0)^{-\frac12}F\|\,.
	\end{align*}
\end{lem}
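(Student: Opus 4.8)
The plan is to run the \textbf{forward--backward (It\^o--Dynkin) martingale decomposition}, exploiting the structure $\fL^N=\fL_0+\fA^N$ in which $\fL_0$ is self-adjoint and nonpositive on $L^2(\mu)$ while $\fA^N=\fA^N_++\fA^N_-$ is antisymmetric (as $\fA^N_-=-(\fA^N_+)^*$), together with the fact that $u^N$ is stationary with invariant measure $\mu$. First I would set $h:=(-\fL_0)^{-1}F$, which is well defined because $F\in\oplus_{k=1}^n\fH_k$ has no zeroth chaos and $-\fL_0$ is invertible there; since $\fL_0$ acts diagonally on each chaos, $h\in\oplus_{k=1}^n\fH_k$ as well. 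By Dynkin's formula for the stationary Markov process,
\[
M_T^+:=h(u^N_T)-h(u^N_0)-\int_0^T(\fL^N h)(u^N_s)\ud s
\]
is a martingale. Running the process backward, stationarity and $\mu$-reversibility of $\fL_0$ combined with antisymmetry of $\fA^N$ show that $s\mapsto u^N_{T-s}$ is Markov with generator $\fL_0-\fA^N$, producing a second (backward) martingale $M_T^-$. Adding the two Dynkin decompositions, the antisymmetric parts cancel while $\fL_0 h=-F$, yielding the key identity $2\int_0^T F(u^N_s)\ud s=M_T^++M_T^-$, so it suffices to bound each martingale.

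For the martingale estimate I would use Burkholder--Davis--Gundy, $\bE[\sup_{t\le T}|M_t^+|^p]^{1/p}\lesssim_p\bE[\langle M^+\rangle_T^{p/2}]^{1/p}$. The bracket equals $\langle M^+\rangle_T=\int_0^T\Gamma_0(h)(u^N_s)\ud s$, where $\Gamma_0$ is the carr\'e du champ of $\fL_0$: indeed $\fA^N$ is a first-order operator (a derivation), so its contribution $\fA^N(h^2)-2h\fA^N h$ vanishes and only $\fL_0$ survives in $\Gamma$. Integrating against $\mu$ gives $\bE_\mu[\Gamma_0(h)]=2\langle h,(-\fL_0)h\rangle=2\|(-\fL_0)^{1/2}h\|^2=2\|(-\fL_0)^{-1/2}F\|^2$. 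By Minkowski's integral inequality (or Jensen in time) and stationarity, $\bE[\langle M^+\rangle_T^{p/2}]\le T^{p/2}\|\Gamma_0(h)\|_{L^{p/2}(\mu)}^{p/2}$.

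The last ingredient is the passage from the $L^{p/2}$ norm to the $L^1$ norm. Because $h$ lies in a fixed finite sum of chaoses, $\Gamma_0(h)$ lies in $\oplus_{k=0}^{2n-2}\fH_k$, and Nelson's hypercontractivity (equivalence of all moments on a fixed finite chaos) gives $\|\Gamma_0(h)\|_{L^{p/2}(\mu)}\lesssim_{n,p}\|\Gamma_0(h)\|_{L^1(\mu)}=\bE_\mu[\Gamma_0(h)]=2\|(-\fL_0)^{-1/2}F\|^2$, using $\Gamma_0(h)\ge0$. Collecting the bounds yields $\bE[\sup_{t\le T}|M_t^+|^p]^{1/p}\lesssim_{n,p}T^{1/2}\|(-\fL_0)^{-1/2}F\|$, and the identical argument applied to the backward process controls $M_T^-$; the triangle inequality then gives the claim.

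I expect the main obstacle to be the rigorous justification of the forward--backward decomposition in this infinite-dimensional stationary setting, namely identifying the time-reversed generator as $\fL_0-\fA^N$ and applying Dynkin's formula to $h$ lying in a Wiener chaos rather than a cylinder function; this is where reversibility of $\fL_0$ and antisymmetry of $\fA^N$ are essential. The finite-chaos hypothesis $F\in\oplus_{k=1}^n\fH_k$ is precisely what makes the hypercontractivity step (and hence the $p$- and $n$-dependence of the constant) work, and it is what allows $(-\fL_0)^{-1}F$ to be well defined.
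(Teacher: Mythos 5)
Your proposal is correct and is essentially the paper's proof: the paper does not prove Lemma \ref{lem3.1} itself but refers to \cite[Lemma 2]{GJ13}, whose argument is exactly your forward--backward martingale decomposition with $h=(-\fL_0)^{-1}F$, cancellation of the antisymmetric part $\fA^N$ via the time-reversed generator $\fL_0-\fA^N$, Burkholder--Davis--Gundy applied to the bracket $\int_0^T\Gamma_0(h)(u^N_s)\ud s$, and Gaussian hypercontractivity on a fixed finite chaos to pass from $L^{p/2}(\mu)$ to $\bE_\mu[\Gamma_0(h)]=2\|(-\fL_0)^{-1/2}F\|^2$. The subtleties you flag (identifying the reversed generator and justifying It\^o's formula for chaos functionals) are handled in \cite{GJ13} by cylinder-function approximation, just as you anticipate.
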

We refer the readers to \cite[Lemma 2]{GJ13} for a proof. Using It\^o trick and a similar argument as \cite[Proposition 3.2]{CGT24}, we can deduce the following.
\begin{theorem}\label{thm:tight}
	The sequence $\{u^N\}_{N\geq1}$ is tight in $C([0,T],\fS'(\TT^d,\RR^d)).$
\end{theorem}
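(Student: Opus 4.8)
The plan is to combine a Mitoma-type reduction with a Kolmogorov moment criterion, leveraging the two tools already in hand: the It\^o trick (Lemma \ref{lem3.1}) and the graded sector condition (Lemma \ref{lem:sector}). First I would invoke Mitoma's criterion for the nuclear space $\fS'(\TT^d,\RR^d)$: it suffices to show that, for every fixed divergence-free test function $\varphi\in\fS(\TT^d,\RR^d)\cap\HH$, the real-valued processes $\{t\mapsto u^N_t(\varphi)\}_{N\geq1}$ are tight in $C([0,T],\RR)$ (for general $\varphi$ one uses $u^N_t(\varphi)=u^N_t(\Lp\varphi)$, since $u^N_t\in\HH$ and $\Lp\varphi\in\fS\cap\HH$). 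Tightness of the one-point marginals is immediate from stationarity: $u^N$ starts from and preserves the white-noise law $\mu$, so for each $t$ the law of $u^N_t(\varphi)$ is the fixed centered Gaussian law of $\eta(\varphi)$, independent of both $N$ and $t$.

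It then remains to produce a uniform-in-$N$ modulus of continuity. Starting from the weak formulation \eqref{3.1}, for $0\le s\le t\le T$ one has
\[
u^N_t(\varphi)-u^N_s(\varphi)=\int_s^t u^N_r(\Delta\varphi)\,\ud r-\lambda_N\int_s^t B^N(u^N_r)(\varphi)\,\ud r+\left(M^\varphi_t-M^\varphi_s\right),
\]
and I would estimate the three contributions separately in $L^p(\bP)$ with $p>2$. The linear drift is controlled by stationarity and Jensen's inequality, giving $\bE|\int_s^t u^N_r(\Delta\varphi)\,\ud r|^p\lesssim_{\varphi,p}|t-s|^p$. The martingale increment is handled by the Burkholder--Davis--Gundy inequality together with the explicit quadratic variation $\langle M^\varphi\rangle_t=2t\|(-\Delta)^{\frac12}\Lp\varphi\|_{L^2(\TT^d,\RR^d)}^2$, which is linear in $t$ and independent of $N$, so that $\bE|M^\varphi_t-M^\varphi_s|^p\lesssim_{\varphi,p}|t-s|^{p/2}$.

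The decisive term is the nonlinearity, and this is where the preparatory lemmas enter. Regarding $\varphi$ as an element of $\FK_1$, the quadratic functional $u\mapsto-\lambda_N B^N(u)(\varphi)$ coincides with $\fA^N_+\varphi$ acting in the second chaos, since $\fL_0\varphi$ produces the linear drift and $\fA^N_-(\FK_1)=0$ by Theorem \ref{thm.Geneator_in_Fourier}; in particular $-\lambda_N B^N(\cdot)(\varphi)\in\oplus_{k=1}^2\fH_k$. Applying Lemma \ref{lem3.1} on the subinterval $[s,t]$ yields
\[
\left[\bE\Big|\lambda_N\int_s^t B^N(u^N_r)(\varphi)\,\ud r\Big|^p\right]^{1/p}\lesssim_p|t-s|^{1/2}\,\|(-\fL_0)^{-\frac12}\fA^N_+\varphi\|,
\]
and the sector condition \eqref{eq:sector}, using $\fN\varphi=\varphi$ on $\FK_1$, bounds the right-hand norm by $\lambda\|(-\fL_0)^{\frac12}\varphi\|$, which is finite for smooth $\varphi$ and independent of $N$. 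Raising to the $p$-th power this contributes $|t-s|^{p/2}$. Summing the three estimates gives $\bE|u^N_t(\varphi)-u^N_s(\varphi)|^p\lesssim_{\varphi,p}|t-s|^{p/2}$ uniformly in $N$; choosing $p>2$ makes the exponent $p/2>1$, so Kolmogorov's criterion provides tightness of $\{u^N_\cdot(\varphi)\}$ in $C([0,T],\RR)$, and Mitoma's criterion concludes.

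I expect the main obstacle to be exactly the uniform-in-$N$ control of the nonlinear drift: a naive bound fails because $\lambda_N B^N$ does not vanish and is not even uniformly bounded in any elementary norm. The point is that its time-integral is tamed only after combining the It\^o trick with the sector condition, so the genuine analytic work has already been deposited in Lemmas \ref{lem3.1} and \ref{lem:sector} and in the identification of the nonlinearity with $\fA^N_+\varphi$; once these are granted, the remaining steps are routine.
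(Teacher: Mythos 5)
Your proposal is correct and follows essentially the same route as the paper, which proves Theorem \ref{thm:tight} by exactly this scheme (deferring to the argument of \cite[Proposition 3.2]{CGT24}): Mitoma's reduction to the scalar processes $u^N_\cdot(\varphi)$, stationarity for the marginals and the linear drift, BDG for the martingale with its explicit $N$-independent quadratic variation, and the It\^o trick of Lemma \ref{lem3.1} combined with the sector condition \eqref{eq:sector} to tame the nonlinearity, identified as $I_2(\fA^N_+\varphi)\in\fH_2$ via $\fA^N_-(\FK_1)=0$. Your closing diagnosis is also accurate: the only non-routine input is the uniform-in-$N$ bound $\|(-\fL_0)^{-\frac12}\fA^N_+\varphi\|\lesssim\lambda\|(-\fL_0)^{\frac12}\varphi\|$, which is precisely what Lemma \ref{lem:sector} supplies.
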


{Having established tightness, we aim to show that the law of each limit point of $u^N$ coincides with that of the solution to \eqref{eq:limeq}, which is characterized uniquely by the following martingle problem.
\begin{definition}\label{def:mart}
	Let $d\geq2,\, T>0,\, \Omega = C([0,T],\fS'(\TT^d,\RR^d))$, and $\fB$ the canonical Borel $\sigma$-algebra on $\Omega$. Let $\mu$ be a mean-zero and divergence-free space white noise on $\TT^d$ and $D$ be the constant in \eqref{eq:limeq}.
	We say that a probability measure $\bP$ on $(\Omega,\fB)$ solves the martingale problem for $(1+D)\fL_0$
	with initial distribution $\mu$, if for all
	$\varphi\in\fS(\TT^d,\RR^d)$, the canonical process $u$ under $\bP$ is such that
	\begin{align*}
		F_j(u_t)-F_j(\mu)-\int_0^t(1+D)\fL_0F_j(u_s)\ud s,\quad j=1,2
	\end{align*}
is a local martingale, where $F_1(u):=u(\varphi)$ and $F_2(u) := u(\varphi)^2-\|\varphi\|_{L^2(\TT^d,\RR^d)}^2$.
\end{definition}
Following the proof of \cite[Theorem D.1]{MW17}, we can show that the martingale problem for $(1+D)\fL_0$ in Definition \ref{def:mart} has a unique solution and uniquely characterises the law of the solution to \eqref{eq:limeq}. Therefore, we only need to porve each limit point of $\{u^N\}_{N\geq1}$ solves this martingale problem.  
The key ingredient for this is the following Fluctuation–Dissipation Theorem.}

\begin{theorem}[The Fluctuation-Dissipation Theorem]\label{thm:F-D}
	Let the divergence-free functions $\varphi,\,\psi\in\fS(\TT^d,\RR^d)$ be fixed, and define 
	\begin{align}\label{eq:fi}
		f_1:=\varphi,\quad f_2:=[\psi\otimes\varphi]_{sym}=\frac12(\varphi\otimes\psi+\psi\otimes\varphi)\,.
	\end{align}
	Then for $i=2,3$, there exists a family $\mathcal V^i=\{v^{N,n}\in\oplus_{j=i}^n\FK_j, n\in\NN\}$ and a positive constant $D$ (which is independent of $i$) such that
	\begin{align}
		&\lim_{n\to\infty}\limsup_{N\to\infty}\|v^{N,n}\|=0,\label{5.1}\\
		&\lim_{n\to\infty}\limsup_{N\to\infty}\|(-\fL_0)^{-\frac12}(-\fL^Nv^{N,n}-\fA^N_+f_{i-1}+\fA_-^Nv_i^{N,n})\|=0,\label{5.2}\\
		&\lim_{n\to\infty}\limsup_{N\to\infty}\|(-\fL_0)^{-\frac12}(\fA_-^Nv_i^{N,n}-D\fL_0 f_{i-1})\|=0\,.\label{5.3}
	\end{align}
\end{theorem}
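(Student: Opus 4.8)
The plan is to follow the corrector/resolvent strategy of \cite{CGT24}. Conditions \eqref{5.1}--\eqref{5.3} are designed to produce a corrector $v^{N,n}$ solving, up to a vanishing error, the resolvent equation $-\fL^N v^{N,n}=\fA^N_+f_{i-1}$ truncated to the chaoses $\FK_i\oplus\cdots\oplus\FK_n$, and to read off the effective diffusivity $D$ from its lowest component. Writing $v^{N,n}=\sum_{j=i}^n v^{N,n}_j$ with $v^{N,n}_j\in\FK_j$ and using that $\fL_0$ preserves each chaos while $\fA^N_\pm$ shift it by $\pm1$, the vanishing in \eqref{5.2} is equivalent to solving the tridiagonal system
\begin{equation*}
	-\fL_0 v^{N,n}_j-\fA^N_+v^{N,n}_{j-1}-\fA^N_-v^{N,n}_{j+1}=\delta_{j,i}\,\fA^N_+f_{i-1},\qquad i\le j\le n,
\end{equation*}
with the conventions $v^{N,n}_{i-1}=0$, $v^{N,n}_{n+1}=0$; indeed the $\FK_{i-1}$-component of $-\fL^Nv^{N,n}$ is $-\fA^N_-v^{N,n}_i$, which is precisely the term added back in \eqref{5.2}, so that the only genuine residual is the truncation leakage $\fA^N_+v^{N,n}_n\in\FK_{n+1}$. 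Eliminating the top components successively yields the nonnegative continued-fraction operators of Remark \ref{rmk}: with $\fH^{N,n}_n:=0$ and $\fH^{N,n}_{j-1}:=-\fA^N_-(-\fL_0+\fH^{N,n}_j)^{-1}\fA^N_+=(\fA^N_+)^*(-\fL_0+\fH^{N,n}_j)^{-1}\fA^N_+\ (\ge0)$, one obtains $v^{N,n}_i=(-\fL_0+\fH^{N,n}_i)^{-1}\fA^N_+f_{i-1}$ together with $\fA^N_-v^{N,n}_i=-\fH^{N,n}_{i-1}f_{i-1}$. Consequently \eqref{5.3} is exactly the assertion that $\fH^{N,n}_{i-1}f_{i-1}\to D(-\fL_0)f_{i-1}$ after $N\to\infty$ and then $n\to\infty$; this limit both defines $D$ and identifies the fixed point $\fH^N$ of Remark \ref{rmk}.

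Conditions \eqref{5.1} and \eqref{5.2} I would establish as in the framework recalled in Appendix \ref{appendixB}. Testing the truncated system against $v^{N,n}$ and using the antisymmetry $\fA^N_-=-(\fA^N_+)^*$, the indefinite off-diagonal terms cancel and the energy identity $\|(-\fL_0)^{1/2}v^{N,n}\|^2=\operatorname{Re}\langle v^{N,n}_i,\fA^N_+f_{i-1}\rangle$ gives, via Cauchy--Schwarz and the graded sector condition \eqref{eq:sector}, the a priori bound $\|(-\fL_0)^{1/2}v^{N,n}\|\lesssim_d\lambda\|\sqrt{\fN}(-\fL_0)^{1/2}f_{i-1}\|$ uniformly in $N,n$. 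The corrector is then negligible in $L^2(\mu)$ because its $H_1$-energy stays bounded while concentrating at high spatial frequencies (the mollifier cutoff $|k|\lesssim N$ together with the critical/supercritical size of $\lambda_N$), so that $\|v^{N,n}\|=\|(-\fL_0)^{-1/2}(-\fL_0)^{1/2}v^{N,n}\|\to0$; this is \eqref{5.1}. For \eqref{5.2} only the leakage survives, and $\|(-\fL_0)^{-1/2}\fA^N_+v^{N,n}_n\|^2\lesssim_d\lambda^2 n\,\|(-\fL_0)^{1/2}v^{N,n}_n\|^2$ by \eqref{eq:sector}; closing the estimate requires the top-level energy to be $o(n^{-1})$ uniformly in $N$, which follows from the decay of the energy along the chaos ladder, itself a consequence of the improved bound \eqref{eq:A-} on $\fA^N_-$ in $\FK_2$ controlling the growth of the continued fraction.

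The crux is \eqref{5.3}, the identification $\fH^{N,n}_{i-1}f_{i-1}\to D(-\fL_0)f_{i-1}$ with the same $D$ for $i=2$ and $i=3$. In the critical case $d=2$ I would invoke the Replacement Lemma (Proposition \ref{prop:A}): up to an error controlled by $\|\fN(-\fL_0)^{1/2}\cdot\|$, each $\fH^{N,n}_j$ acts as a scalar multiple of $(-\fL_0)$, so the operator recursion collapses to the scalar fixed-point relation $x=\tfrac{c\lambda^2}{1+x}$ whose positive root yields $D=\sqrt{\lambda^2/(8\pi)+1}-1$. In the supercritical case $d\ge3$ the diagonal replacement is no longer admissible: as explained in Remark \ref{rmk}, both the diagonal and the off-diagonal parts of $\langle(-\fL_0)^{-1}\fA^N_+\psi,\fA^N_+\psi\rangle$ contribute nonvanishing limits. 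There I would follow Proposition \ref{prop:limite}, computing the limit of $\fH^{N,n}_{i-1}$ directly and organising the iteration of the fixed point into the asymptotic expansion $D=\sum_{l=1}^m f_l\lambda^{2l}+R_m$ by an induction on the order of the continued fraction, with the factorial bounds on $f_l$ and $R_m$ extracted from estimates on the iterated kernels.

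I expect the main obstacle to be the computation of the limit in \eqref{5.3} in the presence of the Leray projection. Because $\fA^N_+$ inserts the single-momentum matrix $\hat\Lp(k_i)$ acting on the first vector slot (Theorem \ref{thm.Geneator_in_Fourier}), the vector components of $v^{N,n}$ do not decouple as in the scalar Burgers setting, and the quadratic forms defining $\fH^{N,n}_{i-1}$ mix the basis directions $a_{k,\alpha}$ across frequency shells. To handle this I would carry out the case-by-case analysis of the Leray action announced in the introduction and pass, in each fixed-momentum integral, to a rotational change of coordinates diagonalising the resulting quadratic form, thereby reducing the vector-valued fixed point to the scalar computation producing $D$. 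The final consistency point, that $D$ is the same whether one starts from $f_1=\varphi\in\FK_1$ or from $f_2=[\psi\otimes\varphi]_{sym}\in\FK_2$, would follow by showing that the cross (inter-particle) contributions of $\fH^{N,n}_2$ on $f_2$ are negligible, so that $\fH^{N,n}_2$ acts as $D(-\fL_0)$ one particle at a time.
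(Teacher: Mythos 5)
Your overall skeleton (solve the truncated tridiagonal system $-\fL^N_{i,n}v^{N,n}=\fA^N_+f_{i-1}$, observe that the only residual in \eqref{5.2} is the leakage $-\fA^N_+v^{N,n}_n$, extract a uniform a priori bound from the energy identity and the graded sector condition \eqref{eq:sector}) does match the paper's setup, but the identification of $D$ — the actual content of \eqref{5.3} — is wrong where it is specific. In $d=2$ you claim the Replacement Lemma makes each $\fH^{N,n}_j$ act as a \emph{constant} multiple of $-\fL_0$, collapsing the recursion to the algebraic fixed point $x=\frac{c\lambda^2}{1+x}$ "whose positive root yields $D=\sqrt{\lambda^2/(8\pi)+1}-1$". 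This is internally inconsistent: the positive root of $x=\frac{c\lambda^2}{1+x}$ is $\frac{1}{2}\bigl(\sqrt{1+4c\lambda^2}-1\bigr)$, which differs from $\sqrt{1+\lambda^2/(8\pi)}-1$ at order $\lambda^4$ for any constant $c$ matching at order $\lambda^2$ (here $c=\frac{1}{16\pi}$). The paper's replacement operator is not a constant multiple of $-\fL_0$ but the scale-dependent $\fG^N=G(L^N(-\fL_0))$ with $L^N(x)=\lambda_N^2\log(1+N^2x^{-1})$, and the self-consistency proved in Lemma \ref{Lem5.1} and Proposition \ref{prop:A} is \emph{integral}, $G(L)=\frac{1}{32\pi}\int_0^L\frac{\ud t}{1+G(t)}$, equivalently $(1+G)^2=1+\frac{L}{16\pi}$: in the weak-coupling regime the logarithmic divergence is spread over all scales between $|k|$ and $N$, each weighted by the running diffusivity $\frac{1}{1+G(\text{scale})}$ rather than the terminal $\frac{1}{1+G(L)}$, so the effective relation is $x=\frac{c\lambda^2}{1+x/2}$, not $x=\frac{c\lambda^2}{1+x}$. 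Indeed the paper devotes Remark \ref{rmk} and Corollary \ref{cor} to showing that exactly the algebraic fixed-point heuristic you invoke ($D_{\mathrm{rep}}$) produces the \emph{wrong} constant.

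For $d\geq3$ there is a second structural gap: you propose to obtain \eqref{5.3} by "organising the iteration of the fixed point into the asymptotic expansion $D=\sum_{l\le m} f_l\lambda^{2l}+R_m$", but with $|f_l|\le l!\,C^l$ that series is only asymptotic and cannot be summed to define $D$ at fixed $\lambda$; the expansion is a \emph{corollary} (Section \ref{sec5}), not an ingredient. The paper instead expands the truncated resolvent into the Dyson/random-walk series $\int_0^\infty e^{-s}\sum_a\frac{s^a}{a!}\langle(T^N_{2,n})^aT^{N,+}\sigma_{k,1},T^{N,+}\sigma_{-k,\alpha}\rangle\,\ud s$ as in \eqref{C.3}, computes each path's $N\to\infty$ limit via Proposition \ref{prop:limite}, shows $\{D^n\}_n$ is Cauchy, and proves \emph{strict} positivity of $D$ (part of the statement, and only $D\ge0$ follows from your $\fH^{N,n}_{j}\ge0$) by a separate argument: the momentum-commutation \eqref{eq:commute} together with \eqref{eq:liminner1} forces $\fA^N_-v_2^{N,n}[-k]$ to be proportional to $\sigma_{k,1}$, after which a variational lower bound applies. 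Two further misattributions: the smallness of the top-chaos energy needed to kill the leakage in \eqref{5.2} does not follow from \eqref{eq:A-} — that estimate concerns $\fA^N_-$ on $\FK_2$ and enters the passage from Theorem \ref{thm:F-D} to Theorem \ref{thm:main} — but from the CGT24-style decay along the chaos ladder via the graded sector condition; and the decoupling of polarization indices (the factor $\1_{\bt'=\bt}$, without which the limit is not the decoupled heat equation) is not obtained by "diagonalising a quadratic form" but by combining the Hermitian symmetry of $\sum_{p\in\Pi^{(n)}_{a,1}}\fT^N_p$ (giving $B(t_1,t_1')=B(t_1',t_1)$) with handedness-preserving rotations flipping one basis vector's sign (giving $B(t_1,t_1')=-B(t_1',t_1)$ off the diagonal and $B(i,i)=B(1,1)$), whence $B$ is a constant multiple of the identity.
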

{In the following sections, the main task is to establish the existence of functions $\{v^{N,n}\}_{N\geq1,n\in\NN}$ satisfying conditions \eqref{5.1}-\eqref{5.3} in Theorem \ref{thm:F-D}. 
Once this theorem is proved, Theorem~\ref{thm:main} follows by the same arguments as in Subsection~3.2 of \cite{CGT24}, combined with~\eqref{eq:A-}.
}

\section{Proof of Theorem \ref{thm:main} when $d\geq3$}\label{sec4}

We follow the route in \cite{CGT24} to directly verify that the solution to 
\begin{align}\label{eq:geneq}
	-\fL^N_{i,n}u^{N,n}=\fA^N_+f_{i-1}
\end{align}
satisfies conditions \eqref{5.1}–\eqref{5.3}, where $\fL^N_{i,n}=P_{i,n}\fL^NP_{i,n}$ with $P_{i,n}$ the orthogonal projection onto $\oplus_{j=i}^n\FK_j.$
{Compared with \cite{CGT24}, the main additional difficulty comes from the Leray projection and inner-product in the action of $\fA^N_{\pm}$ in \eqref{3.10} and \eqref{3.11}, which makes the vector components coupled and leads to several distinct cases for the divergence-free basis in $\RR^d$ during iteration in the proof of Lemma \ref{lem5.8} below.
In subsection \ref{sec4.2}, we discuss all these cases carefully and use the anti-symmetry property of $\fA^N$ to get the vector components in the limit are decoupled.
First, we recall some notations from \cite{CGT24}. 
}
\subsection{Notations and main proposition}\label{sec4.1}

Let $2\leq n\in\NN$ be fixed throughout the section.
For $N>0$, $i=2,3$ and $\sigma\in\{+,-\}$, define
\begin{align}
	T^{N,\sigma}&:= (-\fL_0)^{-1/2}(\fA^N_{\sigma})(-\fL_0)^{-1/2}\,,\nonumber\\
	T^N_{i,n}&:= (-\fL_0)^{-1/2}(\fA^N_{i,n})(-\fL_0)^{-1/2}\,,\nonumber\\
	T^{N,\sigma}_{i,n}&:= (-\fL_0)^{-1/2}(\fA^{N,\sigma}_{i,n})(-\fL_0)^{-1/2},\label{4.2}
\end{align}
where $\fA^N_{i,n}=P_{i,n}\fA^NP_{i,n},$ and $\fA^{N,\sigma}_{i,n}=P_{i,n}\fA_\sigma^NP_{i,n}.$
Since $n$ is fixed, by Lemma \ref{lem:sector},
$T^{N,\sigma}_{i,n}$ is bounded in $\FK$ uniformly in $N$,
and is skew-Hermitian. 
For $m\ge1$ and $a\ge 0$, define $\Pi^{(n)}_{a,m}$ be the set
of simple random walk paths $p=(p_0,\dots,p_a)$ such that $p_0=1$ and $p_a=m$, which do not reach height $n+1$ and can only reach $1$ at the endpoints.  
Given $p\in \Pi^{(n)}_{a,m}$, let $|p|:=a$ and
$\mathcal T^N_p$ be defined according to 
\begin{align}\label{4.19}
	\fT^N_p:= T^{N,\sigma_a}T^{N,\sigma_{a-1}}\cdots T^{N,\sigma_1},
\end{align}
where $\sigma_j=+$ (resp.$-$) if $p_{j}-p_{j-1}=1$ (resp.$-1$). If $|p|=0$, $\mathcal T^N_p:= 1$.
For $n\geq1$, recall the definition of $L^2_n$ in \eqref{L^2_n}. 
Similarly as \cite{CGT24}, we rewrite the action of the operators $\fA^N_+,\,\fA^N_-$ as follows: for $f\in L^2_n$,
\begin{align}
	&\fA_+^{N}f=\frac{1}{n+1}\sum_{1\leq i\neq i'\leq n+1}\fA_+^{N}[(i,i')]f,\quad
	{\fA_-^Nf}=n\sum_{q=1}^{n-1}\fA_-^{N}[q]f \,.\label{A_-}
\end{align} 
Here for $1\leq i\neq i'\leq n+1$, $1\leq q\leq n-1,$
\begin{align}
		\fF(\fA_+^{N}[(i,i')]f)(l_{1:n+1},k_{1:n+1}) :=&{\lambda_N 2\pi\iota}\fR_{k_{i},k_{i'}}^N(\hat\Lp(k_i)(k_i+k_{i'}))(l_i)\times\nonumber\\
		&\times\left(\hat\Lp(k_{i'})
		\hat f((\cdot,k_i+k_{i'}),(l_{1:n+1\backslash i,i'},k_{1:n+1\backslash i,i'}))\right)(l_{i'})\,,\nonumber\\
		\fF(\fA_-^{N}[q]f)(l_{1:n-1},k_{1:n-1}) :=&\lambda_N2\pi\iota \sum_{l+m=k_q}\fR_{l,m}^N
		\sum_{i,t=1}^dk_q^i{\hat{\Lp}_{l_q,t}(k_q)}\hat f((t,l),(i,m),(l_{1:n-1\backslash q},k_{1:n-1\backslash q}))\,.\label{Aq}
\end{align}
Let $T^{N,+}[(i,i')]$ and $T^{N,-}[q]$ be defined as in \eqref{4.2} with $\fA^N_+, \fA^N_-$ replaced by $\fA^N_+[(i,i')]$ and $\fA^N_-[q]$ respectively.
Arguing as the proof of \cite[Lemma 2.14]{CGT24}, we can derive that  
\begin{align}\label{e:TboundNew}
	\|T^{N,+}[(i,i')] f\|_{L^2_{n+1}}\vee \|T^{N,-}[q] f\|_{L^2_{n-1}}\lesssim\|f\|_{L^2_n}
\end{align}
for every $f\in L^2_n$.
Then $\mathcal{T}_p^N$ in \eqref{4.19} can be written in terms of $\fA_+^{N}[(i,i')],\;\fA_-^{N}[q].$ 

{For $f\in L^2_n$, let $\kappa(f):=n$. Recall that $\{\sigma_{k,1},\cdots,\sigma_{k,d-1}: k\in\ZZ_0^d\}$ given by  \eqref{vector-fields} is a CONS of $\HH$.
For $j_1,j_2\in\ZZ_0^d$ and $t_1,t_2\in\{1,\cdots,d-1\},$
let $\bj$ be either $j_1$ or $j_{1:2}$, and $\bt$ be either $t_1$ or $t_{1:2}$.
We denote $\sigma_{\bj,\bt}$ by $\sigma_{j_1,t_1}$ if $\kappa:=\kappa(\sigma_{\bj,\bt})=1$ or
$$\sigma_{j_{1:2},t_{1:2}}:=\frac{\sigma_{j_1,t_1}\otimes \sigma_{j_2,t_2}+\sigma_{j_2,t_2}\otimes \sigma_{j_1,t_1}}{2},$$
if $\kappa=2.$}
For $p\in\Pi_{a,m}^{(n)}$ with $a\geq0$ and $m\geq1$,
\begin{align}
	\mathcal{T}_p^N\sigma_{\bj,\bt}
	=\frac{1}{C_{m,\kappa}}\sum_{g\in\fG^\kappa[p]}T^{N,\sigma_a}[g_a]\cdots T^{N,\sigma_1}[g_1]\sigma_{\bj,\bt}
	=:\frac{1}{C_{m,\kappa}}\sum_{g\in\fG^\kappa[p]}\mathcal{T}_p^N[g]\sigma_{\bj,\bt}\,,\label{e:Tg}
\end{align}
where $C_{1,\kappa}=1$, $C_{m,\kappa}=\prod_{j=1}^{m-1}(\kappa+j)$ for $m>1$, and $\fG^\kappa[p]$ is a set whose elements $g=(g_s)_{s=1,\dots,a}$ are of the form
\begin{equ}
	g_s:=
	\begin{cases}
		(i_s,i'_s)\,,&\text{if $\sigma_s=+$,}\\
		q_s\,,&\text{if $\sigma_s=-$,}
	\end{cases}
\end{equ}
for $1\le i_s\neq i'_s,q_s\le p_{s}+\kappa-1$. 
{In \cite{CGT24}, the authors introduced a graphical representation for $g\in\fG^\kappa[p]$, see \cite[Fig.1]{CGT24} for an example.
The graph associated with $g = (g_s)_{s=1,\dots,a}$ has
$ a+1 $ columns, labeled from $0$ to $a$, where the $s$-th column contains $p_s + \kappa - 1$ vertices, numbered increasingly from top to bottom.
Edges are drawn only between consecutive columns, with each vertex connected to at most two vertices in the next column. 
If $g_s = (i_s,i'_s)$, the top vertex in column $s-1$, referred to as a branching point,
connects to the vertices with labels $i_s$ and $i'_s$ in column $s$; 
if $g_s = q_s$, the vertex with label $q_s$ in column $s$, referred to as a merging point,
connects to the vertices with labels $1$ and $2$ in column $s-1$. 
A path $\pi$ in the graph associated with $g$ is a sequence of connected vertices where $\pi(j)$ denotes the label of the vertex that $\pi$ encounters in column $j$.
We denote by $\widetilde \fG^2[p]$ the subset of $\fG^2[p]$ consisting of those $g$ whose associated graph has two connected components, one of which is a path with no branching and merging points.
By \cite[Lemma 2.18]{CGT24}, there exists a correspondence between $\fG^1[p]$ and $\tilde\fG^2[p]$. For $g\in\fG^1[p]$, we say that $\hat g$ and $\tilde g$ in $\tilde\fG^2[p]$ are the elements associated with $g$ if the graph associated with $g$ can be obtained from that of $\hat g$ and $\tilde g$ by removing their connected component corresponding
to the single path $\pi$.
}

Now we state the main proposition for the proof of Theorem \ref{thm:F-D}. To the end, we introduce the following notation.
Let $a\geq1,\, 1\leq m\leq a\wedge n$, $p\in\Pi_{a,m}^{(n)}$ and $g\in\fG^1[p]$ or in $\tilde\fG^2[p]$. 
Let $m_\kappa = m+\kappa-1$, where $\kappa=1$ if $g\in\fG^1[p]$ and 2 in the other case. Moreover, let\begin{align}
	{\tilde \sigma}_{\bj,\bt}:=
	\begin{cases}
		\sigma_{j_1,t_1}\,, & \text{ if } \bj=j_1,\bt=t_1, \\
		\sigma_{j_1,t_1}\otimes \sigma_{j_2,t_2}\,&  \text{ if } \bj=j_{1:2},\bt=t_{1:2}\,.
	\end{cases}
\end{align}
Then $\fT^N_p[g] {\tilde\sigma}_{\bj,\bt}\in L^2_{m_\kappa}$.
In the next subsection, we will prove the following result.
	\begin{prop}\label{prop:limite}
		Let $a\geq 1$ be even.  
		Then, there exists a constant $c(a,\lambda)\in\RR$ independent of $\bj,\,\bt$ such that for every $\bj',\,\bt'$,
		\begin{align}
			\lim_{N\to \infty}\sum_{p\in \Pi^{(n)}_{a,1}}\sum_{g\in\fG^1[p]}\langle{\tilde\sigma}_{-\bj',\bt'},\fT^N_p[g]{\tilde\sigma}_{\bj,\bt}\rangle_{L^2_{m_\kappa}}&=c(a,\lambda)\1_{\bj'=\bj}\1_{\bt'=\bt},\label{eq:liminner1}\\
			\lim_{N\to \infty}\sum_{p\in \Pi^{(n)}_{a,1}}\sum_{g\in\tilde\fG^2[p]}\langle{\tilde\sigma}_{-\bj',\bt'},\fT^N_p[g]{\tilde\sigma}_{\bj,\bt}\rangle_{L^2_{m_\kappa}}&=\frac{|j_1|^2}{|\bj|^2}c(a,\lambda)\left(\1_{\bj'=\bj}\1_{\bt'=\bt}+\1_{\bj'=\bj^T}\1_{\bt'=\bt^T}\right),\label{eq:liminner2}
		\end{align}
		where $\bj^T=(j_2,j_1)$, and $\bt^T=(t_2,t_1)$.
	\end{prop}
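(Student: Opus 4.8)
The plan is to compute the limits of the inner products in \eqref{eq:liminner1} and \eqref{eq:liminner2} by expanding $\fT^N_p[g]\tilde\sigma_{\bj,\bt}$ as an iterated action of the single-step operators $T^{N,+}[(i,i')]$ and $T^{N,-}[q]$ along the path $p$, reducing everything to explicit Fourier sums built from the coefficients $\fR^N_{k,k'}$, the Leray matrices $\hat\Lp(k)$, and the orthonormal frame vectors $a_{k,\alpha}$. First I would set up the bookkeeping: each graph $g$ prescribes, via its branching and merging points, a tree-like structure of momenta, so that applying $\mathcal T^N_p[g]$ to a basis element $\tilde\sigma_{\bj,\bt}$ produces a sum over intermediate momenta constrained by momentum conservation (which is preserved because $M_i$ commutes with $\fL^N$ by \eqref{eq:commute}). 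Since $a$ is even and $m=1$, the path returns to height $1$, so the output again lies in $L^2_{m_\kappa}$ with $m_\kappa=1$ or $2$; pairing against $\tilde\sigma_{-\bj',\bt'}$ collapses most of the free momenta through the orthonormality relations \eqref{eq:div-free-basis}, leaving a scalar integral/sum over the remaining loop momenta.

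The core of the argument is to track, through this iterated composition, exactly which vector index lives on the distinguished (non-contracted) leg at each step. This is the case-by-case analysis the introduction promises: when a Leray projection $\hat\Lp(k)=\mathbf I-|k|^{-2}k\otimes k$ hits one of the frame vectors $a_{k,\alpha}$, I would decompose the resulting vector in the local orthonormal basis $\{a_{k,1},\dots,a_{k,d-1},k/|k|\}$ and keep careful account of which components survive. The key structural fact to extract is that, after summing over the internal degrees of freedom along a closed path, the dependence on the external data $(\bj,\bt)$ factors into a scalar constant times the Kronecker constraints appearing on the right-hand sides. For \eqref{eq:liminner1} the single path carries the full momentum $\bj$ back to itself, forcing $\bj'=\bj$, $\bt'=\bt$ and producing the universal constant $c(a,\lambda)$; for \eqref{eq:liminner2}, the two-component structure of $\tilde\fG^2[p]$ (one branching/merging component plus one inert path, per the correspondence with $\fG^1[p]$ from \cite[Lemma 2.18]{CGT24}) splits the momentum between the two legs, which is the source of both the factor $|j_1|^2/|\bj|^2$ and the symmetrized indicator $\1_{\bj'=\bj}\1_{\bt'=\bt}+\1_{\bj'=\bj^T}\1_{\bt'=\bt^T}$ coming from the two ways of matching the labelled factors.

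To show the constant $c(a,\lambda)$ is genuinely independent of $(\bj,\bt)$, I would exploit the rotational covariance of the whole construction: the coefficients $\fR^N$ depend only on the magnitudes of momenta, and a rotational change of coordinates in the summation variables (the rotation aligning $\bj$ with a fixed reference direction) maps the frame $\{a_{\bj,\alpha},\bj/|\bj|\}$ to a standard frame, so that after the change of variables the limiting sum no longer sees $\bt$ and depends on $\bj$ only through $|\bj|$; combined with the diffusive/weak-coupling scaling of $\lambda_N$ and $\fR^N$, the remaining $|\bj|$-dependence cancels in the limit $N\to\infty$, leaving a pure function of $a$ and $\lambda$. The uniform bounds \eqref{e:TboundNew} together with the graded sector condition of Lemma \ref{lem:sector} guarantee that these truncated sums are bounded uniformly in $N$, justifying the interchange of limit and summation over the finitely many paths in $\Pi^{(n)}_{a,1}$ and graphs in $\fG^\kappa[p]$, so the limits exist.

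I expect the main obstacle to be the case-by-case tracking of the vector index under repeated Leray projections, i.e. controlling the off-diagonal contributions where $\hat\Lp(k)$ mixes the frame directions. Unlike the scalar stochastic Burgers setting of \cite{CGT24}, here each application of $\fA^N_\pm$ couples the components through $\hat\Lp$, so the naive componentwise recursion fails and one must show that the terms violating the diagonal structure either vanish in the limit or reassemble into the stated scalar factor. Making the use of the anti-symmetry $\fA^N_-=-(\fA^N_+)^*$ precise enough to collapse the two-leg contributions into the clean factor $|j_1|^2/|\bj|^2$, while simultaneously verifying that the putative constant is path-consistent across all $p\in\Pi^{(n)}_{a,1}$, is where the real work lies.
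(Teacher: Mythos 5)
There is a genuine gap, and it sits exactly where you flag ``the real work lies'' without supplying it: nothing in your argument actually produces the indicator $\1_{\bt'=\bt}$. Momentum conservation (via \eqref{eq:commute}) and orthonormality do force $\bj'=\bj$, but the pairing in the frame indices yields a priori a full $(d-1)\times(d-1)$ matrix $B(t_1',t_1):=\lim_N\sum_{p,g}\langle\sigma_{-j_1,t_1'},\fT^N_p[g]\sigma_{j_1,t_1}\rangle_{L^2_1}$, and your rotational-covariance step (aligning $j_1$ with a reference direction) only shows $B$ is independent of $j_1$ --- it does not diagonalize $B$, since after the alignment one still faces an arbitrary matrix in the standard frame. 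The paper kills the off-diagonal entries by combining two separate inputs, neither of which appears in your proposal: (a) the path-reversal bijection $p\mapsto f(p)=(p_a,\dots,p_0)$ on $\Pi^{(n)}_{a,1}$ together with $(T^{N,+})^*=-T^{N,-}$ and the fact that $a$ is even (so the pairings are real), which makes $\sum_p\fT^N_p$ Hermitian and hence $B(t_1',t_1)=B(t_1,t_1')$; and (b) a \emph{sign-flipping} rotation $U$ with $Ua_{j_1,i}=a_{j_1,r}$, $Ua_{j_1,r}=-a_{j_1,i}$, $Uj_1=j_1$, which --- using that the limiting integrands are built from inner products of the frame vectors and the integration variables, and that the factor $W^2_g$ is \emph{linear} in its frame-vector argument --- gives $B(i,r)=-B(r,i)$. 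Only the conjunction of symmetry and this antisymmetry forces $B(i,r)=0$ for $i\neq r$; a further sign-flip rotation gives $B(i,i)=B(1,1)=:c(a,\lambda)$. You invoke the anti-symmetry $\fA^N_-=-(\fA^N_+)^*$ but attach it to the two-leg factor $|j_1|^2/|\bj|^2$ (which in fact drops out of the kernel prefactor computed in the analogue of Lemma \ref{lem5.8}), not to the Hermitian-symmetry step where it is actually needed.

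A secondary but real issue is your justification of the $N\to\infty$ limit: the uniform operator bounds \eqref{e:TboundNew} and the graded sector condition control the inner products uniformly in $N$, but they do not justify passing to the limit in the Riemann sums over the internal momenta $y_{1:M}\in(N^{-1}\ZZ_0^d)^M$, whose integrands are singular ratios $I_g/Q_g$. The paper needs genuine uniform integrability, obtained by comparing the kernels with those of the modified operators $\fA^{N,\delta}_\pm$ for $1<\delta<d/2$ (the pointwise domination $|P_g||G_g|\le P^1_g$ of Lemma \ref{lem5.8} and the bound \eqref{e:delta}); your proposal replaces this with uniform boundedness, which is insufficient. The remaining structural elements of your plan --- the inductive case-by-case tracking of the frame vector under $\hat\Lp$, the reduction of $\kappa=2$ to $\kappa=1$ via the correspondence $g\mapsto(\hat g,\tilde g)$, and the rotational argument for $j_1$-independence --- do match the paper's route.
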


{By Proposition \ref{prop:limite} and similar arguments as \cite{CGT24}, we can prove Theorem \ref{thm:F-D}. We put details in Appendix \ref{appendixB}.}

\subsection{Proof of Proposition \ref{prop:limite}}\label{sec4.2}

{The main difficulty in proving Proposition \ref{prop:limite} comes from the Leray projection and vector-valued nature of the equation \eqref{eq:scaled-NS}. In the following, we decompose the action of Leray projection appearing in $\fA^N_+$ and $\fA_-^N$ in \eqref{3.10} and \eqref{3.11} on a vector into two parts: the vector itself and a new vector times its inner product with the original one.
This decomposition produces multiple cases for the basis element	$a_{j_1,t_1}$ and leads to mixed terms involving $a_{j_1,t_1}$ and $a_{j_1,t'_1}$ in $\lim_{N\to \infty}\langle{\tilde\sigma}_{-j'_1,t'_1},\fT^N_p[g]{\tilde\sigma}_{j_1,t_1}\rangle_{L^2_{m_\kappa}}$, whereas the desired limiting decoupled stochastic heat equation corresponds to $\1_{t_1=t'_1}$ multiplied by a constant.
To address these problems, 
we perform a case-by-case analysis in Lemma \ref{lem5.8} depending on how the Leray projection acts, in order to track the change of $a_{j_1,t_1}$.
In the proof of Proposition \ref{prop:limite}, we use the Hermitian property of $\sum_{p\in\Pi_{a,1}^{(n)}}\fT_p^N$, and perform a rotational change of coordinates in the integral to get the desired symmetry in $t_1$ and $t'_1.$ 
}

{In \cite{CGT24}, the authors introduced $\fA^{N,\delta}_{\pm}$ for a uniform integrability type condition. In this case, due to 
Leray projection, we have to replace $\fA^{N,\delta}_{\pm}$ by the following operators on $L^2(\TT^d,\RR^d)^{\otimes n}.$
For fixed $j_1\in\ZZ_0^d$, $\delta\geq1,$ $f\in L^2(\TT^d,\RR^d)^{\otimes n},$ $1\leq i\neq i'\leq n+1$, and $1\leq q\leq n-1,$ set
\begin{align*}
	\fF(\fA_+^{N,\delta}[(i,i')]f)(l_{1:n+1},k_{1:n+1}) :=&\lambda N^{-\frac d2} | N2\pi\iota|^{\delta}\fR_{k_{i},k_{i'}}^N|k_i+k_{i'}|^{\delta}\frac{\sum_{u=1}^{d-1}a_{j_1,u}(l_i)}{\sqrt{d-1}}\frac{\sum_{u=1}^{d-1}a_{j_1,u}(l_{i'})}{\sqrt{d-1}}\times\\
	&|\hat f((\cdot,k_i+k_{i'}),(l_{1:n+1\backslash i,i'},k_{1:n+1\backslash i,i'}))|,\\
	\fF(\fA_-^{N,\delta}[q]f)(l_{1:n-1},k_{1:n-1}) :=&\lambda N^{-\frac d2} |N2\pi\iota|^{\delta}
	\sum_{l+m=k_q}\fR_{l,m}^N|k_q|^{\delta}\left|\hat f((l_q,l),(\cdot,m),(l_{1:n-1\backslash q},k_{1:n-1\backslash q}))\right|\,,
\end{align*}
and \begin{align*}
	{T^{N,\delta,+}}[(i,i')]:=(-\fL_0)^{-\frac{\delta}{2}}\fA_+^{N,\delta}[(i,i')](-\fL_0)^{-\frac{\delta}{2}},\quad
	{T^{N,\delta,-}}[q]:=(-\fL_0)^{-\frac{\delta}{2}}\fA_-^{N,\delta}[q](-\fL_0)^{-\frac{\delta}{2}}.
\end{align*}
We also set $\fT_p^{N,\delta}[g]:=T^{N,\delta,\sigma_a}[g_a]\cdots T^{N,\delta,\sigma_1}[g_1]$. 
Arguing as the proof of \cite[Lemma 2.14]{CGT24}, we can derive that  
\begin{align}\label{e:delta}
	\|T^{N,\delta,+}[(i,i')] f\|_{L^2_{n+1}}\vee \|T^{N,\delta,-}[q] f\|_{L^2_{n-1}}\lesssim_\delta\|f\|_{L^2_n}
\end{align}
for $f\in (L^2(\TT^d,\RR^d))^{\otimes n},$ and $1\leq\delta<d/2$.
The proof for the uniform integrability of the integrand in $\langle{\tilde\sigma}_{-\bj',\bt'},\fT^N_p[g]{\tilde\sigma}_{\bj,\bt}\rangle_{L^2_{m_\kappa}}$ proceeds in two steps.
First, we will show that the integrand is bounded by that appearing in $\langle{\tilde\sigma}_{-\bj',\bt'},\fT^{N,\delta}_p[g]{\tilde\sigma}_{\bj,\bt}\rangle_{L^2_{m_\kappa}}$ for $\delta=1$. This requires a comparison between the Fourier transforms of the kernels of $\fT^N_p[g]{\tilde\sigma}_{\bj,\bt}$ and $\fT^{N,1}_p[g]{\tilde\sigma}_{\bj,\bt}$ (see Lemma \ref{lem5.8} below).
The second step is to prove the uniform-integrability of this latter integrand by using operators $\fT_p^{N,\delta}[g]$ for $\delta>1$.}

In the following, we first concentrate on the Fourier transform of the kernel of $\fT^N_p[g]{\tilde\sigma}_{\bj,\bt}$, which we denote as 
$f_g^N(l_{1:m_\kappa},k_{1:m_\kappa};\bj,\bt).$ It has the form
\begin{align}\label{eq:kernelf}
	f_g^N(l_{1:m_\kappa},k_{1:m_\kappa};\bj,\bt)= N^{\frac d2(1-m)}F^N_g(l_{1:m_\kappa},N^{-1} k_{1:m_\kappa};N^{-1} \bj,\bt)\,.
\end{align} 
Also, for $\delta\geq1$, let $f^{N,\delta}_g$ be the Fourier transform of $\fT_p^{N,\delta}[g]\tilde\sigma_{\bj,\bt}$, and $F_g^{N,\delta}(l_{1:m_\kappa},N^{-1}k_{1:m_\kappa};N^{-1}\bj,\bt)$ be defined similarly through \eqref{eq:kernelf}.
In the following lemma, we derive expressions for $F^N_g$ and $F_g^{N,\delta}$ in terms of ratios of polynomials, and compare the terms in $F^N_g$ and $F_g^{N,1}$.
{Since the solution to \eqref{eq:scaled-NS} is vector-valued and divergence-free, the operator $\fA^N_+$ in \eqref{3.10} 
involves the Leray projection acting on vectors, which distinguishes our proof from the stochastic Burgers equation case in \cite[Lemma 2.19]{CGT24}. As mentioned before, the Leray projection in \eqref{3.10} has two effects: it either preserves the vector itself or generates a new vector times its inner product with the original one.
Thus, in order to track the evolution of the basis element $a_{j_1,t_1}$, we need to discuss different cases during the iteration.
Also, the operator $\fA^N_-$ in \eqref{3.11} has the Leray projection on the first vector component, and an inner-product on the second component, which means this action could
preserve a vector, produce a new one via inner product with the original vector, or yield a scalar by taking the inner product between two vectors. Hence, it also requires a case-by-case analysis.
To this end, compared with \cite{CGT24}, we introduced extra $G_g$ term to track the change of $a_{j_1,t_1}$ during the iteration.
}

\begin{lem}\label{lem5.8}
	Let $a\geq1,\, 1\leq m\leq a\wedge n$, $p\in\Pi_{a,m}^{(n)}$, $\delta\geq1$, and
	$M=(1+a-m)/2$ be the number of $T^{N,-}$ $(T^{N,\delta,-})$ in the product $\fT^N_p$ $(\fT^{N,\delta}_p)$. 
	For $g\in\tilde\fG^2[p]$, the kernel $F_g^N$, defined in \eqref{eq:kernelf}, can be expressed as 
	\begin{align}
		&F_g^N(l_{1:m+1},x_{1:m+1};\bj,\bt)\nonumber\\
		=&\1_{\sum_{j\neq \pi(a)} x_j= j_1}\1_{x_{\pi(a)}=j_2} 
		\frac{|j_1|}{|\bj|}\frac{(\lambda\iota)^a }{|x_{1:m+1}|}a_{j_2,t_2}^{l_{\pi(a)}}\sum_{\substack{y_{1:M}\in (\frac1N \ZZ^d_0)^M,\\|y_i|_\infty\le 1, 1\leq i\le M}} \frac{N^{-dM}I_g(y_{1:M};x_{1:m+1\setminus\{\pi(a)\}})}{Q_g(y_{1:M};x_{1:m+1})}
		\times\nonumber\\
		&P_g(l_{1:m+1\backslash\{c_1b_1,\pi(a)\}},y_{1:M};x_{1:m+1\setminus\{\pi(a)\}},\frac{j_1}{|j_1|})G_g(y_{1:M};x_{1:m+1\setminus\{\pi(a)\}},a_{j_1,t_1})\,
		\label{eq:formkernelG},
	\end{align}
	where $|x_{1:m+1}| = \sqrt{\sum_{i=1}^{m+1}|x_i|^2}$, and
	\begin{align}\label{*}
		G_g=&c_1(a_{j_1,t_1}+R^1_g)(l_{b_1})+(1-c_1)R^2_g \,.
	\end{align}
	Here $b_1$ belongs to $1:m+1\backslash \{\pi(a)\}$, $c_1=0$ or 1, and
	\begin{enumerate}[noitemsep]
		\item  [(i)] $I_g$ is a product of  indicator functions, each imposing that certain linear combinations of its arguments have $|\cdot|_\infty$ in $(0,1]$,
		
		{\item [(ii)] $Q_g$ is a homogenous real-valued polynomial of degree $2(a-1)$ 
		in $y_{1:M};x_{1:m+1}$, and satisfies $$Q_g(y_{1:M};x_{1:m+1})=Q_g(Uy_1,\cdots,Uy_M;Ux_1,\cdots,Ux_{m+1})$$ for any unitary matrix $U$. Moreover, $Q_g\ge0$ and it does not vanish on the support of $I_g$,}
		
		{\item [(iii)] $P_g(l_{1:m+1\backslash\{c_1b_1,\pi(a)\}})=\prod_{k\in \{1:m+1\backslash\{c_1b_1,\pi(a)\}\}} P_g^{(k)}(l_k)$ is a real-valued function
		in $y_{1:M},$ $x_{1:m+1\backslash \{\pi(a)\}},$ $\frac{j_1}{|j_1|}$\,,
}		
		\item [(iv)] $R_g^1$ and $R_g^2$ are vector-valued and real-valued functions
		in $y_{1:M},x_{1:m+1\backslash\{\pi(a)\}},a_{j_1,t_1}$ respectively\,,
   
        \item [(v)] the terms in $R_g^1$, $R_g^2$ and $P_g^{(k)}$ for $k\in \{1:m+1\backslash\{c_1b_1,\pi(a)\}\}$ are in the form of the products of vectors in $V$ and $V'$ and inner-products with vectors in these sets, 
   	where $V$ is the set of vectors which are linear combinations of $y_{1:M},x_{1:m+1\backslash\{\pi(a)\}},\frac{j_1}{|j_1|}$, and $V'$ is the set of vectors in $V$ divided by its Euclidean norm\,,
 
		\item [(vi)] if $m=1$, then for almost every $y_{1:M}$, $Q_g(y_{1:M};0)>0$, and if $m\geq2$, the terms in $P_g(l_{1:m+1\backslash\{c_1b_1,\pi(a)\}},y_{1:M};x_{1:m+1\backslash\{\pi(a)\}},\frac{j_1}{|j_1|})$ and $G_g(y_{1:M};x_{1:m+1\backslash\{\pi(a)\}},a_{j_1,t_1})$ do not involve the vector $\sum_{i\neq\pi(a)}x_i$.
	\end{enumerate}
Similarly, the kernel $F_g^{N,\delta}$ can be written as  
	\begin{align}
		&F_g^{N,\delta}(l_{1:m+1},x_{1:m+1};\bj,\bt)\nonumber\\
		=&\1_{\sum_{j\neq \pi(a)} x_j= j_1}\1_{x_{\pi(a)}=j_2} 
		\frac{|j_1|^\delta}{|\bj|^\delta}\frac{\lambda^a }{|x_{1:m+1}|^\delta}a_{j_2,t_2}^{l_{\pi(a)}}\sum_{\substack{y_{1:M}\in (\frac1N \ZZ^d_0)^M:\\|y_i|_\infty\le 1, i\le M}} \frac{N^{-dM}((P_g^1)^\delta I_g)(y_{1:M};x_{1:m+1\setminus\{\pi(a)\}})}{Q_g^\delta(y_{1:M};x_{1:m+1})}
		\times\nonumber\\
		&\prod_{k\in\{1:m+1\backslash\{\pi(a)\}\}}\frac{1}{\sqrt{d-1}}\left(a_{j_1,1}+\cdots+a_{j_1,d-1}\right)(l_{k})\,.\label{eq:formkernelG1}
	\end{align}
	Here $I_g$ and $Q_g$ are the same as the ones in \eqref{eq:formkernelG}, and $P_{g}^1$ is the product of scalars depending on $y_{1:M},x_{1:m+1\setminus\{\pi(a)\}}$, satisfying that 
	\begin{align}\label{eq:bound}
		|P_g||G_g|\leq P^1_g.
	\end{align}
	For $c_1=1,$ $|G_g|$ denotes the norm of $a_{j_1,t_1}+R_g^1$. 
	
	Finally, if $j_1\in\ZZ_0^d,\, t_1\in\{1,\cdots,d-1\}$ and $g\in\fG^1[p]$, then the analog of \eqref{eq:formkernelG} $($resp.\eqref{eq:formkernelG1}$)$ holds upon setting $\bj=j_1,\, \bt=t_1$, removing the dependence of $j_2,\, t_2$ and $\pi(a)$, replacing both $x_{1:m+1}$ and $x_{1:m+1\backslash\{\pi(a)\}}$ with $x_{1:m}$ in \eqref{eq:formkernelG} $($resp.\eqref{eq:formkernelG1}$)$.
\end{lem}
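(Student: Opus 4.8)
The plan is to argue by induction on $a$, the number of factors in $\fT^N_p[g]$, processing the operators from right to left: I apply $T^{N,\sigma_1}[g_1]$ to the seed first and $T^{N,\sigma_a}[g_a]$ last. The inductive hypothesis is that after $s$ steps the Fourier transform of $T^{N,\sigma_s}[g_s]\cdots T^{N,\sigma_1}[g_1]\tilde\sigma_{\bj,\bt}$ has the shape of the right-hand side of \eqref{eq:formkernelG}, with $m+1$ replaced by the current number of slots $p_s+\kappa-1$ and $M$ replaced by the number $M_s$ of annihilation operators used up to step $s$. The base case $a=0$ is the seed $\tilde\sigma_{\bj,\bt}$ itself, whose kernel is $a_{j_1,t_1}$ on the momentum $j_1$ (tensored symmetrically with $a_{j_2,t_2}$ on $j_2$ when $\kappa=2$), and this is trivially of the claimed form with $G_g=a_{j_1,t_1}$, $c_1=1$, $P_g=1$, $Q_g=1$, $I_g=1$.

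Each inductive step inserts one operator using the explicit kernels \eqref{Aq} together with its two surrounding factors $(-\fL_0)^{-1/2}$. For a creation step ($\sigma_s=+$) the fixed vector $\hat\Lp(k_i)(k_i+k_{i'})$ in slot $i$ and the operator $\hat\Lp(k_{i'})$ hitting the tracked slot are both expanded through $\hat\Lp(k)v=v-\tfrac{k\cdot v}{|k|^2}k$; the first produces a known vector in the class $V$ (contributing a scalar factor $P_g^{(i)}$ after reading component $l_i$), while the second leaves the tracked vector as $a_{j_1,t_1}+R^1_g$ with $R^1_g=-\tfrac{(k_{i'}\cdot a_{j_1,t_1})}{|k_{i'}|^2}k_{i'}\in V$, i.e.\ the branch $c_1=1$. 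For an annihilation step ($\sigma_s=-$) the sum $\sum_{l+m=k_q}$ introduces exactly one new free internal momentum which, after the rescaling $k=Nx$, $l=Ny$, becomes a coordinate $y_{M_s}\in\tfrac1N\ZZ^d_0$ carrying the Riemann-sum weight $N^{-d}$; the factor $\sum_{i,t}k_q^i\hat\Lp_{l_q,t}(k_q)$ applies $\hat\Lp(k_q)$ (read in the $l_q$ component) to one merged slot and takes the inner product with $k_q$ in the other. It is precisely this inner product that, when it falls on the tracked slot, converts $a_{j_1,t_1}$ into a scalar and switches us to the branch $c_1=0$ with $G_g=R^2_g$. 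Collecting the $a$ coupling factors $\lambda_N2\pi\iota=\lambda N^{1-d/2}2\pi\iota$ against the $2\pi$ of the $\fL_0$ eigenvalues and counting the net change of slots yields the prefactor $N^{\frac d2(1-m)}(\lambda\iota)^a$ and the internal weight $N^{-dM}$ in \eqref{eq:kernelf}--\eqref{eq:formkernelG}.

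The remaining bookkeeping splits into four independent pieces. For the momenta: since the momentum operators commute with $\fL_0,\fA^N_\pm$ by \eqref{eq:commute}, total momentum is conserved; and because $g\in\tilde\fG^2[p]$ has a connected component that is a single path $\pi$ with no branching or merging, the second tensor slot is transported inertly, producing $\1_{x_{\pi(a)}=j_2}$, the factor $a_{j_2,t_2}^{l_{\pi(a)}}$, and $\1_{\sum_{j\neq\pi(a)}x_j=j_1}$ (for $g\in\fG^1[p]$ this component is simply absent and one sets $\bj=j_1,\bt=t_1$). For the denominators: the $a-1$ internal factors $(-\fL_0)^{-1}$ each contribute the squared Euclidean norm of a linear combination of momenta, producing the homogeneous degree-$2(a-1)$ polynomial $Q_g$, which is therefore invariant under a common unitary rotation of all its arguments and strictly positive on the support of the $|\cdot|_\infty$-cutoffs that I collect separately into $I_g$; the two outermost $(-\fL_0)^{-1/2}$ together with the seed give the explicit prefactors $\tfrac{|j_1|}{|\bj|}$ and $\tfrac1{|x_{1:m+1}|}$. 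For the vector data: I record at each step whether the tracked image of $a_{j_1,t_1}$ survives as a genuine vector (branch $c_1=1$, contributing $a_{j_1,t_1}+R^1_g$ in slot $l_{b_1}$) or has been turned into a scalar (branch $c_1=0$, contributing $R^2_g$), while every other slot accumulates the scalar factors $P_g^{(k)}$; by the two expansions above every vector produced is a linear combination of $y_{1:M},x_{1:m+1\setminus\{\pi(a)\}},\tfrac{j_1}{|j_1|}$ (the set $V$) or such a vector normalized (the set $V'$), which is exactly (iv)--(v). Property (vi) follows by substituting the conserved total momentum $\sum_{i\neq\pi(a)}x_i=j_1$ everywhere except in the explicit $|x_{1:m+1}|$ and the constraint.

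Finally, the $\delta$-version \eqref{eq:formkernelG1} is produced by running the identical induction on the dominating operators $\fA^{N,\delta}_\pm$, in which every Leray matrix is replaced by the normalized worst case $\tfrac1{\sqrt{d-1}}(a_{j_1,1}+\cdots+a_{j_1,d-1})$ and every vector/inner-product factor by its modulus; the bound \eqref{eq:bound}, $|P_g||G_g|\le P^1_g$, then follows termwise from $\|\hat\Lp(k)\|\le1$ and $|a_{j_1,u}|=1$, since each factor in $P_g$ and $G_g$ is dominated by the corresponding product of norms collected in $P^1_g$. I expect the genuine difficulty to lie in the vector case analysis: because $\hat\Lp$ mixes an identity and a rank-one part, each operator application a priori doubles the number of terms, and the delicate point is to verify inductively that all of them remain inside the structured class (iv)--(v) and that the splitting into the $c_1=0$ and $c_1=1$ branches is consistent across both creation and annihilation steps. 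The scalar Burgers computation in \cite[Lemma 2.19]{CGT24} controls only the $P_g/Q_g$ part; the $G_g,R^1_g,R^2_g$ bookkeeping is exactly the new ingredient forced by the Leray projection.
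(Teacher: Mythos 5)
Your proposal follows essentially the same route as the paper's proof: induction on $a$, appending the leftmost operator $T^{N,\sigma_{a+1}}[g_{a+1}]$ to $\fT^N_{p'}[g']$; a case split according to whether the Leray projection (creation step) or the projection/inner-product pair (annihilation step) lands on the tracked slot carrying $a_{j_1,t_1}$ or on a $P$-slot, which is exactly the paper's cases 1.1--1.2 and 2.1--2.3 and its $c_1=1$ versus $c_1=0$ branches; the Riemann normalization $N^{-d}$ per merging producing the $y$-variables; unitary invariance of $Q_g$; inert transport of the $\pi$-component for $g\in\tilde\fG^2[p]$; and domination by the $\delta$-operators to get \eqref{eq:bound} via $|\hat\Lp(x)y|\le |y|$ and Cauchy--Schwarz.

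Two points in your sketch are thinner than what the lemma needs. First, your one-step update of the tracked vector, $R^1_g=-\frac{(k_{i'}\cdot a_{j_1,t_1})}{|k_{i'}|^2}k_{i'}$, is only correct at the seed; in the inductive step the slot carries $a_{j_1,t_1}+R^1_{g'}$, so the recursion must be $R^1_g=-\frac{x_{i'}\cdot a_{j_1,t_1}}{|x_{i'}|^2}x_{i'}+\hat\Lp(x_{i'})R^1_{g'}$ (and in the merging cases $\hat\Lp(x_q)R^1_{g'}$, resp.\ $R^2_g=x_q\cdot a_{j_1,t_1}+x_q\cdot R^1_{g'}$ when the inner product hits the tracked slot); if the accumulated remainder is not itself projected, preservation of the class (iv)--(v) fails. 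Second, your justification of (vi) by ``substituting the conserved total momentum'' is not a proof. The paper's argument for $m\geq2$ is structural: every vector newly produced at a step is among $x_i+x_{i'}$, $x_q$, $y_M$, $x_q-y_M$, and since the step lands in at least $m+1\geq4$ (branching) resp.\ $m-1\geq3$ (merging) slots, none of these can coincide with $\sum_{i\neq\pi(a)}x_i$; and for $m=1$ the almost-everywhere positivity $Q_g(y_{1:M};0)>0$ is imported from \cite[Lemma 2.19]{CGT24} --- your ``positive on the support of $I_g$'' is item (ii), not (vi), since (vi) concerns the degenerate point $x=0$ that arises in the $N\to\infty$ limit of Lemma \ref{lem5.9}. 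Both repairs fit inside your scheme, so the architecture of the proof stands.
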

\begin{proof}
	We only prove the case $\bj=j_{1:2}$ via induction on $|p|=a\ge1$. The argument for $\bj=j_1$ is similar. 
	For $a=1$ and $p\in \Pi^{(n)}_{1,m}$, 
	it follows that $m=2,\, M=0$, $\sigma_1=+$ and $g_1=(i,i')$ for some $1\le i\neq i'\le 3$. 
	By the definition of $T^{N,+}[(i,i')]$ and $a_{N^{-1}j,t}=a_{j,t}$ for every $j,t$, 
	we obtain that
	\begin{align}
		F^N_{g_1}(l_{1:3},k_{1:3};\bj,\bt)
		=&\frac{\lambda \iota}{2\pi|k_{1:3}|}\frac{|j_1|}{|j_{1:2}|}\fR^1_{k_i,k_{i'}}\1_{\sum\limits_{j\neq \pi(a)} k_j= j_1}\1_{k_{\pi(a)}=j_2}a_{j_2,t_2}^{l_{\pi(a)}}\left(\hat\Lp(k_i)\frac{j_1}{|j_1|}\right)(l_i)\left(\hat\Lp(k_{i'})a_{j_1,t_1}\right)(l_{i'})\,.\label{induc}
	\end{align}
	This expression is of the form \eqref{eq:formkernelG}, 
	with $Q_{g_1}\equiv 1$, $I_{g_1}=\fR^1_{x_{i},x_{i'}}$, 
	$c_1=1,$ $b_1=i',$ $\{1:m+1\backslash\{c_1b_1,\pi(a)\}\}=\{i\},$ 
	$P_{g_1}=\frac{1}{2\pi}\hat\Lp(x_i)\frac{j_1}{|j_1|}$,
	and $G_{g_1} = (a_{j_1,t_1}+R_{g_1}^1)(l_{i'})$ with
	$R_{g_1}^{1}=-\frac{x_{i'}\cdot a_{j_1,t_1}}{|x_{i'}|^2}x_{i'},$ 
	and $(i)-(v)$ hold. 
	From the definition of $T^{N,\delta,+}[(i,i')]$, 
	we also obtain that 
	\begin{align}
		F^{N,\delta}_{g_1}(l_{1:3},k_{1:3};\bj,\bt)
		=&\frac{\lambda }{(2\pi)^\delta|k_{1:3}|^\delta}\frac{|j_1|^\delta}{|j_{1:2}|^\delta}\fR^1_{k_i,k_{i'}}\1_{\sum\limits_{j\neq \pi(a)} k_j= j_1}\1_{k_{\pi(a)}=j_2}a_{j_2,t_2}^{l_{\pi(a)}}
		\frac{\sum_{u=1}^{d-1}a_{j_1,u}(l_i)}{\sqrt{d-1}}
		\frac{\sum_{u=1}^{d-1}a_{j_1,u}(l_{i'})}{\sqrt{d-1}}.\label{induc'}
	\end{align}
 Since $P_{g_1}^1=\frac{1}{2\pi}$, and $G_{g_1}=(\Lp(x_{i'})a_{j_1,t_1})(l_{i'})$, we have $|P_{g_1}||G_{g_1}|\leq P^1_{g_1}.$
	
	As for the induction step, we assume that the statement is true for a given $a\ge1$. Now we 
	take $p\in\Pi^{(n)}_{a+1,m},\, g\in\widetilde\fG^2[p]$ and then $\fT^N_p[g]=T^{N,\sigma}[g_{a+1}]\mathcal T_{p'}^N[g']$ for some 
	$p'\in\Pi^{(n)}_{a,m-\sigma_{a+1}}$ and $g'\in \widetilde\fG^2[p']$. 
	{\bfseries\noindent\textit{1.The case $\sigma_{a+1}=+$.}}
	Let $i,\, i'$ be such that 
	$g_{a+1}=(i, i')$. By the definition of $T^{N,+}[(i,i')]$, we have 
	\begin{align}
		&\fF(T^{N,+}[(i,i')]\mathcal T_{p'}^N[g']{\tilde\sigma_{\bj,\bt}})(l_{1:m+1},k_{1:m+1};\bj,\bt)\nonumber\\
		=&\frac{\lambda N^{1-\frac{d}{2}+\frac d2(2-m)}\iota}{(2\pi)|k_{1:m+1}|}\frac{\fR^N_{k_i,k_{i'}}}{\sqrt{|k_i+k_{i'}|^2+|k_{1:m+1\backslash i,i'}|^2}}\times\nonumber\\
		&\left(\hat\Lp(k_i)k_{i'}\right)(l_i)\left(\hat\Lp(k_{i'})F_{g'}^N((\cdot,N^{-1}(k_i+k_{i'})),(l_{1:m+1\backslash i,i'},N^{-1}k_{1:m+1\backslash i,i'});N^{-1}\bj,\bt)\right)(l_{i'}).\label{eq:induc_+}
	\end{align}
	Similarly, by the definition of $T^{N,\delta,+}[(i,i')]$, we have 
	\begin{align}
		&\fF(T^{N,\delta,+}[(i,i')]\mathcal T_{p'}^N[g']{\tilde\sigma}_{\bj,\bt})(l_{1:m+1},k_{1:m+1};\bj,\bt)\nonumber\\
		=&\frac{\lambda N^{\delta-\frac{d}{2}+\frac d2(2-m)}}{(2\pi)^\delta|k_{1:m+1}|^\delta}\frac{\fR^N_{k_i,k_{i'}}}{(|k_i+k_{i'}|^2+|k_{1:m+1\backslash i,i'}|^2)^{\frac\delta2}}|k_i+k_{i'}|^\delta\times\nonumber\\
		&\frac{\sum_{u=1}^{d-1}a_{j_1,u}(l_i)}{\sqrt{d-1}}\frac{\sum_{u=1}^{d-1}a_{j_1,u}(l_{i'})}{\sqrt{d-1}}|F_{g'}^N((\cdot,N^{-1}(k_i+k_{i'})),(l_{1:m+1\backslash i,i'},N^{-1}k_{1:m+1\backslash i,i'});N^{-1}\bj,\bt)|\,.
	\end{align}
	Recall that $\pi(a)$ is not a branching point. 
	The functions $I_g$ and $Q_g$ in $F_g^N$ and $F_g^{N,\delta}$ are the same:
	\begin{align*}
		I_g(y_{1:M};x_{1:m+1\setminus\{\pi(a+1)\}})=&\fR_{x_i,x_{i'}}^1I_{g'}(y_{1:M};x_i+x_{i'},x_{1:m+1\setminus\{i,i',\pi(a+1)\}})\,,\nonumber\\
		{Q_g(y_{1:M};x_{1:m+1})}=&(|x_i+x_{i'}|^2+|x_{1:m+1\backslash i,i'}|^2)Q_{g'}(y_{1:M};x_i+x_{i'},x_{1:m+1\setminus\{i,i'\}})\,.
	\end{align*}
	For the kernel $F_g^{N,\delta}$, we have
	\begin{align}
		P^1_g(y_{1:M};x_{1:m+1\backslash\{\pi(a+1)\}})=\frac{1}{2\pi}|x_i+x_{i'}|P^1_{g'}(y_{1:M};x_i+x_{i'},x_{1:m+1\setminus\{i,i',\pi(a+1)\}})\,.\label{induc_+1'}
	\end{align}
It remains to determine $P_g$ and $G_g$ for $F_g^N.$
 By \eqref{eq:induc_+}, the Leray matrix $\hat\Lp(k_{i'})$ acts on the first component of $F_{g'}^N$, which could be $P_{g'}^{(1)}$ or $G_{g'}$, corresponding to the following case 1.1 and case 1.2 respectively.
	
	{\bfseries\noindent\textit{1.1.The case $c'_1b'_1>1$ or $c'_1=0$.}}
	In this case, the first component of $F_{g'}^N$ is $P_{g'}^{(1)}$. Thus by \eqref{eq:induc_+}, we have
	\begin{align}
		P_g^{(i)}(y_{1:M}; x_{1:m-1\backslash\{\pi(a+1)\}},\frac{j_1}{|j_1|}) =& \frac{1}{2\pi}\hat\Lp(x_i)x_{i'},\nonumber\\
		P_g^{(i')}(y_{1:M}; x_{1:m-1\backslash\{\pi(a+1)\}},\frac{j_1}{|j_1|}) =& \,\hat\Lp(x_{i'})P_{g'}^{(1)},\nonumber\\
		\bigotimes_{k\in\{1:m+1\backslash\{i,i',c_1b_1,\pi(a+1)\}\}}P_{g}^{(k)}(y_{1:M}; x_{1:m-1\backslash\{\pi(a+1)\}},\frac{j_1}{|j_1|}) =& \bigotimes_{k'\in\{2:m\backslash\{c'_1b'_1,\pi(a)\}\}}P_{g'}^{(k')},\label{induc_+1}
	\end{align}
and $G_g$ is given by \eqref{*} with $c_1=c'_1$, $c_1b_1$ belonging to $(\{1:m+1\}\backslash\{i,i',\pi(a+1)\})\cup\{0\},$ and
\begin{align}
		R^i_g(y_{1:M}; x_{1:m-1\backslash\{\pi(a+1)\}},a_{j_1,t_1}) =& R^i_{g'} (y_{1:M};x_i+x_{i'},x_{1:m+1\setminus\{i,i',\pi(a+1)\}},a_{j_1,t_1}),\, i=1,2,\label{induc_+1.1}
\end{align}
	where $\bigotimes$ denotes the tensor product of vectors, and the omitted arguments of $P_{g'}^{(1)}$ and $P_{g'}^{(k')}$ are $(y_{1:M};x_i+x_{i'},x_{1:m+1\setminus\{i,i',\pi(a+1)\}},\frac{j_1}{|j_1|})$. 
	Since $\hat\Lp(x_i)x_{i'}=\hat\Lp(x_i)(x_i+x_{i'})$ and $|\hat\Lp(x)y|\leq|y|$ for all $x,\, y\in\ZZ_0^d$, by \eqref{induc_+1} it holds that
	\begin{align}
		|P_g||G_g|\leq\frac{1}{2\pi}|x_i+x_{i'}||P_{g'}||G_{g'}|\label{5.46}
	\end{align} 
	with the arguments omitted.
	By the induction hypothesis, the right hand side of the above equation is bounded by  $\frac{1}{2\pi}|x_i+x_{i'}|P^1_{g'}$, which implies \eqref{eq:bound} by \eqref{induc_+1'}.
	
	{\bfseries\noindent\textit{1.2.The case $c'_1b'_1=1$.}} 
	In this case, the first component of $F_{g'}^N$ is $G_{g'}$, which is given by
	$(a_{j_1,t_1}+R_{g'}^1)(l_1)=:\tilde G_{g'}(l_1).$
	Similarly to the previous case, we have
		\begin{align}
		P_g^{(i)}(y_{1:M}; x_{1:m-1\backslash\{\pi(a+1)\}},\frac{j_1}{|j_1|}) =& \frac{1}{2\pi}\hat\Lp(x_i)x_{i'},\nonumber\\
		\bigotimes_{k\in{1:m+1\backslash\{i,i',\pi(a+1)\}}}P_{g}^{(k)}(y_{1:M}; x_{1:m-1\backslash\{\pi(a+1)\}},\frac{j_1}{|j_1|})=&\bigotimes_{k'\in2:m\backslash\{\pi(a)\}}P_{g'}^{(k')},
	\end{align}
	and $G_g$ is given by \eqref{*} with $c_1=1$, $b_1=i'$ and
	\begin{align}
			R_g^1(y_{1:M}; x_{1:m-1\backslash\{\pi(a+1)\}},a_{j_1,t_1})=
		-\frac{x_{i'}\cdot a_{j_1,t_1}}{|x_{i'}|^2}{x_{i'}}+\hat\Lp(x_{i'})R_{g'}^1(y_{1:M};x_i+x_{i'},x_{1:m+1\setminus\{i,i',\pi(a+1)\}},a_{j_1,t_1}),\label{induc_+3.1}
	\end{align}
	where the omitted argument of $P_{g'}^{(k')}$ is $(l_{1:m+1\backslash\{i,i',c_1b_1,\pi(a+1)\}},y_{1:M};x_i+x_{i'},x_{1:m+1\setminus\{i,i',\pi(a+1)\}},\frac{j_1}{|j_1|})$.
	Similar as the previous case, since $|G_g|=|\Lp(x_{i'})\tilde G_{g'}|\leq|G_{g'}|$, 
	\eqref{eq:bound} holds.
	
	{\bfseries\noindent\textit{2.The case $\sigma_{a+1}=-$.}}
	In this case, there exists $q$ such that $g_{a+1}=q$. By the definition of $T^{N,-}[q]$, we have
 \begin{align}
		&F^N_{g}(l_{1:m-1},k_{1:m-1};\bj,\bt)\nonumber\\
		=&\frac{\lambda\iota \1_{0<|k_q|_\infty\leq1}}{2\pi|k_{1:m-1}|}
		\sum_{\substack{y\in N^{-1}\ZZ_0^d:\\|y|_\infty\leq1}}\1_{0<|k_q-y|_\infty\leq1}\frac{\sum_{i}k_q^i\left({\hat{\Lp}(k_q)}F^N_{g'}((\cdot,y),(i,k_q-y),(l_{1:m-1\backslash q},k_{1:m-1\backslash q}))\right)(l_q)}{\sqrt{|y|^2+|k_q-y|^2+|k_{1:m-1\backslash q}|^2}}.\label{eq:induc_-}
	\end{align}
	Similarly, by the definition of $T^{N,\delta,-}[q]$, we have
	\begin{align}
		&F^{N,\delta}_{g}(l_{1:m-1},k_{1:m-1};\bj,\bt)\nonumber\\
		=&\frac{\lambda}{(2\pi)^\delta|k_{1:m-1}|^\delta}\1_{0<|k_q|_\infty\leq1}|k_q|^\delta
		\sum_{\substack{y\in N^{-1}\ZZ_0^d:\\|y|_\infty\leq1}}\1_{0<|k_q-y|_\infty\leq1}\frac{\left|F^N_{g'}((l_q,y),(\cdot,k_q-y),(l_{1:m-1\backslash q},k_{1:m-1\backslash q}))\right|}{(|y|^2+|k_q-y|^2+|k_{1:m-1\backslash q}|^2)^{\frac\delta2}}.
	\end{align}
	Recall that $\pi(a)$ is not a merging point. 
	The functions $I_g$ and $Q_g$ are the same in $F_g^N$ and $F_g^{N,1}$:
	\begin{align*}
		I_g(y_{1:M};x_{1:m-1\setminus\{\pi(a+1)\}})=&\1_{0<|x_q|_\infty\leq1,0<|y_M-x_q|_\infty\leq1}I_{g'}(y_{1:M-1};y_M,x_q-y_M,x_{1:m-1\setminus\{q,\pi(a+1)\}})\,,\nonumber\\
		{Q_g(y_{1:M};x_{1:m-1})}=&(|y_M|^2+|x_q-y_M|^2+|x_{1:m-1\backslash q}|^2)Q_{g'}(y_{1:M-1};y_M,x_q-y_M,x_{1:m-1\setminus\{q\}})\,.
	\end{align*}
	For the kernel $F_g^{N,\delta}$, since $\frac{1}{\sqrt{d-1}}|a_{j_1,1}+\cdots+a_{j_1,d-1}|=1$, we have
	\begin{align}
		P^1_g(y_{1:M};x_{1:m-1\setminus\{\pi(a+1)\}})=&\frac{1}{2\pi}|x_q|P^1_{g'}(y_{1:M-1};y_M,x_q-y_M,x_{1:m-1\setminus\{q,\pi(a+1)\}}).\label{induc_-1'}
	\end{align}
	For $F^N_{g}$, there are several cases depending on the indices in $F_{g'}^N$. By \eqref{eq:induc_-}, $\hat\Lp(k_q)$ and $k_q$ act on the first and the second components of $F^N_{g'}$, which could be $P_{g'}^{(1)},\, P_{g'}^{(2)},$ or $G_{g'}$.
	Now we discuss the different cases.
	
	{\bfseries\noindent\textit{2.1.The case $c'_1b'_1>2$ or $c'_1=0$.}}
	In this case, the first and the second components of $F^N_{g'}$ are $P_{g'}^{(1)}$ and $P_{g'}^{(2)}$ respectively. Thus we have
	\begin{align}
	P_g^{(q)}(y_{1:M}; x_{1:m-1\backslash\{\pi(a+1)\}},\frac{j_1}{|j_1|}) =&\frac{1}{2\pi}\left(x_q\cdot P_{g'}^{(2)}\right)\hat\Lp(x_q)P_{g'}^{(1)} ,\nonumber\\
	\bigotimes_{k\in{1:m-1\backslash\{c_1b_1,q,\pi(a+1)\}}}P_{g}^{(k)}(y_{1:M}; x_{1:m-1\backslash\{\pi(a+1)\}},\frac{j_1}{|j_1|})=&\bigotimes_{k'\in3:m\backslash\{c'_1b'_1,\pi(a)\}}P_{g'}^{(k')},\label{induc_-1}
\end{align}
and $G_g$ is given by \eqref{*} with $c_1=c'_1$, $c_1b_1$ belonging to $(\{1:m-1\}\backslash\{q,\pi(a+1)\})\cup\{0\},$ and
\begin{align}
	R^i_g(y_{1:M-1};y_M,x_q-y_M,x_{1:m-1\setminus\{q,\pi(a+1)\}},a_{j_1,t_1}) =& R^i_{g'} (y_{1:M-1};x_{1:m\setminus\{\pi(a)\}},a_{j_1,t_1}),\, i=1,2,\label{induc_-1.1}
\end{align}
	where 
	the omitted arguments of $P_{g'}^{(k')}$ are $(y_{1:M-1};y_M,x_q-y_M,x_{1:m-1\setminus\{q,\pi(a+1)\}},\frac{j_1}{|j_1|})$. By the Cauchy-Schwarz inequality and $|\Lp(x)y|\leq|y|$ for all $x,\, y\in\ZZ_0^d$, \eqref{induc_-1} implies that
	\begin{align}
		|P_g||G_g|\leq\frac{1}{2\pi}|x_q||P_{g'}||G_{g'}|\label{5.55_1}
	\end{align} 
	with the argments omitted.
	By the induction hypothesis, the right hand side of the above is bounded by  $\frac{1}{2\pi}|x_q|P^1_{g'}$, which implies \eqref{eq:bound} by \eqref{induc_-1'}.
	
	{\bfseries\noindent\textit{2.2.The case $c'_1b'_1=1$.}} 
	In this case, the first component of $F_{g'}^N$ is $G_{g'}$, and the second one is $P_{g'}^{(2)}$. $G_{g'}$ is given by
	$(a_{j_1,t_1}+R_{g'}^1)(l_1)=:\tilde G_{g'}(l_1).$
	Thus we have
		\begin{align}
			\bigotimes_{k\in{1:m-1\backslash\{q,\pi(a+1)\}}}P_{g}^{(k)}(y_{1:M}; x_{1:m-1\backslash\{\pi(a+1)\}},\frac{j_1}{|j_1|})&=\frac{1}{2\pi}\left(x_q\cdot P_{g'}^{(2)}\right)\bigotimes_{k'\in3:m\backslash\{\pi(a)\}}P_{g'}^{(k')},\label{induc_-2}
		\end{align}
		and $G_g$ is given by \eqref{*} with $c_1=1,$ $b_1=q$ and
		\begin{align}
			R^1_g(y_{1:M-1};y_M,x_q-y_M,x_{1:m-1\setminus\{q,\pi(a+1)\}},a_{j_1,t_1}) =& -\frac{x_{q}\cdot a_{j_1,t_1}}{|x_{q}|^2}{x_{q}}+\hat\Lp(x_{q})R^1_{g'},\label{induc_-2.1}
	\end{align}
	where the omitted argument of $P_{g'}^{(k')}$ is
	$(y_{1:M-1};y_M,x_q-y_M,x_{1:m-1\setminus\{q,\pi(a+1)\}},\frac{j_1}{|j_1|})$ and of $R^1_{g'}$ is $(y_{1:M-1};y_M,x_q-y_M,x_{1:m-1\setminus\{q,\pi(a+1)\}},a_{j_1,t_1})$. In this case, $c_1=1,$ and $b_1=q$. 
	Since $|G_g|=|\Lp(x_{q})\tilde G_{g'}|\leq|G_{g'}|$,
	by \eqref{induc_-2}, \eqref{5.55_1} holds.
	Thus by the induction hypothesis and \eqref{induc_-1'}, we obtain \eqref{eq:bound} in this case.
	
	{\bfseries\noindent\textit{2.3.The case $c'_1b'_1=2$.} }
	The first component of $F_{g'}^N$ is $P_{g'}^{(1)}$, and the second one is $G_{g'}$. Then we have
	\begin{align}
				P_g^{(q)}(y_{1:M}; x_{1:m-1\backslash\{\pi(a+1)\}},\frac{j_1}{|j_1|}) =&\frac{1}{2\pi}\hat\Lp(x_q)P_{g'}^{(1)} ,\nonumber\\
			\bigotimes_{k\in{1:m-1\backslash\{q,\pi(a+1)\}}}P_{g}^{(k)}(y_{1:M}; x_{1:m-1\backslash\{\pi(a+1)\}},\frac{j_1}{|j_1|})=&\bigotimes_{k'\in3:m\backslash\{\pi(a)\}}P_{g'}^{(k')},\label{induc_-3}
		\end{align}
		and $G_g$ is given by \eqref{*} with $c_1=0$ and
		\begin{align}
			R^2_g(y_{1:M-1};y_M,x_q-y_M,x_{1:m-1\setminus\{q,\pi(a+1)\}},a_{j_1,t_1}) =& x_q\cdot a_{j_1,t_1}+x_q\cdot R_{g'}^1,\label{induc_-3.1}
	\end{align}
	where the omitted argument of $P_{g'}^{(1)}$ and $P_{g'}^{(k')}$ is
	$(y_{1:M-1};y_M,x_q-y_M,x_{1:m-1\setminus\{q,\pi(a+1)\}},\frac{j_1}{|j_1|})$ and of $R^1_{g'}$ is $(y_{1:M-1};y_M,x_q-y_M,x_{1:m-1\setminus\{q,\pi(a+1)\}},a_{j_1,t_1})$. 
	Since $|G_g|=|x_q\cdot\tilde G_{g'}|\leq|x_q||G_{g'}|$, 
	\eqref{5.55_1} also holds in this case.
	Thus by the induction hypothesis and \eqref{induc_-1'}, \eqref{eq:bound} follows in this case.
	
	For both $\sigma_{a+1}=\pm$, the cases we discussed above ensure that 
	if all the functions for $g'$ satisfy conditions $(i)-(v)$ so do these for $g$. 
	{For $(vi)$, $Q_g(y_{1:M};0)>0$ follows from \cite[Lemma 2.19]{CGT24}. For $P_g(l_{1:m+1\backslash\{c_1b_1,\pi(a)\}},y_{1:M};x_{1:m+1\backslash\{\pi(a)\}},\frac{j_1}{|j_1|})$, $G_g(y_{1:M};x_{1:m+1\backslash\{\pi(a)\}},a_{j_1,t_1})$ and $m\geq2$,
	when $a=1,$ this is clear by \eqref{induc}. For $a>1$ and $\sigma_a=+,$ the new appeared Leray matrices and vectors in \eqref{induc_+1}-\eqref{induc_+3.1} are different from $\hat\Lp(\sum_{i\neq\pi(a)}x_i)$ and $\sum_{i\neq\pi(a)}x_i$ since $m+1\geq 4.$ Thus the functions $P_{g}$ and $G_g$, evaluated at its new arguments, do not contain a term of the form above by the induction hypothesis.
	For $\sigma_a=-$, this is also true since in \eqref{induc_-1}-\eqref{induc_-3.1}, $m-1\geq 3.$}
\end{proof}

Now by Lemma \ref{lem5.8} we obtain the following integral representation for the limit of $\langle{\tilde\sigma}_{-\bj',\bt'},\fT^N_p[g]{\tilde\sigma}_{\bj,\bt}\rangle_{L^2_{m_\kappa}}$ according to the cases we have discussed about the operators $T^{N,+}[(i,i')]$ and $T^{N,-}[q].$ 
The key point is that the terms in this integral depending on the basis $\{a_{j_1,1},\cdots,a_{j_1,d-1},\frac{j_1}{|j_1|}\}$ are in the form of inner products, and we can use
a rotational change of coordinates to show that the integral in \eqref{liminner} is a constant independent of $j_1$.

\begin{lem}\label{lem5.9}
	Let $p\in\Pi^{(n)}_{a,1}$ and $g\in\tilde\fG^2[p]$, where $a=2b\geq 1$ is even. There exist functions $W^i_g:\RR^{b+2}\to\RR$, $i=1,2$ and $W^3_g:\RR^{b+1}\to\RR$, such that
	\begin{align}
		&\lim_{N\to \infty}\langle{\tilde\sigma}_{-\bj',\bt'},\fT^N_p[g]{\tilde\sigma}_{\bj,\bt}\rangle_{L^2_{m_\kappa}}
		=\1_{A_{\bj;\bj'}(g)}\1_{t'_{\pi(a)}=t_2}
		\frac{|j_1|^2}{|\bj|^2}
		(\lambda\iota)^a2^{bd}\times\label{liminner}\\
		&\int\mu(dy_{1:b})\frac{I_g(y_{1:b};0)}{2\pi Q_g(y_{1:b};0)}\left[W^1_g\left(y_{1:b},\frac{j_1}{|j_1|},a_{j_1,t_1}\right)W^2_g\left(y_{1:b},\frac{j_1}{|j_1|},a_{j_1,t'_{\pi(a)^c}}\right)+\1_{t'_{\pi(a)^c}=t_1}W^3_g\left(y_{1:b},\frac{j_1}{|j_1|}\right)\right]\, ,\nonumber
	\end{align}
	where $A_{\bj;\bj'}(g)=\{j'_{\pi(a)^c}=j_1;\, j'_{\pi(a)}=j_2\}$, for $\pi(a)^c=\{1,2\}\backslash\{\pi(a)\}.$ $\mu$ is the uniform probability measure on $[-1,1]^{bd}$, and $I_g, Q_g$ are the functions defined in Lemma \ref{lem5.8}.
	 The terms in $W_g^1$ (resp. $W_g^2$) are the products of inner-products between vectors in $V$ and $V'$, 
	 where $V$ is the set of vectors which are linear combinations of $y_{1:b},\frac{j_1}{|j_1|}$ and $a_{j_1,t_1}$ (resp. $a_{j_1,t'_{\pi(a)^c}}$), and $V'$ is the set of vectors in $V$ divided by its Euclidean norm.
	 The terms in $W_g^3$ have the same form depending on $y_{1:b},\frac{j_1}{|j_1|}$.
	Moreover,
	the integral in \eqref{liminner} is independent of $j_1$.
	
	If $j_1,\, j'_1\in\ZZ_0^d,\, t_1,\, t'_1\in\{1,\cdots,d-1\}$ and $g\in\fG^1[p]$, the analog of \eqref{liminner} holds upon setting $\bj=j_1,\,\bj'=j'_1$ and $\bt=t_1,\, \bt'=t'_1$, and removing the dependence of $t_2$ and $\pi(a)$. 
	Furthormore, for $g\in\fG^1[p]$ and the associated elements $\hat g$ and $\tilde g$ in $\tilde\fG^2[p]$, we have $W_g^i=W^i_{\hat g}=W^i_{\tilde g}$ for $i=1,2,3$.
\end{lem}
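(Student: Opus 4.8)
The plan is to compute the pairing mode by mode from the explicit kernel of Lemma \ref{lem5.8}. Since $p\in\Pi^{(n)}_{a,1}$ forces $m=1$, the vector $\fT^N_p[g]\tilde\sigma_{\bj,\bt}$ lives in $L^2_{m_\kappa}$ with $m_\kappa=\kappa$, and by \eqref{eq:kernelf} its kernel equals $N^{\frac d2(1-m)}F_g^N(l_{1:m_\kappa},N^{-1}k_{1:m_\kappa};N^{-1}\bj,\bt)=F_g^N(\dots)$ because $m=1$. The inner product $\langle\tilde\sigma_{-\bj',\bt'},\fT^N_p[g]\tilde\sigma_{\bj,\bt}\rangle_{L^2_{m_\kappa}}$ is then a finite contraction: it evaluates this kernel at the external momenta fixed by $\bj'$ and contracts the free component indices $l_{1:m_\kappa}$ against the basis vectors $a_{j'_i,t'_i}$. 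First I would read the momentum constraints $\1_{\sum_{j\ne\pi(a)}x_j=j_1}\1_{x_{\pi(a)}=j_2}$ in \eqref{eq:formkernelG}, which pin $\bj'$ and produce $\1_{A_{\bj;\bj'}(g)}$, and then use orthonormality of $\{a_{j_2,\alpha}\}$ to contract the explicit factor $a_{j_2,t_2}^{l_{\pi(a)}}$ against $a_{j_2,t'_{\pi(a)}}$, giving $\1_{t'_{\pi(a)}=t_2}$.

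Next I would pass to $N\to\infty$. The internal $y$-sum carries $N^{-dM}$ with $M=b$, so $N^{-dM}\sum_{y_{1:b}}\to 2^{bd}\int\mu(dy_{1:b})$ as a Riemann sum (explaining the $2^{bd}$ factor), while the rescaled external momenta $N^{-1}k\to0$, so $I_g,Q_g$ are evaluated at $x=0$; property (vi) of Lemma \ref{lem5.8} ensures $Q_g(y_{1:b};0)>0$ a.e. and that $P_g,G_g$ do not see the vanishing $\sum_{i\ne\pi(a)}x_i$, so the integrand has a clean pointwise limit. To exchange limit and sum I would invoke the two-step uniform-integrability argument set up before Lemma \ref{lem5.8}: the bound \eqref{eq:bound}, $|P_g||G_g|\le P_g^1$, dominates the integrand by the one from $F_g^{N,1}$, which is in turn controlled uniformly in $N$ via $\fT^{N,\delta}_p[g]$ with $1<\delta<d/2$ (here $d\ge3$ is essential). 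It then remains to organize the contraction of the surviving free index $\pi(a)^c$. Since $G_g=c_1(a_{j_1,t_1}+R^1_g)(l_{b_1})+(1-c_1)R^2_g$ and, for $m=1$, $b_1$ can only be $\pi(a)^c$, the pairing against $a_{j_1,t'_{\pi(a)^c}}$ produces inner products in which the $a_{j_1,t_1}$- and $a_{j_1,t'_{\pi(a)^c}}$-dependencies either sit in separate factors, collected into $W^1_g(\dots,a_{j_1,t_1})\,W^2_g(\dots,a_{j_1,t'_{\pi(a)^c}})$, or meet only through the direct pairing $\langle a_{j_1,t_1},a_{j_1,t'_{\pi(a)^c}}\rangle=\delta_{t_1,t'_{\pi(a)^c}}$ coming from the $c_1=1$ expansion of $G_g$, yielding $\1_{t'_{\pi(a)^c}=t_1}W^3_g$. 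Property (v) of Lemma \ref{lem5.8} guarantees every factor is an inner product of vectors in $V$ and $V'$, which is exactly the asserted form of $W^1_g,W^2_g,W^3_g$.

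The key remaining point, and the one I expect to be the main obstacle, is the independence of the integral from $j_1$. The integrand depends on $j_1$ only through the orthonormal frame $\{a_{j_1,1},\dots,a_{j_1,d-1},j_1/|j_1|\}$ and always via inner products, while $Q_g(\cdot;0)$ is $O(d)$-invariant by property (ii) of Lemma \ref{lem5.8}. Writing each $y_i$ in this frame and applying the rotation carrying the $j_1$-frame to a reference frame turns each frame-dependent inner product into a plain coordinate of $y$, so the numerator $W^1_g W^2_g+\1_{t'_{\pi(a)^c}=t_1}W^3_g$ and $Q_g$ become frame-independent. The genuine difficulty is that $I_g$ and the cube $[-1,1]^{bd}$ underlying $\mu$ are built from $|\cdot|_\infty$-constraints (recall the $d\ge3$ convention on $\fR^N$, kept precisely for this lemma), hence are not rotation-invariant, so the change of variables does not match the domain on the nose. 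I would resolve this by combining the rotation with the Hermitian symmetry of $\sum_{p\in\Pi^{(n)}_{a,1}}\fT^N_p$ flagged in the introduction: symmetrizing restores the $t_1\leftrightarrow t'_{\pi(a)^c}$ symmetry and forces the bilinear dependence on $(a_{j_1,t_1},a_{j_1,t'_{\pi(a)^c}})$ to collapse to its isotropic part on $j_1^\perp$, whose coefficient is a rotation-invariant moment of $y$ and hence independent of $j_1$.

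Finally, for $g\in\fG^1[p]$ the same computation applies with $\kappa=1$, $\bj=j_1$, $\bj'=j'_1$, and no $j_2,t_2,\pi(a)$ data, following the last clause of Lemma \ref{lem5.8}. For the identity $W^i_g=W^i_{\hat g}=W^i_{\tilde g}$, I would use the graph correspondence of \cite[Lemma 2.18]{CGT24}: the associated $\hat g,\tilde g\in\tilde\fG^2[p]$ differ from $g$ only by an extra connected component that is a single non-branching, non-merging path $\pi$. This path contributes exactly the decoupled factors $a_{j_2,t_2}^{l_{\pi(a)}}$ and the indicators fixing $\pi(a)$ and $x_{\pi(a)}=j_2$, and does not enter the construction of $P_g,G_g$; hence the inner-product structure defining $W^1_g,W^2_g,W^3_g$ is identical for $g,\hat g,\tilde g$.
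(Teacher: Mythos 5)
Your main computation follows the paper's proof of Lemma \ref{lem5.9} essentially step for step: evaluating the kernel of Lemma \ref{lem5.8} at the external momenta (which produces $\1_{A_{\bj;\bj'}(g)}\1_{t'_{\pi(a)}=t_2}$ exactly as in \eqref{eq:innerp}), converting the $y$-sum into $2^{bd}\int\mu(dy_{1:b})$ over the cube, justifying the exchange of limit and integral by the two-step domination — \eqref{eq:bound} bounding the integrand by the $\delta=1$ one, itself uniformly integrable via the $\fT^{N,\delta}_p[g]$ bounds for $\delta$ up to $d/2$ — and organizing the surviving contraction into $W^1_gW^2_g+\1_{t'_{\pi(a)^c}=t_1}W^3_g$. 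Your observation that the indicator term can only come from the direct pairing of $a_{j_1,t'_{\pi(a)^c}}$ against $a_{j_1,t_1}+R^1_{g'}$ matches the paper's case $c_1b_1=\pi(a)^c$ (see \eqref{W2}); the paper's cases $c'_1=0$ and $c'_1b'_1=2$ give $W^3_g=0$, as your "separate factors" alternative predicts. (One small point you gloss over: in the mixed case the paper needs the factorization $R^1_{g'}=R^{1,1}_{g'}R^{1,2}_{g'}$ of the scalar and vector dependence in order to define $W^1_g$ and $W^2_g$ as separate functions.) Your argument for $W^i_g=W^i_{\hat g}=W^i_{\tilde g}$ via the extra non-branching path is also the paper's (independence of $j'_{\pi(a)}=j_2$).

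The genuine gap is in your treatment of the $j_1$-independence of the integral. The paper proves it by the direct rotational change of variables $z_{1:b}=U^Ty_{1:b}$, with $U$ carrying the frame $\{a_{\tilde j_1,1},\dots,a_{\tilde j_1,d-1},\tilde j_1/|\tilde j_1|\}$ to that of $j_1$, using the inner-product structure of the $W^i_g$ together with conditions (i)--(ii) of Lemma \ref{lem5.8} for $I_g(\cdot\,;0)$ and $Q_g(\cdot\,;0)$; you correctly sense that the $|\cdot|_\infty$-based constraints are the delicate point here, but your substitute — symmetrizing via the Hermitian property of $\sum_{p\in\Pi^{(n)}_{a,1}}\fT^N_p$ — does not prove the assertion. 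That symmetry, i.e. $(\fT^N_p)^*=\fT^N_{f(p)}$ as in \eqref{5.64}, only identifies the pairing for $(\bj,\bt;\bj',\bt')$ with the one for $(-\bj',\bt';-\bj,\bt)$: it relates the integral at $j_1$ to the integral at $-j_1$ with transposed indices, and contains no comparison between two genuinely different directions $j_1$ and $\tilde j_1$, which is exactly what the lemma claims. Moreover, the logical order in the paper is the reverse of yours: the $j_1$-independence established inside Lemma \ref{lem5.9} is what makes the Hermitian-symmetry step \eqref{5.64}--\eqref{B} in the proof of Proposition \ref{prop:limite} usable (the adjoint pairing lives at momentum $-j_1$, and one needs the integral to be the same constant there), so invoking that symmetry to obtain the independence is circular. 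Finally, even granting your "collapse to the isotropic part on $j_1^\perp$" at fixed $j_1$, this would only show the bilinear form is $c(j_1)\1_{t_1=t'_1}$; proving $c(j_1)=c(\tilde j_1)$ still requires precisely the rotation of the integration variables that you set aside, so the anisotropy problem you flagged is displaced, not resolved.
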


\begin{proof}
	We only prove \eqref{liminner} for the case $\kappa=2$. The argument for $\kappa=1$ is similar and we omit the details. 
	By \eqref{eq:kernelf} and \eqref{eq:formkernelG}, we have
	\begin{align}
		&\langle{\tilde\sigma}_{-\bj',\bt'},\fT^N_p[g]{\tilde\sigma}_{\bj,\bt}\rangle_{L^2_{m_\kappa}}\nonumber\\
		=&\sum_{l_{1:2}} a_{j'_1,t'_1}^{l_1}a_{j'_2,t'_2}^{l_2}F_g^N((l_1,N^{-1}j'_1),(l_2,N^{-1}j'_2);N^{-1}\bj,\bt)\nonumber\\
		=&\1_{A_{\bj;\bj'}(g)}\1_{t'_{\pi(a)}=t_2}\frac{|j_1|}{|\bj|^2}(\lambda\iota)^a 
		N^{1-dM}\sum_{l_{\pi(a)^c}}a_{j_1,t'_{\pi(a)^c}}^{l_{\pi(a)^c}}\sum_{\substack{y_{1:M}\in (\frac1N \ZZ^d_0)^M:\\|y_i|_\infty\le 1, 1\leq i\le M}} \frac{I_g(y_{1:M};N^{-1}j_1)}{Q_g(y_{1:M};N^{-1}\bj)}
		\times\nonumber\\
		&P_g(l_{1:2\backslash\{\pi(a),c_1b_1\}},y_{1:M};N^{-1}j_1,\frac{j_1}{|j_1|})G_g(y_{1:M};N^{-1}j_1,a_{j_1,t_1}).
		\label{eq:innerp}
	\end{align}
	Recall the definition of $G_g$ in Lemma \ref{lem5.8}. Since $m=1$, the left-most operator in the product $\fT_p^N[g]$ is $T^{N,-}[q]$, $q\in\{1,2\}$, such that $\fT^N_p[g]\tilde\sigma_{\bj,\bt} = T^{N,-}[q]\fT^N_p[g']\tilde\sigma_{\bj,\bt}.$
	From the proof of Lemma \ref{lem5.8}, there are three cases in \eqref{eq:innerp}. Now we calculate the limit of $\langle{\tilde\sigma}_{-\bj',\bt'},\fT^N_p[g]{\tilde\sigma}_{\bj,\bt}\rangle_{L^2_{m_\kappa}}$ in these three cases separately.
	
	{\bfseries\noindent\textit{1.The case $c'_1=0$ (corresponds to the case 2.1 in Lemma \ref{lem5.8}).}}
{By \eqref{eq:innerp}, in this case, $x_q = N^{-1}j'_q = N^{-1}j_1$, and $P_g = P_g^{(q)}$ in \eqref{induc_-1}. Then by \eqref{induc_-1} and \eqref{induc_-1.1}, we obtain
		\begin{align*}
		&P_g(y_{1:M};N^{-1}j_1,\frac{j_1}{|j_1|})
		=\frac{1}{2\pi}\left(\frac{j_1}{N}\cdot P_{g'}^{(2)}\right)\hat\Lp(j_1)P_{g'}^{(1)},
	\end{align*} 
	and $G_g$ is given by \eqref{*} with $c_1=0$ and
	\begin{align*}
		R^2_g(y_{1:M};N^{-1}j_1,a_{j_1,t_1}) =& R^2_{g'} (y_{1:M-1};y_M,N^{-1}j_1-y_M,a_{j_1,t_1}),
	\end{align*}
	where 
	the omitted arguments of $P_{g'}^{(k')}$ are $(y_{1:M-1};y_M,N^{-1}j_1-y_M,\frac{j_1}{|j_1|})$.}
Then \eqref{eq:innerp} equals
	\begin{align}
		&\1_{A_{\bj;\bj'}(g)}\1_{t'_{\pi(a)}=t_2}
		\frac{|j_1|^2}{|\bj|^2}{(\lambda\iota)^a}\sum_{\substack{y_{1:M}\in (\frac1N \ZZ^d_0)^M:\\|y_i|\le 1, i\le M}}\frac{N^{-dM}}{2\pi} \frac{I_g(y_{1:M};N^{-1}j_1)}{Q_g(y_{1:M};N^{-1}\bj)}
		R_{g'}^2(y_{1:M-1};y_M,N^{-1}j_1-y_M,a_{j_1,t_1})
		\times\nonumber\\
		&\left(\frac{j_1}{|j_1|}\cdot P_{g'}^{(2)}(y_{1:M-1};y_M,N^{-1}j_1-y_M,\frac{j_1}{|j_1|})\right)\left(a_{j_1,t'_{\pi(a)^c}}\cdot P_{g'}^{(1)}(y_{1:M-1};y_M,N^{-1}j_1-y_M,\frac{j_1}{|j_1|})\right).\label{case1}
	\end{align}
{Recall from \cite[Lemma 2.20]{CGT24}, since we assume $N\in\NN+\frac12,$ $[-1,1]^d$ can be written as the union of cubes $C(y^{*})$ of side $N^{-1}$, centered at $y^{*}\in N^{-1}\ZZ^d\cap [-1,1]^d.$ For every $y\in [-1,1]^d,$ denote $y^{*}(y)$ as the point $y^{*}$ such that $y\in C(y^{*})$.
	Then the sum in \eqref{case1} can be written as 
	\begin{align}
		&2^{bd}\int\mu(dy_{1:b})\frac{1}{2\pi}\frac{I_g^{(N)}(y_{1:b};N^{-1}j_1)}{Q_g^{(N)}(y_{1:b};N^{-1}\bj)}R_{g'}^{2,(N)}(y_{1:b-1};y_b,N^{-1}j_1-y_b,a_{j_1,t_1})
		\times\nonumber\\
		&\left(\frac{j_1}{|j_1|}\cdot P_{g'}^{(2),(N)}(y_{1:b-1};y_b,N^{-1}j_1-y_b,\frac{j_1}{|j_1|})\right)\left(a_{j_1,t'_{\pi(a)^c}}\cdot P_{g'}^{(1),(N)}(y_{1:b-1};y_b,N^{-1}j_1-y_b,\frac{j_1}{|j_1|})\right),\label{5.41}
	\end{align}
		where $\mu$ is the uniform probability measure on $[-1,1]^{bd}$, and
		\begin{align*}
			Q_g^{(N)}(y_{1:b};N^{-1}\bj):=Q_g(y^{*}(y_1),\cdots,y^{*}(y_b);N^{-1}\bj)\1_{y_i\notin[-1/2N^{-1},1/2N^{-1}]^d,i\leq b}\,,
		\end{align*} and similarly for the other functions.
	Moreover, by \eqref{eq:formkernelG} and \eqref{induc_-1'}, we have}
	\begin{align}
		&\langle{\tilde\sigma}_{-\bj',\bt'}, {\fT^{N,\delta}_p[g]}{\tilde\sigma}_{\bj,\bt}\rangle_{L^2(\TT^d,\RR^d)^{\otimes m_\kappa}}\nonumber\\
		=&
		\frac{1}{\sqrt{d-1}}\1_{A_{\bj;\bj'}(g)}\1_{t'_{\pi(a)}=t_2}
		\frac{\lambda^a |j_1|^{2\delta}}{|\bj|^{2\delta}}2^{bd}\int\mu(dy_{1:b})\left|\frac{P^{1,(N)}_{g'}(y_{1:b-1};y_b,N^{-1}j_1-y_b)}{2\pi Q_g^{(N)}(y_{1:b};N^{-1}\bj)}\right|^\delta I_g^{(N)}(y_{1:b};N^{-1}j_1).\label{5.44}
	\end{align} 
{Since by \eqref{e:delta}, for $1\leq\delta\leq d/2,$ $\langle{\tilde\sigma}_{-\bj',\bt'}, {\fT^{N,\delta}_p[g]}{\tilde\sigma}_{\bj,\bt}\rangle_{L^2(\TT^d,\RR^d)^{\otimes m_\kappa}}$ is uniformly bounded in $N$, the integrand in \eqref{5.44} is uniform integrable for $\delta=1$.
By \eqref{eq:bound}, the absolute value of the integrand in \eqref{5.41} 
is bounded above by that in \eqref{5.44}.
Thus the integrand in \eqref{5.41} is also uniform integrable, and the limit of the integral \eqref{5.41} exists and equals 
\begin{align}\label{5.45}
	&\frac{2^{bd}}{2\pi}\int\mu(dy_{1:b})\frac{I_g(y_{1:b};0)}{Q_g(y_{1:b};0)}R^2_{g'}(y_{1:b-1};y_b,-y_b,a_{j_1,t_1})\times\nonumber\\
	&\left(\frac{j_1}{|j_1|}\cdot P_{g'}^{(2)}(y_{1:b-1};y_b,-y_b,\frac{j_1}{|j_1|})\right)
	\left(a_{j_1,t'_{\pi(a)^c}}\cdot P_{g'}^{(1)}(y_{1:b-1};y_b,-y_b,\frac{j_1}{|j_1|})\right).
\end{align}
Then \eqref{liminner} holds with
\begin{align}
	W_g^1(y_{1:b},\frac{j_1}{|j_1|},a_{j_1,t_1})=&R_{g'}^2(y_{1:b-1};y_b,-y_b,a_{j_1,t_1})\left(\frac{j_1}{|j_1|}\cdot P_{g'}^{(2)}(y_{1:b-1};y_b,-y_b,\frac{j_1}{|j_1|})\right),\nonumber\\
	W_g^2(y_{1:b},\frac{j_1}{|j_1|},a_{j_1,t'_{\pi(a)^c}})=&a_{j_1,t'_{\pi(a)^c}}\cdot P_{g'}^{(1)}(y_{1:b-1};y_b,-y_b,\frac{j_1}{|j_1|}),\label{W1}
\end{align}
and $W_g^3 = 0$. By condition (v) and (vi) in Lemma \ref{lem5.8}, 
the terms in $W_g^1$ (resp. $W_g^2$) are the products of inner-products between vectors in $V$ and $V'$.
}
	
		{\bfseries\noindent\textit{2.The case $c_1b_1=\pi(a)^c$ (corresponds to the case 2.2 in Lemma \ref{lem5.8}).}} 
{By \eqref{eq:innerp}, in this case, $x_q = N^{-1}j'_q = N^{-1}j_1$ in \eqref{induc_-2}. Then by \eqref{induc_-2} and \eqref{induc_-2.1}, we obtain
		\begin{align*}
			P_g(y_{1:M};N^{-1}j_1,\frac{j_1}{|j_1|})&=\frac{1}{2\pi}\frac{j_1}{N}\cdot P_{g'}^{(2)}(y_{1:M-1};y_M,N^{-1}j_1-y_M,\frac{j_1}{|j_1|}),
		\end{align*}
	and $G_g$ is given by \eqref{*} with $c_1b_1=\pi(a)^c$ and
	\begin{align*}
		R_g^1(y_{1:M};N^{-1}j_1,a_{j_1,t_1})
		&=\hat\Lp(j_1) R_{g'}^1(y_{1:M-1};y_M,N^{-1}j_1-y_M,a_{j_1,t_1}).
	\end{align*}
	Then \eqref{eq:innerp} equals
	\begin{align}
		&\1_{A_{\bj;\bj'}(g)}\1_{t'_{\pi(a)}=t_2}
		\frac{|j_1|^2}{|\bj|^2}(\lambda\iota)^a
		\sum_{\substack{y_{1:M}\in (\frac1N \ZZ^d_0)^M:\\|y_i|_\infty\le 1, i\le M}} \frac{N^{-dM}I_g(y_{1:M};N^{-1}j_1)}{2\pi Q_g(y_{1:M};N^{-1}\bj)}
		\times\nonumber\\
		&\left(\frac{j_1}{|j_1|}\cdot P_{g'}^{(2)}(y_{1:M-1};y_M,N^{-1}j_1-y_M,\frac{j_1}{|j_1|})\right)\times\nonumber\\
		&\left(\1_{t_1=t'_{\pi(a)^c}}+a_{j_1,t'_{\pi(a)^c}}\cdot R_{g'}^1(y_{1:M-1};y_M,N^{-1}j_1-y_M,a_{j_1,t_1})\right).\label{5.58}
	\end{align}
Similar as the case 1, the limit of the summation in \eqref{5.58} exists, and equals 
\begin{align}
	&2^{bd}\int\mu(dy_{1:b})\frac{1}{2\pi}\frac{I_g(y_{1:b};0)}{Q_g(y_{1:b};0)}
	\left(\frac{j_1}{|j_1|}\cdot P_{g'}^{(2)}(y_{1:b-1};y_b,-y_b,\frac{j_1}{|j_1|})\right)\times\nonumber\\
	&\left(\1_{t_1=t'_{\pi(a)^c}}+a_{j_1,t'_{\pi(a)^c}}\cdot R_{g'}^{1}(y_{1:b-1};y_b,-y_b,a_{j_1,t_1})\right).\label{limC3.1}
\end{align}
By condition (v) in Lemma \ref{lem5.8}, the terms involving $a_{j_1,t_1}$ in $R_{g'}^{1}$ are in the form of the inner-products with the vectors in $V$ and $V'$. Thus we can write $R_{g'}^{1}(y_{1:b-1};y_b,-y_b,a_{j_1,t_1})$ $=R_{g'}^{1,1}(y_{1:b},a_{j_1,t_1})R_{g'}^{1,2}(y_{1:b})$ with the real-valued function $R_{g'}^{1,1}:\RR^{b+1}\to\RR$ and vector-valued function $R_{g'}^{1,2}:\RR^{b+1}\to\RR^d.$
Thus \eqref{liminner} holds with
\begin{align}
	W_g^{1}(y_{1:b},\frac{j_1}{|j_1|},a_{j_1,t_1})=&
	\left(\frac{j_1}{|j_1|}\cdot P_{g'}^{(2)}(y_{1:b-1};y_b,-y_b,\frac{j_1}{|j_1|})\right) R_{g'}^{1,1}(y_{1:b},a_{j_1,t_1}),\nonumber\\
 W_g^{2}(y_{1:b},\frac{j_1}{|j_1|},a_{j_1,t'_{\pi(a)^c}})=&a_{j_1,t'_{\pi(a)^c}}\cdot R_{g'}^{1,2}(y_{1:b}),\label{W2}
\end{align}
and
\begin{align*}
 W_g^{3}(y_{1:b},\frac{j_1}{|j_1|})=&\frac{I_g(y_{1:b};0)}{2\pi Q_g(y_{1:b};0)}
	\left(\frac{j_1}{|j_1|}\cdot P_{g'}^{(2)}(y_{1:b-1};y_b,-y_b,\frac{j_1}{|j_1|})\right).
\end{align*}
By condition (v) in Lemma \ref{lem5.8}, $W_g^i,i=1,2,3$ satisfy the condtion in Lemma \ref{lem5.9}.
}

	{\bfseries\noindent\textit{3.The case $c'_1b'_1=2$ (corresponds to the case 2.3 in Lemma \ref{lem5.8}).}}
{By \eqref{eq:innerp}, in this case, $x_q = N^{-1}j'_q = N^{-1}j_1$, and $P_g = P_g^{(q)}$ in \eqref{induc_-3}. Then by \eqref{induc_-3} and \eqref{induc_-3.1}, we obtain
	\begin{align*}
		P_g(y_{1:M};N^{-1}j_1,\frac{j_1}{|j_1|})&=\frac{1}{2\pi}\hat\Lp(j_1) P_{g'}^{(1)}(y_{1:M-1};y_M,N^{-1}j_1-y_M,\frac{j_1}{|j_1|}),
	\end{align*}
	and $G_g$ is given by \eqref{*} with $c_1=0$ and
	\begin{align*}
		R_g^2(y_{1:M};N^{-1}j_1,a_{j_1,t_1})
		=&\frac{j_1}{N}\cdot R_{g'}^1(y_{1:M-1};y_M,N^{-1}j_1-y_M,a_{j_1,t_1}).
	\end{align*}
	Then \eqref{eq:innerp} in this case equals
	\begin{align}
		&\1_{A_{\bj;\bj'}(g)}\1_{t'_{\pi(a)}=t_2}
		\frac{|j_1|^2}{|\bj|^2}(\lambda\iota)^a 
		\sum_{\substack{y_{1:M}\in (\frac1N \ZZ^d_0)^M:\\|y_i|_\infty\le 1, i\le M}} \frac{N^{-dM}I_g(y_{1:M};N^{-1}j_1)}{2\pi Q_g(y_{1:M};N^{-1}\bj)}
		\times\nonumber\\
		&\left(a_{j_1,t'_{\pi(a)^c}}\cdot P_{g'}^{(1)}(y_{1:M-1};y_M,N^{-1}j_1-y_M,\frac{j_1}{|j_1|})\right)\left(\frac{j_1}{|j_1|}\cdot R_{g'}^1(y_{1:M-1};y_M,N^{-1}j_1-y_M,a_{j_1,t_1})\right).\label{case2}
	\end{align}
	Similar as case 1, the limit of the summation in \eqref{case2} exists, and equals 
\begin{align*}
	&2^{bd}\int\mu(dy_{1:b})\frac{1}{2\pi}\frac{I_g(y_{1:b};0)}{Q_g(y_{1:b};0)}
	\left(\frac{j_1}{|j_1|}\cdot R_{g'}^{1}(y_{1:b-1};y_b,-y_b,a_{j_1,t_1})\right)\left(a_{j_1,t'_{\pi(a)^c}}\cdot P_{g'}^{(1)}(y_{1:b-1};y_b,-y_b,\frac{j_1}{|j_1|})\right).
\end{align*}
Thus \eqref{liminner} holds with
\begin{align}
	W_g^{1}(y_{1:b},\frac{j_1}{|j_1|},a_{j_1,t_1})=&
\frac{j_1}{|j_1|}\cdot R_{g'}^{1}(y_{1:b-1};y_b,-y_b,a_{j_1,t_1}),\nonumber\\
	W_g^{2}(y_{1:b},\frac{j_1}{|j_1|},a_{j_1,t'_{\pi(a)^c}})=&a_{j_1,t'_{\pi(a)^c}}\cdot P_{g'}^{(1)}(y_{1:b-1};y_b,-y_b,\frac{j_1}{|j_1|}),\label{W3}
\end{align}
and $W_g^3=0.$
By condition (v) in Lemma \ref{lem5.8}, 
$W_g^i,i=1,2,3$ satisfy the condtion in Lemma \ref{lem5.9}. 
}

Moreover, since all the functions in the above three cases are independent of $j'_{\pi(a)}=j_2$,
we have $W^i_g=W^i_{\hat g}=W^i_{\tilde g}$ for $i=1,2,3.$
{
Now we prove that the integral in \eqref{liminner} is independent of $j_1$.
For another $\tilde j_1\in\ZZ_0^d$, there exists an unitary matrix $U$ such that $\frac{j_1}{|j_1|}=U\frac{\tilde j_1}{|\tilde j_1|}$.
Recall that for $j_1\in\ZZ_0^d$, $\{a_{j_1,1},\cdots, a_{j_1,d-1},\frac{j_1}{|j_1|}\}$ is a right-handed othonormal basis in $\RR^d$. 
Then $a_{j_1,t_1} = Ua_{\tilde j_1,t_1}$ and $a_{j_1,t'_{\pi(a)^c}} = Ua_{\tilde j_1,t'_{\pi(a)^c}}$.
Since the terms in $W_g^i$, $i=1,2,3$ are inner products of vectors in $V$ and $V'$, combining conditions (i),(ii) in Lemma \ref{lem5.8}, for $y_{1:b}\in[-1,1]^{bd}$, we have
\begin{align*}
	&\frac{I_g(y_{1:b};0)}{2\pi Q_g(y_{1:b};0)}\left[W^1_g\left(y_{1:b},\frac{j_1}{|j_1|},a_{j_1,t_1}\right)W^2_g\left(y_{1:b},\frac{j_1}{|j_1|},a_{j_1,t'_{\pi(a)^c}}\right)+\1_{t'_{\pi(a)^c}=t_1}W^3_g\left(y_{1:b},\frac{j_1}{|j_1|}\right)\right]\\
	=&\frac{I_g(U^Ty_{1:b};0)}{2\pi Q_g(U^Ty_{1:b};0)}\left[W^1_g\left(U^Ty_{1:b},\frac{\tilde j_1}{|\tilde j_1|},a_{\tilde j_1,t_1}\right)W^2_g\left(U^Ty_{1:b},\frac{\tilde j_1}{|\tilde j_1|},a_{\tilde j_1,t'_{\pi(a)^c}}\right)+\1_{t'_{\pi(a)^c}=t_1}W^3_g\left(U^Ty_{1:b},\frac{\tilde j_1}{|\tilde j_1|}\right)\right],
\end{align*}
where $U^T$ is the transpose of $U$, and $U^Ty_{1:b}:=(U^Ty_1,\cdots,U^Ty_b)$.
Since the Lebesgue measure is invariant under unitary transformations, performing the change of variables $z_{1:b}=U^Ty_{1:b}$ yields
\begin{align*}
	&\int\mu(\ud y_{1:b})\frac{I_g(y_{1:b};0)}{2\pi Q_g(y_{1:b};0)}\left[W^1_g\left(y_{1:b},\frac{j_1}{|j_1|},a_{j_1,t_1}\right)W^2_g\left(y_{1:b},\frac{j_1}{|j_1|},a_{j_1,t'_{\pi(a)^c}}\right)+\1_{t'_{\pi(a)^c}=t_1}W^3_g\left(y_{1:b},\frac{j_1}{|j_1|}\right)\right]\\
	=&\int\mu(\ud z_{1:b})\frac{I_g(z_{1:b};0)}{2\pi Q_g(z_{1:b};0)}\left[W^1_g\left(z_{1:b},\frac{\tilde j_1}{|\tilde j_1|},a_{\tilde j_1,t_1}\right)W^2_g\left(z_{1:b},\frac{\tilde j_1}{|\tilde j_1|},a_{\tilde j_1,t'_{\pi(a)^c}}\right)+\1_{t'_{\pi(a)^c}=t_1}W^3_g\left(z_{1:b},\frac{\tilde j_1}{|\tilde j_1|}\right)\right],
\end{align*}
which impies that the integral in \eqref{liminner} is independent of $j_1$.
}
\end{proof}
{
Now we are ready to prove Proposition 
\ref{prop:limite}. The key point is to prove the sum of the integrals in \eqref{liminner} over $g\in\fG^1[p]$ equals $\1_{t'_1=t_1}$ times a constant when $\kappa=1$, which implies the limit equation is the decoupled stochastic heat equation. This is not obvious and in the following, we will use the Hermitian property of the operator $\sum_{p\in\Pi_{a,1}^{(n)}}\fT_p^N$ and a rotational change of coordinates in the integral.
}
\begin{proof}[Proof of Proposition \ref{prop:limite}]
	We begin with $\kappa=1$. In this case, $A_{\bj;\bj'}(g)=\{j'_1=j_1\}$, $\tilde\sigma_{\bj,\bt}=\sigma_{\bj,\bt}$ in \eqref{liminner}.
	Thus by Lemma \ref{lem5.9}, we have
	\begin{align}
		&\lim_{N\to\infty}\sum_{p\in \Pi^{(n)}_{a,1}}\sum_{g\in\fG^1[p]}\langle{\sigma}_{-\bj',\bt'},\fT^N_p[g]{\sigma}_{\bj,\bt}\rangle_{L^2_1}\nonumber\\
		=&\1_{j'_1=j_1}
		(\lambda\iota)^a2^{bd}\sum_{p\in \Pi^{(n)}_{a,1}}\sum_{g\in\fG^1[p]}\int\mu(dy_{1:b})\frac{I_g(y_{1:b};0)}{2\pi Q_g(y_{1:b};0)}\nonumber\\
		&\left[W^1_g\left(y_{1:b},\frac{j_1}{|j_1|},a_{j_1,t_1}\right)W^2_g\left(y_{1:b},\frac{j_1}{|j_1|},a_{j_1,t'_1}\right)+\1_{t'_1=t_1}W^3_g\left(y_{1:b},\frac{j_1}{|j_1|}\right)\right]\nonumber\\
		=:&\1_{j'_1=j_1}B(t'_1,t_1).\label{lim}
	\end{align}
{For $p\in\Pi_{a,1}^{(n)}$, since $a$ is even, $\langle\sigma_{-\bj',\bt'},\fT^N_p\sigma_{\bj,\bt}\rangle_{L^2_1}$ is real-valued. 
Set $f(p)=(p_a,p_{a-1},\cdots,p_0)$ be a path in $\Pi_{a,1}^{(n)}$. Then $f$ is a bijection on $\Pi_{a,1}^{(n)}$,
and by $(T^{N,+})^{*}=-T^{N,-}$, we have $(\fT_p^N)^*=\fT_{f(p)}^N$. Therefore there holds
	\begin{align}
		\sum_{p\in\Pi_{a,1}^{(n)}}\langle\sigma_{-\bj',\bt'},\fT^N_p\sigma_{\bj,\bt}\rangle_{L^2_1}
		=\sum_{p\in\Pi_{a,1}^{(n)}}\langle\sigma_{\bj,\bt},\fT^N_{f(p)}\sigma_{-\bj',\bt'}\rangle_{L_1^2}
		=\sum_{p\in\Pi_{a,1}^{(n)}}\langle\sigma_{\bj,\bt},\fT^N_{p}\sigma_{-\bj',\bt'}\rangle_{L_1^2}
		\,.\label{5.64}
	\end{align}
}{Since $B(t'_1,t_1)$ is independent of $j_1$ by Lemma \ref{lem5.9}, by \eqref{lim} and \eqref{5.64}, we derive that
\begin{align}\label{B}
	B(t'_1,t_1)=B(t_1,t'_1),\quad t_1,\, t'_1\in\{1,\cdots,d-1\}.
\end{align} 
Next we prove that it equals $\1_{t_1=t'_1}B(1,1)$.
Let $1\leq i<r\leq d-1$ be fixed. 
Recall that $\{a_{j_1,1},\cdots, a_{j_1,d-1},\frac{j_1}{|j_1|}\}$ is a right-handed othonormal basis in $\RR^d$.
Now we change the order of $a_{j_1,i}$ and $a_{j_1,r}$, and also the sign of $a_{j_1,i}$. Then
$\{a_{j_1,1},\cdots,a_{j_1,i-1},a_{j_1,r},a_{j_1,i+1},\cdots,a_{j_1,r-1},-a_{j_1,i},a_{j_1,r+1},$ $\cdots,a_{j_1,d-1},\frac{j_1}{|j_1|}\}$ still form a right-handed othonormal basis.
Thus there exists an unitary matrix $U$ such that $Ua_{j_1,i} = a_{j_1,r}$, $Ua_{j_1,r}=-a_{j_1,i}$ and $U\frac{j_1}{|j_1|}=\frac{j_1}{|j_1|}.$
{By Lemma \ref{lem5.9}, the terms in $W_g^i$, $i=1,2,3$ are inner products of vectors in $V$ and $V'$, and by \eqref{W1}, \eqref{W2} and \eqref{W3}, $W^2_g\left(y_{1:b},\frac{j_1}{|j_1|},a_{j_1,r}\right)$ is linear in $a_{j_1,r}$.}
Then for each $g\in\fG^1[p]$, 
we have \begin{align*}
	W^1_g\left(y_{1:b},\frac{j_1}{|j_1|},a_{j_1,i}\right)W^2_g\left(y_{1:b},\frac{j_1}{|j_1|},a_{j_1,r}\right)
	&=	W^1_g\left(Uy_{1:b},\frac{j_1}{|j_1|},a_{j_1,r}\right)W^2_g\left(Uy_{1:b},\frac{j_1}{|j_1|},-a_{j_1,i}\right)\\
	&=-W^1_g\left(Uy_{1:b},\frac{j_1}{|j_1|},a_{j_1,r}\right)W^2_g\left(Uy_{1:b},\frac{j_1}{|j_1|},a_{j_1,i}\right)\,.
\end{align*}
Moreover, by consition (i),(ii) in Lemma \ref{lem5.8}, $I_g(y_{1:b};0)=I_g(Uy_{1:b};0)$ and $Q_g(y_{1:b};0)=Q_g(Uy_{1:b};0)$. 
Since the Lebesgue measure is invariant under unitary transformations, by performing the change of variables $z_{1:b}=Uy_{1:b}$, we have 
\begin{align*}
&\int\mu(dy_{1:b})\frac{I_g(y_{1:b};0)}{2\pi Q_g(y_{1:b};0)}	W^1_g\left(y_{1:b},\frac{j_1}{|j_1|},a_{j_1,i}\right)W^2_g\left(y_{1:b},\frac{j_1}{|j_1|},a_{j_1,r}\right)\\
	=&-\int\mu(dz_{1:b})\frac{I_g(z_{1:b};0)}{2\pi Q_g(z_{1:b};0)}	W^1_g\left(z_{1:b},\frac{j_1}{|j_1|},a_{j_1,r}\right)W^2_g\left(z_{1:b},\frac{j_1}{|j_1|},a_{j_1,i}\right),
\end{align*}
which implies $B(i,r)=-B(r,i)$.
Therefore, by \eqref{B} for $i\neq r$, $B(i,r)=0$.
Let $1< i\leq d-1$ be fixed. 
We change the order of $a_{j_1,1}$ and $a_{j_1,i}$, and also the sign of $a_{j_1,i}$. Then
$\{-a_{j_1,i},\cdots,a_{j_1,i-1},a_{j_1,1},$ $a_{j_1,i+1},\cdots,a_{j_1,d-1},\frac{j_1}{|j_1|}\}$ still form a right-handed othonormal basis, and there exists an unitary matrix $U$ such that $Ua_{j_1,1} = -a_{j_1,i}$ and $U\frac{j_1}{|j_1|}=\frac{j_1}{|j_1|}.$
Then for each $g\in\fG^1[p]$, 
we have \begin{align*}
	W^1_g\left(y_{1:b},\frac{j_1}{|j_1|},a_{j_1,1}\right)W^2_g\left(y_{1:b},\frac{j_1}{|j_1|},a_{j_1,1}\right)
	&=	W^1_g\left(Uy_{1:b},\frac{j_1}{|j_1|},-a_{j_1,i}\right)W^2_g\left(Uy_{1:b},\frac{j_1}{|j_1|},-a_{j_1,i}\right)\\
	&=W^1_g\left(Uy_{1:b},\frac{j_1}{|j_1|},a_{j_1,i}\right)W^2_g\left(Uy_{1:b},\frac{j_1}{|j_1|},a_{j_1,i}\right)\,.
\end{align*}
Then similarly to the above, by performing the change of variables $z_{1:b}=U^Ty_{1:b}$, we have
$B(i,i)=B(1,1).$ 
Therefore, $B(i,j)=\1_{i=j}B(1,1)=:\1_{i=j}c(a,\lambda)$. Hence \eqref{eq:liminner1} follows.
}

For $\kappa=2,$ recall from subsection \ref{sec4.1} that there exists a correspondence between $\fG^1[p]$ and $\tilde\fG^2[p]$.
For every $g\in\fG^1[p]$, set $\hat g$ and $\tilde g$ be the associated elements in $\tilde\fG^2[p]$. Then we have
	\begin{align}
		&\lim_{N\to \infty}\sum_{p\in \Pi^{(n)}_{a,1}}\sum_{g\in\tilde\fG^2[p]}\langle{\tilde\sigma}_{-\bj',\bt'},\fT^N_p[g]{\tilde\sigma}_{\bj,\bt}\rangle_{L^2_{m_\kappa}}\nonumber\\
		=&\lim_{N\to \infty}\sum_{p\in \Pi^{(n)}_{a,1}}\sum_{g\in\fG^1[p]}\left(\langle{\tilde\sigma}_{-\bj',\bt'},\fT^N_p[\hat g]{\tilde\sigma}_{\bj,\bt}\rangle_{L^2_{m_\kappa}}+\langle{\tilde\sigma}_{-\bj',\bt'},\fT^N_p[\tilde g]{\tilde\sigma}_{\bj,\bt}\rangle_{L^2_{m_\kappa}}\right).\label{5.75}
	\end{align}
	Without loss of generality, we set $\pi(a)$ in $\hat g$ be 1, while in $\tilde g$ be 2. By Lemma \ref{lem5.9}, for each $g\in\tilde\fG^2[p]$ and $p\in\Pi_{a,1}^{(n)}$, we have $W_g^i=W^i_{\hat g}=W^i_{\tilde g}$ for $i=1,2,3.$ Thus by \eqref{liminner} and \eqref{lim}, we obtain \eqref{5.75} equals
	\begin{align}
		&\frac{|j_1|^2}{|\bj|^2}\left(\1_{j'_2=j_1,j'_1=j_2}\1_{t'_1=t_2}B(t'_2,t_1)+\1_{j'_1=j_1,j'_2=j_2}\1_{t'_2=t_2}B(t'_1,t_1)\right).\label{lim1}
	\end{align}
	Since $B(t'_2,t_1)=\1_{t'_2=t_1}c(a,\lambda),\, B(t'_1,t_1)=\1_{t'_1=t_1}c(a,\lambda)$, \eqref{eq:liminner2} follows.
\end{proof}

\section{Asymptotic expansion of the effective coefficient when $d\geq3$}\label{sec5}
Let $k\in\ZZ_0^d$ be fixed throughout the section.
Recall the definition of $T^{N,\pm}$ and $T^{N}_{i,n}$ in \eqref{4.2}. In this section, we prove an asymptotic expansion w.r.t $\lambda$ of the effective coefficient when $d\geq3$. To this end, we introduce the following operators which are independent of $\lambda.$

Let $T^{N,\pm,*} := \lambda^{-1} T^{N,\pm},\, T^{N,*}_{2,n} := \lambda^{-1}T^{N}_{2,n}$. 
First, we prove that
\begin{align}\label{**}
	D = \sum_{l=1}^mf_l\lambda^{2l}+R_m,
\end{align}
where
\begin{align}
	f_l =& (-1)^{l-1}\lim_{n\to\infty}\lim_{N\to\infty} \|(T^{N,*}_{2,n})^{l-1}T^{N,+,*}\sigma_{k,1}\|^2\,,\nonumber\\
	R_m =&(-1)^{m}\lim_{n\to\infty}\lim_{N\to\infty}\langle(-\fL_0)^{\frac12}(T^{N}_{2,n})^{m}T^{N,+}\sigma_{k,1},(-\fL^N_{2,n})^{-1}(-\fL_0)^{\frac12}(T^{N}_{2,n})^{m}T^{N,+}\sigma_{k,1}\rangle\,,\label{eq:expansion}
\end{align}
{and there exists a positive constant $C$, depending only on $d$, such that 
	\begin{align*}
		|f_l|\leq l!C^l,\quad |R_m|\leq(m+1)!C^{m+1}\lambda^{2m+2}.
	\end{align*}}
\begin{proof}[Proof of \eqref{**}]
	We proceed by induction. 
	{For $m=1$, let $v^{N,n}[-k]$ be the solution to \eqref{eq:geneq} with $f_1=\sigma_{k,1}$.
	Then by \eqref{5.3} and \eqref{C.10}, we have
	\begin{align*}
		D=&\lim_{n\to\infty}\lim_{N\to\infty}\frac{1}{2\pi|k|}\|(-\fL_0)^{-\frac12}\fA^N_-v_2^{N,n}[-k]\|
		=\lim_{n\to\infty}\lim_{N\to\infty}\frac{1}{(2\pi|k|)^2}\langle{\fA_+^N\sigma_{k,1}},(-\fL^N_{2,n})^{-1}\fA_+^N\sigma_{k,1}\rangle\nonumber\\
		=&\lim_{n\to\infty}\lim_{N\to\infty}\langle{(-\fL_0)^{\frac12}T^{N,+}\sigma_{k,1}},(-\fL^N_{2,n})^{-1}(-\fL_0)^{\frac12}T^{N,+}\sigma_{k,1}\rangle\,.
	\end{align*} 
	Recall that the functions in the Fock space $\FK$ are mean-zero, which results that $-\fL_0\geq (2\pi)^2.$
	Then by the variational formula \cite[Theorem 4.1]{KLO12} with $\lambda=1$ and $L=(\fL_0+1)+\fA_{2,n}^N$, we have}
	\begin{align*}
		D=&\lim_{n\to\infty}\lim_{N\to\infty}\inf_{\rho\in \FK}\Big\{\|(-\fL_0)^{\frac12}\rho\|^2+\|T^{N,+}\sigma_{k,1}+(-\fL_0)^{-\frac12}\fA^N_{2,n}\rho\|^2\Big\}\nonumber\\
		=&\lim_{N\to\infty}\|T^{N,+}\sigma_{k,1}\|^2-\lim_{n\to\infty}\lim_{N\to\infty}\sup_{\rho\in \FK}\Big\{2\langle(-\fL_0)^{\frac12}T^{N}_{2.n}T^{N,+}\sigma_{k,1},\rho\rangle-\|(-\fL_0)^{\frac12}\rho\|^2-\|(-\fL_0)^{-\frac12}\fA^N_{2,n}\rho\|^2\Big\}\,.
	\end{align*}
Then applying variational formula \cite[Theorem 4.1]{KLO12} again, the above equals
\begin{align*}
	\lambda^2\lim_{N\to\infty}\|T^{N,+,*}\sigma_{k,1}\|^2-\lim_{n\to\infty}\lim_{N\to\infty}\langle(-\fL_0)^{\frac12}T^{N}_{2,n}T^{N,+}\sigma_{k,1},(-\fL^N_{2,n})^{-1}(-\fL_0)^{\frac12}T^{N}_{2,n}T^{N,+}\sigma_{k,1}\rangle\,.
\end{align*}
Thus \eqref{eq:expansion} holds for $m=1$.

Assume now \eqref{eq:expansion} holds for some $m \geq 1$, we prove it for $m+1$.
Applying the variational formula once more to
$\langle(-\fL_0)^{\frac12}(T^{N}_{2,n})^{m}T^{N,+}\sigma_{k,1},(-\fL^N_{2,n})^{-1}(-\fL_0)^{\frac12}(T^{N}_{2,n})^{m}T^{N,+}\sigma_{k,1}\rangle$ in the remainder term, we obtain
	\begin{align*}
		&\inf_{\rho\in \FK}\Big\{\|(-\fL_0)^{\frac12}\rho\|^2+\|(T^{N}_{2,n})^{m}T^{N,+}\sigma_{k,1}+(-\fL_0)^{-\frac12}\fA^N_{2,n}\rho\|^2\Big\}\\
		=&\|(T^{N}_{2,n})^mT^{N,+}\sigma_{k,1}\|^2-\sup_{\rho\in \FK}\Big\{2\langle(-\fL_0)^{\frac12}(T^{N}_{2,n})^{m+1}T^{N,+}\sigma_{k,1},\rho\rangle-\|(-\fL_0)^{\frac12}\rho\|^2-\|(-\fL_0)^{-\frac12}\fA^N_{2,n}\rho\|^2\Big\}\\
		=&\lambda^{2m+2}\|(T^{N,*}_{2,n})^mT^{N,+,*}\sigma_{k,1}\|^2-\langle(-\fL_0)^{\frac12}(T^{N}_{2,n})^{m+1}T^{N,+}\sigma_{k,1},(-\fL^N_{2,n})^{-1}(-\fL_0)^{\frac12}(T^{N}_{2,n})^{m+1}T^{N,+}\sigma_{k,1}\rangle\,.
	\end{align*}
Substitute the above into $R_m$, we derive \eqref{eq:expansion} for $m+1$.
	
	{
	Finally, we estimate $f_l$ and the $\lambda$-dependence of $R_m$ in \eqref{eq:expansion}.
	By \eqref{eq:sector}, there exists a positive constant $C$, depending only on $d$, such that for any $n\geq 2$ and $N\geq1$,
	$$\|(T^{N,*}_{2,n})^{l-1}T^{N,+,*}\sigma_{k,1}\|^2\leq l!C^l \,,$$
	which implies $|f_l|\leq l!C^l.$
	For the remainder $R_m$,
	by the variational formula \cite[Theorem 4.1]{KLO12} and \eqref{eq:sector}, we have
	\begin{align*}
		&\langle(-\fL_0)^{\frac12}(T^{N}_{2,n})^{m}T^{N,+}\sigma_{k,1},(-\fL^N_{2,n})^{-1}(-\fL_0)^{\frac12}(T^{N}_{2,n})^{m}T^{N,+}\sigma_{k,1}\rangle\\
		\leq& \|(T^{N}_{2,n})^{m}T^{N,+}\sigma_{k,1}\|^2\leq (m+1)!C^{m+1}\lambda^{2m+2} \,,
	\end{align*}
	which implies that $|R_m|\leq(m+1)!C^{m+1}\lambda^{2m+2}$.
	Hence the results follows.
}
\end{proof}

Next, we prove Corollary \ref{cor}.
As shown above, Theorem~\ref{thm:main} provides an asymptotic expansion of $D$ in terms of powers of $\lambda$.  
We now show that $D$ differs from both the conjectured constant $\nu_{\mathrm{eff}}-1$ in \cite[Conjecture~6.5]{JP24} and the constant $D_{\mathrm{rep}}$ defined in Remark~\ref{rmk}.

\begin{proof}[Proof of Corollary \ref{cor}]
	(i){ By \eqref{3.9}, \eqref{3.10} and \eqref{**} for $m=1$, we have
	\begin{align}
		D\leq& \lim_{N\to\infty}\|T^{N,+}\sigma_{k,1}\|^2\nonumber\\
		=&\lim_{N\to\infty}\frac{2!\lambda_N^2}{(2\pi)^2|k|^2}\sum_{l_{1:2}}\sum_{k_{1:2}}\frac{(\fR_{k_1,k_2}^N)^2}{4|k_{1:2}|^2}\1_{k_1+k_2=k}
		\left(\left(\hat\Lp(k_1)(k_1+k_2)\right)(l_1)\left(\hat\Lp(k_2)a_{k,1}\right)(l_2)\right.\nonumber\\
		&\left.+\left(\hat\Lp(k_2)(k_1+k_2)\right)(l_2)\left(\hat\Lp(k_1)a_{k,1}\right)(l_1)\right)^2\nonumber\\
		=&\lim_{N\to \infty}\frac{\lambda_N^2}{(2\pi)^2|k|^2}\sum_{p\in\ZZ_0^d}\frac{\fR^N_{p,k-p}(|\hat\Lp(p)k|^2|\hat\Lp(k-p)a_{k,1}|^2+(k\cdot\hat\Lp(p)a_{k,1})(k\cdot\hat\Lp(k-p)a_{k,1}))}{|p|^2+|k-p|^2}\nonumber\\
		=&\frac{\lambda^2}{(2\pi)^2|k|^2}\int_{|x|\leq1}\frac{|\hat\Lp(x)k|^2|\hat\Lp(x)a_{k,1}|^2+(k\cdot\hat\Lp(x)a_{k,1})^2}{2|x|^2}\ud x\,.\label{5.7}
	\end{align}
When $d=3$, for $x\in\RR^3$, let $x'$ denote the projection of $x$ onto span$\{a_{k,1},a_{k,2}\}$. Set $\theta_1$ be the angle between $x$ and $k$, and $\theta_2$ be the angle between $x'$ and $a_{k,1}$. Let $\alpha$ be the angle between $x$ and $a_{k,1}$. Then $\cos\alpha=\sin\theta_1\cos\theta_2$, and
\begin{align}
\lim_{N\to\infty}\|T^{N,+}\sigma_{k,1}\|^2=	&\frac{\lambda^2}{(2\pi)^2}\int_{|x|\leq1}\frac{\sin^2\theta_1\sin^2\alpha+\cos^2\theta_1\cos^2\alpha}{2|x|^2}\ud x\nonumber\\
	=&\frac{\lambda^2}{(2\pi)^2}\int_{|x|\leq1}\frac{\sin^2\theta_1(1+\cos2\theta_1\cos^2\theta_2)}{2|x|^2}\ud x\nonumber\\
	=&\frac{\lambda^2}{2(2\pi)^2}\int_0^{2\pi}\int_0^{\pi}\int_0^1\sin^3\theta_1(1+\cos2\theta_1\cos^2\theta_2)\ud r\ud \theta_1\ud\theta_2
	=\frac{7\lambda^2}{30\pi}\,,\label{5.6}
\end{align}
which is strictly less than $\nu_{\mathrm{eff}}-1=\sqrt{1+\frac{\lambda^2}{\pi}}-1$ in \cite[Conjecture 6.5]{JP24} for $\lambda\leq4$. 
}

	(ii){ By \eqref{**} for $m=2$, we have \begin{align*}
		D = \lambda^2\lim_{N\to \infty}\|T^{N,+,*}\sigma_{k,1}\|^2- \lambda^4\lim_{n\to\infty}\lim_{N\to\infty}\|T^{N,*}_{2,n}T^{N,+,*}\sigma_{k,1}\|^2+O(\lambda^6)\,.
	\end{align*}
Recall that $T^{N,*}_{2,n} = P_{2,n}T^{N,*}P_{2,n}$. Since 
$T^{N,+,*}\sigma_{k,1}\in\FK_2$, for any $n>2$, $T^{N,*}_{2,n}T^{N,+,*}\sigma_{k,1}=T^{N,+,*}T^{N,+,*}\sigma_{k,1}.$ Thus we have
\begin{align}
	D=\lambda^2\lim_{N\to \infty}\|T^{N,+,*}\sigma_{k,1}\|^2- \lambda^4\lim_{N\to\infty}\|T^{N,+,*}T^{N,+,*}\sigma_{k,1}\|^2+O(\lambda^6)\,.\label{series}
\end{align}
For the constant $D_{\mathrm{rep}}$ from Remark \ref{rmk},
since it is the unique positive solution to $x(1+x)=c\lambda^2$ with $c=\lim_{N\to \infty}\|T^{N,+,*}\sigma_{k,1}\|^2,$ we have
$$D_{\mathrm{rep}} = \frac{\sqrt{1+4\lim_{N\to \infty}\|T^{N,+,*}\sigma_{k,1}\|^2\lambda^2}-1}{2}.$$
Then by a Taylor expansion at $\lambda=0$,
$$D_{\mathrm{rep}} = \lambda^2\lim_{N\to \infty}\|T^{N,+,*}\sigma_{k,1}\|^2 - \lambda^4\lim_{N\to \infty}\|T^{N,+,*}\sigma_{k,1}\|^4 + O(\lambda^6)\,.$$
Assume that $D=D_{\mathrm{rep}}$, then one must have $\lim_{N\to \infty}\|T^{N,+}T^{N,+}\sigma_{k,1}\|^2 = \lim_{N\to \infty}\|T^{N,+}\sigma_{k,1}\|^4.$}
Recall the definition of $L^2_n$ for $n\in\NN$ in \eqref{L^2_n} and the inner-product in $\FK$ in \eqref{FK}.
For any $f\in\FK_2$ and $g\in\FK_3$, there holds that
\begin{align*}
	\langle \fA^N_+f,g\rangle_{L^2_3} = \frac{1}{3!} \langle\fA^N_+f,g\rangle=-\frac{1}{3!} \langle f,\fA^N_-g\rangle = -\frac{1}{3} \langle f,\fA^N_-g\rangle_{L^2_2}\,.
\end{align*}
Then we have
\begin{align}
	&\|T^{N,+}T^{N,+}\sigma_{k,1}\|^2\label{5.4}\\
	=&3!\sum_{k_{1:3},\alpha_{1:3}}\left|\langle T^{N,+}T^{N,+}\sigma_{k,1},(\sigma_{k_1,\alpha_1}\otimes \sigma_{k_2,\alpha_2}\otimes \sigma_{k_3,\alpha_3})_{\text{sym}}\rangle_{L^2_3}\right|^2\nonumber\\
	=& \frac{2}{3(2\pi)^4|k|^2} \sum_{k_{1:3},\alpha_{1:3}}\frac{1}{|k_{1:3}|^2}\left|\langle (-\fL_0)^{-1}\fA_+^N\sigma_{k,1},\fA_-^N(\sigma_{k_1,\alpha_1}\otimes \sigma_{k_2,\alpha_2}\otimes \sigma_{k_3,\alpha_3})_{\text{sym}}\rangle_{L^2_2}\right|^2\nonumber\,,
\end{align}
where 
\begin{align}\label{sym}
	(\sigma_{k_1,\alpha_1}\otimes \sigma_{k_2,\alpha_2}\otimes \sigma_{k_3,\alpha_3})_{\text{sym}} = \frac{1}{3!}\sum_{\sigma\in P_3}\sigma_{k_{\sigma(1)},\alpha_{\sigma(1)}}\otimes \sigma_{k_{\sigma(2)},\alpha_{\sigma(2)}}\otimes \sigma_{k_{\sigma(3)},\alpha_{\sigma(3)}},
\end{align}
and $P_3$ denotes the set of permutations of $(1,2,3)$.
By \eqref{A_-}, the above equation equals
\begin{align*}
	&\frac{6}{(2\pi)^4|k|^2} \sum_{k_{1:3},\alpha_{1:3}}\frac{1}{|k_{1:3}|^2}\left|\sum_{q=1}^2\langle (-\fL_0)^{-1}\fA_+^N\sigma_{k,1},\fA_-^N[q](\sigma_{k_1,\alpha_1}\otimes \sigma_{k_2,\alpha_2}\otimes \sigma_{k_3,\alpha_3})_{\text{sym}}\rangle_{L^2_2}\right|^2\,.
\end{align*}
Recall the definition of $\fA_-^N[q]$ in \eqref{Aq}. Then 
\begin{align}
	&\sum_{q=1}^2\langle (-\fL_0)^{-1}\fA_+^N\sigma_{k,1},\fA_-^N[q](\sigma_{k_1,\alpha_1}\otimes \sigma_{k_2,\alpha_2}\otimes \sigma_{k_3,\alpha_3})\rangle_{L^2_2}\nonumber\\
	=&\lambda_N^2\1_{\sum_{i=1}^3k_i=k}\fR^N_{k_1,k_2}\fR^N_{k_1+k_2,k_3}\frac{k_1\cdot a_{k_2,\alpha_2}}{|k_1+k_2|^2+|k_3|^2}\left((k\cdot \hat\Lp(k_1+k_2)a_{k_1,\alpha_1}) (a_{k_3,\alpha_3}\cdot a_{k,1})\right.\nonumber\\
	&\left.+(a_{k,1}\cdot \hat\Lp(k_1+k_2)a_{k_1,\alpha_1}) (a_{k_3,\alpha_3}\cdot k)\right)\nonumber\\
	=:&\lambda_N^2\1_{\sum_{i=1}^3k_i=k}\fR^N_{k_1,k_2}\fR^N_{k_1+k_2,k_3}C_{k_{1:3},\alpha_{1:3}}\,.\label{C}
\end{align}
Substituting the above equation into \eqref{5.4}. By \eqref{sym} and performing the change of variable $m_{1:3}=k_{\sigma'(1:3)}$ and $l_{1:3}=\alpha_{\sigma'(1:3)}$, we have
\begin{align*}
	&\|T^{N,+}T^{N,+}\sigma_{k,1}\|^2\\
	=&\frac{\lambda_N^4}{6(2\pi)^4|k|^2}\sum_{\sigma',\sigma\in P_3} \sum_{k_{1:3},\alpha_{1:3}}\frac{\1_{\sum_{i=1}^3k_i=k}}{|k_{1:3}|^2}\fR^N_{k_{\sigma'(1)},k_{\sigma'(2)}}\fR^N_{k_{\sigma'(1)}+k_{\sigma'(2)},k_{\sigma'(3)}}
	\fR^N_{k_{\sigma(1)},k_{\sigma(2)}}\fR^N_{k_{\sigma(1)}+k_{\sigma(2)},k_{\sigma(3)}}\times\\
	&C_{k_{\sigma'(1:3)},\alpha_{\sigma'(1:3)}}C_{k_{\sigma(1:3)},\alpha_{\sigma(1:3)}}\\
	=&\frac{\lambda_N^4}{(2\pi)^4|k|^2}\sum_{\sigma\in P_3} \sum_{m_{1:3},l_{1:3}}\frac{\1_{\sum_{i=1}^3m_i=k}}{|m_{1:3}|^2}\fR^N_{m_1,m_2}\fR^N_{m_1+m_2,m_3}
	\fR^N_{m_{\sigma(1)},m_{\sigma(2)}}\fR^N_{m_{\sigma(1)}+m_{\sigma(2)},m_{\sigma(3)}}\times\\
	&C_{m_{1:3},l_{1:3}}C_{m_{\sigma(1:3)},l_{\sigma(1:3)}}\,.
\end{align*}
Recall the definition of $\lambda_N$ in \eqref{lambda}. 
By the definition of $C_{m_{1:3},l_{1:3}}$ in \eqref{C}, we have $C_{m_{1:3}/N,l_{1:3}}=NC_{m_{1:3},l_{1:3}}$. Then
by taking $v_i=N^{-1}m_i$ for $i=1,2,3$, as $N\to\infty$, we have
\begin{align}
	&\lim_{N\to \infty}\|T^{N,+}T^{N,+}\sigma_{k,1}\|^2\nonumber\\
	=&\lim_{N\to \infty}\frac{\lambda^4}{(2\pi)^4|k|^2}\frac{1}{N^6}\sum_{\sigma\in P_3}\sum_{l_{1:3}=1}^3\sum_{\substack{v_1+v_2+v_3=k/N\\ v_{1:3}\in N^{-1}\ZZ_0^d}} \frac{\fR^1_{v_1,v_2}\fR^1_{v_1+v_2,v_3}	\fR^1_{v_{\sigma(1)},v_{\sigma(2)}}}{|v_{1:3}|^2}C_{v_{1:3},l_{1:3}}C_{v_{\sigma(1:3)},l_{\sigma(1:3)}}\nonumber\\
	=&\lim_{N\to \infty}\frac{\lambda^4}{(2\pi)^4|k|^2}\sum_{\sigma\in P_3} \sum_{l_{1:3}=1}^3\int_{|x_1|,|x_2|,|x_1+x_2|\leq1}\frac{C_{x_{1:3},l_{1:3}} C_{x_{\sigma(1:3)},l_{\sigma(1:3)}}}{|x_1|^2+|x_2|^2+|x_1+x_2|^2}\ud x_1\ud x_2\,,\label{5.8}
\end{align}
where $x_3=\frac{k}{N}-x_1-x_2$.
By direct evaluation of the integral via \texttt{Mathematica} shows that the limit is approximately $\frac{8.588}{2(2\pi)^4} \lambda^4$. This is strictly less than $\lambda^4 \lim_{N\to \infty}\|T^{N,+} \sigma_{k,1}\|^4$, whose value by \eqref{5.6} is $\left(\frac{7}{30\pi}\right)^2 \lambda^4$.  
Therefore, $D \neq D_{\mathrm{rep}}$.
\end{proof} 

\begin{remark}
	When $d=3$, the proof of Theorem \ref{thm:main} follows from an expansion for $(-\fL_{2,n}^N)^{-1}$.
	Such argument also holds in two dimensional case. In fact,
	when $d=2$, one can adapt the argument of \cite[Lemma~3.4]{CET23b} together with Proposition~\ref{prop:A} to show that for any $\psi,\varphi\in\FK_n$, and $M\in\NN$,
	\begin{align}\label{rep}
		\langle(-T^{N,-}((32\pi)^{-1}L^N(-\fL_0))^MT^{N,+}-\frac{1}{M+1}((32\pi)^{-1}L^N(-\fL_0))^{M+1})\psi,\varphi\rangle\lesssim_M\frac{1}{\log N}\|\varphi\|\|\psi\|,
	\end{align} 
where $L^N(x)=\frac{\lambda^2}{\log N}\log\left(1+N^2x^{-1}\right).$
	Thus we can use the diagonal operator $L^N(-\fL_0)$ to compute the limit of  $\langle(T^N_{2,n})^mT^{N,+}\sigma_{k,1},$
	$T^{N,+}\sigma_{-k,1}\rangle$ in the series in \eqref{C.3}.
{In this case, the Taylor expansion of $D$ w.r.t $\lambda$ in \eqref{series} conincides with the expansion of $D_{\mathrm{rep}}$. More pricisely, by the Replacement Lemma \ref{Lem5.1} and \eqref{rep}, we can obtain that $$D_{\mathrm{rep}}=\lim_{N\to \infty}G(L^N((2\pi)^2|k|^2))= \sqrt{\frac{\lambda^2}{8\pi}+1}-1 = \sqrt{2\lim_{N\to \infty}\|T^{N,+,*}\sigma_{k,1}\|^2\lambda^2+1}-1.$$
	Then by Taylor expansion w.r.t $\lambda$ at $\lambda=0$,
	we have
	\begin{align}\label{series1}
		D_{\mathrm{rep}}=\lambda^2\lim_{N\to \infty}\|T^{N,+,*}\sigma_{k,1}\|^2 -\lambda^4\lim_{N\to \infty}\frac12\|T^{N,+,*}\sigma_{k,1}\|^4+O(\lambda^6).
	\end{align}
While by \eqref{rep}, 
$$\lim_{N\to \infty}\|T^{N,+,*}T^{N,+,*}\sigma_{k,1}\|^2=\lim_{N\to \infty}\frac{1}{2}((32\pi)^{-1}L^N((2\pi)^2|k|^2))^2 = \frac12\lim_{N\to \infty}\|T^{N,+,*}\sigma_{k,1}\|^4.$$
More caculation also implies the Taylor expansion of $D$ w.r.t $\lambda$ in \eqref{series} conincides with the expansion of $D_{\mathrm{rep}}$ in \eqref{series1}.}
	
	For $d\geq3$, this is not the case, since when we consider the action of $T^{N,-}T^{N,+}$, there is no weak coupling constant $\frac{1}{\log N}$ on R.H.S. of \eqref{4.1}
	such that the off-diagonal part can be omitted.
\end{remark}

\section{$d=2$: the Replacement Lemma}\label{sec6}
When $d=2$, similar as \cite{CET23b} and \cite{CGT24},
we can approximate $(-\fL^N)^{-1}$ as $N\to\infty$ by a diagonal operator, as established by the following Replacement Lemma.
Let $L^N$ be the function defined on $[(2\pi)^2,\infty)$ as
$L^N(x):=\lambda_N^2\log\left(1+N^2x^{-1}\right),$
and $\fG^N$ be the operator on $\FK$ given by
$\fG^N:=G(L^N(-\fL_0)),$ where 
\begin{align}\label{eq:G}
	G(x)=\sqrt{\frac{x}{16\pi}+1}-1\,.
\end{align}
Now we prove the replacement lemma.
{Compared with scalared euqation such as the anisotropic KPZ equation or the stochastic Burgers equation, the action of $\fA^N_+$ on $\psi$ in \eqref{3.10} has the Leray projection and generates different terms in the vector components $l_i$ and $l_j$. As a consequence, the diagonal part of $\langle \fS^N \fA_{+}^N\psi_1,\fA_{+}^N\psi_2\rangle$, defined in \eqref{3.27}, {consists of two terms (see \eqref{e:Diag} and \eqref{e:Diag_2} below), one involving the inner product between $\hat\psi_1$ and $\hat\psi_2$, and the other involving the inner product between $\hat\psi_i$ and the momentum.} For the second term, we use the divergence-free properties of $\psi_1$ and $\psi_2$ to convert the latter inner-product into one between $\hat\psi_1$ and $\hat\psi_2$ in \eqref{6.13} below.}
The remaining steps of the proof then follow by an argument similar to that in \cite{CGT24}.

\begin{lem}[Replacement Lemma]\label{Lem5.1}
There exists a constant $C>0$ independent of $N$ such that for every $\psi_1,\psi_2\in\FK_n\cap \fS(\TT^{2n},\RR^{2n})$ and $n\in\NN$, we have 
\begin{align}
|\langle[-\fA_{-}^N(-\fL_0-\fL_0\fG^N)^{-1}\fA_{+}^N+\fL_0\fG^N]\psi_1,\psi_2\rangle|
\leq C\lambda_N^2\|\fN(-\fL_0)^{\frac12}\psi_1\|\|\fN(-\fL_0)^{\frac12}\psi_2\|.\label{eq:rep}
\end{align}
\end{lem}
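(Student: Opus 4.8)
The plan is to first rewrite the bracket in \eqref{eq:rep} in a manifestly symmetric form. Set $\fS^N:=(-\fL_0-\fL_0\fG^N)^{-1}$, which is a bounded, positive, self-adjoint operator since $-\fL_0\geq(2\pi)^2$ and $\fG^N\geq0$, and which is diagonal in the chaos decomposition because $\fG^N=G(L^N(-\fL_0))$ is a function of $-\fL_0$. Using $\fA_-^N=-(\fA_+^N)^{*}$ from Theorem~\ref{thm.Geneator_in_Fourier} together with the self-adjointness of $\fS^N$, I obtain
$$\langle -\fA_-^N\fS^N\fA_+^N\psi_1,\psi_2\rangle=\langle \fS^N\fA_+^N\psi_1,\fA_+^N\psi_2\rangle,$$
so the quantity to estimate becomes $\langle\fS^N\fA_+^N\psi_1,\fA_+^N\psi_2\rangle-\langle(-\fL_0)\fG^N\psi_1,\psi_2\rangle$. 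Expanding $\fA_+^N\psi_1$ and $\fA_+^N\psi_2$ through the explicit kernel \eqref{3.10} and inserting the diagonal operator $\fS^N$, I would split the resulting momentum sum into a \emph{diagonal} part, in which the momentum configuration carried by $\psi_1$ coincides with that of $\psi_2$ and only the freshly created pair $(p,k_q-p)$ is summed freely, and an \emph{off-diagonal} remainder.

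For the diagonal part, the two Leray factors $\hat\Lp(k_i)(k_i+k_j)$ and $\hat\Lp(k_j)\hat\psi(\cdot)$ in \eqref{3.10} produce, after contraction, two structurally different scalar terms: one proportional to $\langle\hat\psi_1(k_q),\hat\psi_2(k_q)\rangle$ weighted by a factor such as $|\hat\Lp(p)k_q|^2$, and a second one in which the split momentum is paired with $\hat\psi_1,\hat\psi_2$ through the Leray matrices. For the latter I would invoke the divergence-free constraint $k_q\cdot\hat\psi_i(l,k_q)=0$ (equivalently the representation \eqref{eq:div-free-basis}) to convert the momentum pairing back into $\langle\hat\psi_1(k_q),\hat\psi_2(k_q)\rangle$. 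Summing the created momentum $p$ over the ball $|p|\lesssim N$ then yields, in $d=2$, a logarithmically divergent factor that combines with the weak-coupling choice $\lambda_N^2=\lambda^2/\log N$ in \eqref{lambda} to produce the finite kernel $L^N(-\fL_0)$, the angular integration of the squared Leray factors contributing the geometric constant $\tfrac1{16\pi}$. The function $G$ in \eqref{eq:G} is designed exactly so that $(1+G(x))^2=1+x/(16\pi)$, equivalently $G'(x)(1+G(x))=\tfrac1{32\pi}$; this self-consistency makes the accumulated diagonal contribution telescope, so that the diagonal part matches $\langle(-\fL_0)\fG^N\psi_1,\psi_2\rangle$ up to an error of size $\lesssim\lambda_N^2\|\fN(-\fL_0)^{1/2}\psi_1\|\,\|\fN(-\fL_0)^{1/2}\psi_2\|$, the number operator $\fN$ arising because the split slot $q$ ranges over the $n$ factors.

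For the off-diagonal remainder I would bound each summand by Cauchy--Schwarz and estimate the resulting momentum sums following \cite[Section 3]{CGT24} and \cite[Lemma 3.4]{CET23b}: the absence of resonance between the configurations of $\psi_1$ and $\psi_2$ gains an extra inverse power of the relevant energies, and together with the weak-coupling factor this produces a bound of the same order $\lambda_N^2\|\fN(-\fL_0)^{1/2}\psi_1\|\,\|\fN(-\fL_0)^{1/2}\psi_2\|$, with the graded sector condition of Lemma~\ref{lem:sector} controlling the dependence on $n$. Combining the diagonal and off-diagonal bounds yields \eqref{eq:rep}.

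The main obstacle I expect is precisely the diagonal analysis in the presence of the Leray projection: unlike the scalar stochastic Burgers case of \cite{CGT24}, the projection couples the vector components and produces the second diagonal term, which is not directly proportional to $\langle\hat\psi_1,\hat\psi_2\rangle$. Reabsorbing it via the divergence-free condition, and verifying that the angular integrals of the squared Leray factors assemble precisely into the constant $\tfrac1{16\pi}$ dictated by \eqref{eq:G}, is the delicate point: an incorrect constant here would alter the effective coefficient $D$ and destroy the self-consistency with $G$.
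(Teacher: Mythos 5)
Your proposal follows essentially the same route as the paper's proof: rewriting via $\fA_-^N=-(\fA_+^N)^{*}$ and the self-adjoint diagonal operator $\fS^N$, splitting the momentum sum into diagonal and off-diagonal parts, using the divergence-free constraint together with the $d=2$ rank-one structure $\hat\psi_i(\cdot,k)\parallel a_{k,1}$ (the paper's step \eqref{6.13}) to convert the Leray-paired diagonal term back into $\overline{\hat\psi_1}\cdot\hat\psi_2$, and then matching the logarithmically divergent Riemann sum against $\fL_0\fG^N$ through the self-consistency $G'(x)(1+G(x))=\tfrac1{32\pi}$, which is exactly the content of Proposition \ref{prop:A}, with the off-diagonal remainder bounded as in \cite[(2.27)--(2.28)]{CGT24}. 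The only cosmetic discrepancy is that the paper controls the $n$-dependence by direct combinatorial counting absorbed into the number operator $\fN$ rather than by the graded sector condition of Lemma \ref{lem:sector}, which is not invoked in this step.
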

\begin{proof}
By $(\fA^N_+)^{*}=-\fA_-^N$, the first part of the left hand side of \eqref{eq:rep} can be written as 
\begin{align}\label{3.26}
\langle(-\fL_0-\fL_0\fG^N)^{-1}\fA_{+}^N\psi_1,\fA_{+}^N\psi_2\rangle.
\end{align}
Let $\fS^N:=(-\fL_0-\fL_0\fG^N)^{-1}$ and 
denote $\sigma^N =(\sigma^N_n)_{n\ge1}$ as its Fourier multiplier, i.e. for $\psi\in\Gamma L^2_n$ 
$\fF(\fS^N \psi)(l_{1:n},k_{1:n})=\sigma^N _n(k_{1:n})\hat \psi(l_{1:n},k_{1:n})$, where 
\begin{align}\label{sigma}
	\sigma^N _n(k_{1:n})&=\frac1{(2\pi)^2|k_{1:n}|^2(1+G(L^N((2\pi)^2|k_{1:n}|^2)))}\,.
\end{align}
Bt the form of $\fA_{+}^N$ in \eqref{3.10}, \eqref{3.26} can be splitted into two {diagonal parts}, corresponding to making the same or inverse choice for the indices $i,j$ in the two occurrences of $\fA_{+}^N$, 
an {off-diagonal part of type 1}, corresponding to $\{i=i',j\neq j'\}$ or $\{j=j',i\neq i'\}$ or $\{i=j',j\neq i'\}$ or $\{j=i',i\neq j'\}$, 
and {off-diagonal part of type 2}, corresponding to $i,j,i',j'$ are all different. 
Recall that for $k\in\ZZ_0^d,\, x,y\in\RR^d$, $x\cdot(\hat\Lp(k)y)=x\cdot y-\frac{(x\cdot k)(y\cdot k)}{|k|^2}$\,. Then \eqref{3.26} can be written as 
\begin{align}\label{3.27}
	\langle \fS^N \fA_{+}^N\psi_1,\fA_{+}^N\psi_2\rangle=\sum_{i=1}^2\langle \fS^N \fA_{+}^N\psi_1,\fA_{+}^N\psi_2\rangle_{\text{diag}_i} +\sum_{i=1}^2 \langle \fS^N \fA_{+}^N\psi_1,\fA_{+}^N\psi_2\rangle_{\text{off}_i},
\end{align}
where the diagonal part 1, corresponding to $\{i=i',j=j^{'}\}$, is defined as 
\begin{align}\label{e:Diag}
	&\langle \fS^N \fA_{+}^N\psi_1,\fA_{+}^N\psi_2\rangle_{\text{diag}_1}
	:={n!n(2\pi)^2\lambda_N^2}\sum_{l_{2:n}} \sum_{k_{1:n}}\sum_{l+m=k_1}\sigma^N_{n+1}(l,m,k_{2:n})\fR_{l,m}^N\left[|k_1|^2-\frac{(k_1\cdot m)^2}{|m|^2}\right]\\
	&\left[\overline{\hat\psi_1((\cdot,k_1),(l_{2:n},k_{2:n}))}\cdot{\hat\psi_2((\cdot,k_1),(l_{2:n},k_{2:n}))}-\frac{(l\cdot\overline{\hat\psi_1((\cdot,k_1),(l_{2:n},k_{2:n}))})(l\cdot{\hat\psi_2((\cdot,k_1),(l_{2:n},k_{2:n}))})}{|l|^2}\right]\nonumber,
\end{align}
and the diagonal part 2, corresponding to $\{i=j^{'},j=i'\}$, is defined as 
\begin{align}
	\langle \fS^N \fA_{+}^N\psi_1,\fA_{+}^N\psi_2\rangle_{\text{diag}_2}:=& {n!n(2\pi)^2\lambda_N^2}\sum_{l_{2:n}}\sum_{k_{1:n}}\sum_{l+m=k_1}\sigma^N_{n+1}(l,m,k_{2:n}) (\fR_{l,m}^N)^2 \times\label{e:Diag_2}\\
	&\left[k_1\cdot\hat\psi_2((\cdot,k_1),(l_{2:n},k_{2:n}))-\frac{(k_1\cdot m)(\hat\psi_2((\cdot,k_1),(l_{2:n},k_{2:n}))\cdot m)}{|m|^2}\right]\times\nonumber\\
	&\left[k_1\cdot\hat\psi_1((\cdot,k_1),(l_{2:n},k_{2:n}))-\frac{(k_1\cdot l)(\hat\psi_1((\cdot,k_1),(l_{2:n},k_{2:n}))\cdot l)}{|l|^2}\right].\nonumber
\end{align}
Since $\psi_i$ is devergence-free,  $k_1\cdot{\hat\psi_i((\cdot,k_{1}),(l_{2:n},k_{2:n}))}=\fF(\nabla\cdot\psi_i((\cdot,x_{1}),(l_{2:n},k_{2:n})))(k_1)=0.$
Then we have
\begin{align}
	&\sum_{i=1}^2\langle \fS^N \fA_{+}^N\psi_1,\fA_{+}^N\psi_2\rangle_{\text{diag}_i}={n!n(2\pi)^2\lambda_N^2}\sum_{l_{2:n}}\sum_{k_{1:n}}\sum_{l+m=k_1}\sigma^N_{n+1}(l,m,k_{2:n}) (\fR_{l,m}^N)^2 \times\nonumber\\
	&\left(\left[|k_1|^2-\frac{(k_1\cdot m)^2}{|m|^2}\right]\overline{\hat\psi_1((\cdot,k_1),(l_{2:n},k_{2:n}))}\cdot{\hat\psi_2((\cdot,k_1),(l_{2:n},k_{2:n}))}\right.\nonumber\\
	&\left.-\frac{|m|^2|k_1|^2-(k_1\cdot m)^2+(k_1\cdot m)(k_1\cdot l)}{|m|^2|l|^2}(l\cdot\overline{\hat\psi_1((\cdot,k_1),(l_{2:n},k_{2:n}))})(l\cdot{\hat\psi_2((\cdot,k_1),(l_{2:n},k_{2:n}))})\right).\label{4.8}
\end{align}
Since $|x\cdot(\hat\Lp(k)y)|\lesssim|x||y|$, off-diagonal parts of type $1$ and $2$ respectively have upper bounds
\begin{align}
	|\langle \fS^N \fA_{+}^N\psi_1,\fA_{+}^N\psi_2\rangle_{\text{off}_1}|
		\lesssim&\lambda_N^2n!n(n-1)\sum_{k_{1:n+1}} \sigma^N_{n+1}(k_{1:n+1})\fR^N_{k_1,k_2}\fR^N_{k_1,k_3}\times\nonumber\\
	&|k_1+k_2||k_1+k_3||\hat{\psi}_1(k_1+k_2,k_{3:n+1})||{\hat\psi_2(k_1+k_3,k_2,k_{4:n+1})}|,\label{3.28}
\end{align}
and
\begin{align}
	|\langle \fS^N \fA_{+}^N\psi_1,\fA_{+}^N\psi_2\rangle_{\text{off}_2}|\lesssim&\lambda_N^2n!n(n-1)(n-2)\sum_{k_{1:n+1}} \sigma^N_{n+1}(k_{1:n+1})\fR^N_{k_1,k_2}\fR^N_{k_3,k_4}\times\nonumber\\
	&|k_1+k_2||k_3+k_4||\hat{\psi}_1(k_1+k_2,k_{3:n+1})||\hat\psi_2(k_3+k_4,k_{1:2},k_{5:n+1})|.\label{3.29}
\end{align}
Note that the right hand side of \eqref{3.28} and \eqref{3.29} have a similar structure as \cite[(2.27) and (2.28)]{CGT24}. Then by the identical argument, we can derive that
\begin{align}
	\sum_{i=1}^2|\langle\fS^N \fA_{+}^N\psi_1, \fA_{+}^N\psi_2\rangle_{\mathrm{off}_i}|\lesssim\lambda_N^2n^2\|(-\fL_0)^{\frac12}\psi_1\|\|(-\fL_0)^{\frac12}\psi_2\|.
\end{align}

It suffices to estimate
\begin{align}
\Big|\sum_{i=1}^2\langle\fS^N \fA_{+}^N\psi_1, \fA_{+}^N\psi_2\rangle\rangle_{\text{diag}_i}+ \langle \fL_0\fG^N\psi_1,\psi_2\rangle\Big|\,.\label{e:Di}
\end{align}
We focus on \eqref{4.8} first.
Let $\theta_1,\theta_2$ be the angles between $k_1$ and $m$, $l$ respectively. By \eqref{eq:div}, $\hat\psi_i((\cdot,k),(l_{2:n},k_{2:n}))=(\psi_i(\cdot,(l_{2:n},k_{2:n})))_{k,1}a_{k,1}$ for fixed $l_{2:n}$ and $k_{2:n}$.
Recall that $a_{k,1}=a_{-k,1}$.
Then we have
\begin{align}
	&(l\cdot\overline{\hat\psi_1((\cdot,k_1),(l_{2:n},k_{2:n}))})(l\cdot{\hat\psi_2((\cdot,k_1),(l_{2:n},k_{2:n}))})\nonumber\\
	=&(\psi_1(\cdot,(l_{2:n},-k_{2:n})))_{-k_1,1}(\psi_2(\cdot,(l_{2:n},k_{2:n})))_{k_1,1}(l\cdot a_{k_1,1})^2\nonumber\\
	=&|l|^2\sin^2\theta_2\overline{\hat\psi_1((\cdot,k_1),(l_{2:n},k_{2:n}))}\cdot\hat\psi_2((\cdot,k_1),(l_{2:n},k_{2:n}))\,.\label{6.13}
\end{align}
where we used that $a_{k_1,1}$ is a unit vector vertical to $k_1$.
Thus \eqref{4.8} equals
\begin{align*}
	{n!n(2\pi)^2\lambda_N^2}\sum_{l_{1:n}}\sum_{k_{1:n}}&|k_1|^2\overline{\hat\psi_1(l_{1:n},k_{1:n})}{\hat\psi_2(l_{1:n},k_{1:n})}\times\\
	&\sum_{l+m=k_1}\sigma^N_{n+1}(l,m,k_{2:n})\fR_{l,m}^N \left[\sin^2\theta_1-\sin^2\theta_2\left(\sin^2\theta_1+\frac{|l|}{|m|}\cos\theta_1\cos\theta_2\right)\right].
\end{align*}
Substitute it into \eqref{e:Di} and we obtain
\begin{align}
	&\left|n!n(2\pi)^2\sum_{l_{1:n}}
	\sum_{k_{1:n}}|k_1|^2\overline{\hat\psi_1(l_{1:n},k_{1:n})}{\hat\psi_2(l_{1:n},k_{1:n})}\left[\lambda_N^2\sum_{l+m=k_1}\sigma^N_{n+1}(l,m,k_{2:n}) \fR_{l,m}^N\times\right.\right.\nonumber\\
	&\left.\left.\left(\sin^2\theta_1-\sin^2\theta_2\left(\sin^2\theta_1+\frac{|l|}{|m|}\cos\theta_1\cos\theta_2\right)\right)-G(L^N((2\pi)^2|k_{1:n}|^2))\right]\right|.\label{4.14}
\end{align}
By Proposition \ref{prop:A} and the Cauchy-Schwarz inequality, there exists a constant $C$ such that \eqref{4.14} is bounded by
\begin{align}
	C\lambda_N^2n!n(2\pi)^2\sum_{l_{1:n}}
	\sum_{k_{1:n}}|k_1|^2|{\hat\psi_1(l_{1:n},k_{1:n})}||{\hat\psi_2(l_{1:n},k_{1:n})}|\lesssim
	\lambda_N^2\|(-\fL_0)^{\frac12}\psi_1\|\|(-\fL_0)^{\frac12}\psi_2\|.\nonumber
\end{align}
Thus the proof is complete.
\end{proof}
By the Replacement Lemma, we can employ the approximating operator $-\fL_0\fG^N$ to construct $v^{N,n}$ in Theorem \ref{thm:F-D}, which is an approximation to the solution $u^{N,n}$ to \eqref{eq:geneq}.
For fixed $i=2$ or $3$, $N>0,$ and $i\leq n\in\NN$, define $v^{N,n}\in\oplus_{j=i}^n\FK_j$ as
\begin{align*}
	v^{N,n}=(-\fL_0-\fL_0\fG^N)^{-1}\fA_{+}^NP^{n-1}_iv^{N,n}
	+(-\fL_0-\fL_0\fG^N)^{-1}\fA_{+}^Nf_{i-1},
\end{align*}
where $f_{i-1}$ is given by \eqref{eq:fi}.
{Then following the argument in \cite[Theorem 2.7 and Theorem 2.11]{CGT24}
we can prove Theorem \ref{thm:F-D} when $d=2$ and derive the constant $D$ in \eqref{eq:limeq}.}

\appendix
\renewcommand{\appendixname}{Appendix~\Alph{section}}
\renewcommand{\theequation}{A.\arabic{equation}}
\section{The generator on the Fock space}\label{appenA}
In this appendix, we prove Theorem \ref{thm.Geneator_in_Fourier} and Lemma \ref{lem:sector} using a similar argument as \cite{CES21} and \cite{CGT24}.

\begin{proof}[Proof of Theorem \ref{thm.Geneator_in_Fourier}]
	By linearity, we only need to consider $F=I_n(h^{\otimes n})= H_n(\eta(h))$ with $h \in \HH\cap\fS(\TT^d,\RR^d)$ such that $\|h\|=1$. 
	By It\^o's formula, we obtain
	\begin{align}
		\fL_0 F &= \sum_{k\in\ZZ_0^d}(2\pi|k|)^2(-\hat{\eta}(-k)\cdot D_k+D_{-k}\cdot D_k)F,\label{Malli1}\\
		\fA^N F&=-\lambda_N\iota2\pi\sum_{l,m\in\ZZ_0^d}\mathcal{R}_{l,m}^N\left[(l+m)\cdot(\hat{\eta}(l)\otimes\hat{\eta}(m))\right]\cdot D_{-l-m}F.\label{Malli2}
	\end{align}
{Following the arguments in the proof of \cite[Lemma 3.5]{CES21}, we can show that $\fL_0I_n(h^{\otimes n})=I_n(\Delta h^{\otimes n})$.}
Set $\tilde{e}^l_k$ be the $d-$dim vector whose $l$-th component is $e_k$ and others are 0.
	For $\fA^N$, by \eqref{Malli2} we have 
	\begin{align*}
		\fA^NI_n(h^{\otimes n})=
		-\lambda_N\iota2\pi nI_{n-1}(h^{\otimes (n-1)})\sum_{i,j=1}^d\sum_{l,m\in\ZZ_0^d}\mathcal{R}_{l,m}^N(l+m)_iI_1(\Lp\tilde{e}_{-l}^i)I_1(\Lp\tilde{e}_{-m}^j)\hat{h}(j,-l-m).
	\end{align*}
	Recall from \cite{Nua06} that for any $\varphi_p\in\FK_p$ and $\varphi_q\in\FK_q$,
\begin{equation}\label{chainrule}
	I_p(\varphi_p)I_q(\varphi_q) = \sum_{r=0}^{p\wedge q}r!\binom{p}{r}\binom{q}{r} I_{p+q-2r}(\text{sym}(\varphi_p\otimes_{r}\varphi_q)),
\end{equation}
where $\text{sym}(\cdot)$ is the symmetrization of the function in $\FK$ and 
\begin{align*}
	&\varphi_p\otimes_r \varphi_q((i_{1},x_{1}),\dots,(i_{p+q-2r},x_{p+q-2r})) \\
	&\hspace{5pt}= \langle \varphi_p((i_1,x_1),\dots,(i_{p-r},x_{p-r}),\cdot), \varphi_q((i_{p-r+1},x_{p-r+1}),\dots,(i_{p+q-2r},x_{p+q-2r}),\cdot)\rangle_{L^2(\TT^d,\RR^d)^{\otimes r}}. 
\end{align*}
	By \eqref{chainrule} and $\langle\Lp\tilde{e}^i_{-l}, \Lp\tilde{e}^j_{-m}\rangle_{L^2(\TT^d,\RR^d)}$ vanishes if $l\neq-m$, the above equation equals
	\begin{align*}
		-\lambda_N\iota2\pi nI_{n-1}(h^{\otimes (n-1)})I_2\left(\sum_{i,j}\sum_{l,m\in\ZZ_0^d}\fR^N_{l,m}(l+m)_i\text{sym}(\Lp\tilde{e}^i_{-l}\otimes \Lp\tilde{e}^j_{-m})\hat{h}(j,-l-m)\right).
	\end{align*}
	Again by the product rule \eqref{chainrule}, we get
	\begin{align*}
		\fA^NI_n(h^{\otimes n})
		=&-\lambda_N\iota2\pi n
		I_{n+1}\left(\sum_{i,j}\sum_{l,m\in\ZZ_0^d}\fR^N_{l,m}(l+m)_i\hat{h}_j(-l-m)\text{sym}(h^{\otimes (n-1)}\otimes \Lp\tilde{e}^i_{-l}\otimes \Lp\tilde{e}^j_{-m})\right)\\
		&-\lambda_N\iota2\pi n(n-1)
		I_{n-1}\left(\sum_{i,j}\sum_{l,m\in\ZZ_0^d}\fR^N_{l,m}(l+m)_i\hat{h}_j(-l-m)\times\right.\\
		&\hspace{100pt}\text{sym}\left.(h^{\otimes (n-2)}\otimes \Lp\tilde{e}^i_{-l} \hat{h}_j(m)+h^{\otimes (n-2)}\otimes \Lp\tilde{e}^j_{-m} \hat{h}_i(l))\right)\\
		&-\lambda_N\iota2\pi n^2(n-1)
		I_{n-3}\left(\sum_{i,j}\sum_{l,m\in\ZZ_0^d}\fR^N_{l,m}(l+m)_i\hat{h}_j(-l-m)h^{\otimes (n-3)}\hat{h}_i(l)\hat{h}_j(m)\right).
	\end{align*}
	The third term disappears since $B^N(h)(h)=0.$
	For the first term, by taking the Fourier transform, \eqref{3.10} holds.
	For the second term, since $\fF(\Lp\tilde{e}^i_{-l})(l_{n-1},k_{n-1})=\1_{k_{n-1}=-l}\hat\Lp_{l_{n-1},i}(k_{n-1})$, we have
	\begin{align*}
		&\fF(\fA^N_{-}h^{\otimes n})(l_{1:n-1},k_{1:n-1})\\
		=&\lambda_N\iota2\pi n(n-1)
		\text{sym}\left(\sum_{i,j}\sum_{p+q=k_{n-1}}\fR^N_{p,q}\hat\Lp_{l_{n-1},i}(k_{n-1})p_{i}\hat{h}_{j}(p)\hat{h}_{j}(q)\widehat{h^{\otimes(n-2)}}(l_{1:n-2},k_{1:n-2})+\right.\\
		&\left.\hspace{90pt}+\sum_{i,j}\sum_{p+q=k_{n-1}}\fR^N_{p,q}\hat\Lp_{l_{n-1},i}(k_{n-1})p_j\hat{h}_{i}(p)\hat{h}_{j}(q)\widehat{h^{\otimes(n-2)}}(l_{1:n-2},k_{1:n-2})\right).
	\end{align*}
	By interchanging $p$ and $q$ in the summation and using the fact that $\hat\Lp(k_{n-1})k_{n-1}=0$, the first term in the above summation vanishes.
	Since $h\in\HH,$ $\sum_j p_j\hat h_j(q)=\sum_j k_{n-1}^j\hat h_j(q),$ and \eqref{3.11} holds.
	A direct calculation shows that  $\fA_-^{N} = 0$ for $n=0,1$, and $\fA_+^{N} = 0$ for $n=0$. 
	
	Due to the anti-symmetry of $\fA^N$, it follows from the last part of the proof in \cite[Lemma 3.5]{CES21} that $\fA^N_-=-(\fA^N_+)^{*}$. 
	\eqref{eq:commute} is immediate by verifying the Fourier transforms of both sides.     
\end{proof}
Next, we prove Lemma \ref{lem:sector}.
 \begin{proof}[Proof of Lemma \ref{lem:sector}]
	For \eqref{eq:sector}, arguing as \cite[Lemma 2.4]{CGT24}, by the variational formula,
	it suffices to prove that for all $\gamma>0$, $\varphi\in\FK_n,$ $\rho\in \FK_{n+1}$, we have
	\begin{align}\label{3.17_1}
		|\langle\rho,\fA^N_+\varphi\rangle|\lesssim\lambda\sqrt{n}\left(\gamma\|(-\fL_0)^\frac12\varphi\|^2+\frac1\gamma\|(-\fL_0)^\frac12\rho\|^2\right).
	\end{align} 
	Recall the fact that for $f\in\FK_{1},\, g\in L^2(\TT^d,\RR^d)$, $\sum_{l,k}\hat f(l,-k)(\hat\Lp(k)\hat g(k))(l)=\sum_{l,k}\hat f(l,-k)\hat g(l,k).$
	Since for the operator $\fA^N_+$, by \eqref{3.10}, the Leray projections act on $k_i+k_j$ and the first component of the Fourier transform, 
	the left hand side of \eqref{3.17_1} can be written as
	\begin{align*}
		\lambda_N 2\pi (n+1)!n\left|\sum_{l_{1:n+1}=1}^d\sum_{k_{1:n+1}\in\ZZ_0^d}\hat{\rho}(l_{1:n+1},-k_{1:n+1})\fR_{k_1,k_2}^N(k_1+k_2)_{l_2}\hat{\varphi}((l_1,k_1+k_2),(l_{3:n+1},k_{3:n+1}))\right|.
	\end{align*}
	Then the proof for \eqref{3.17_1} is identical to that of (2.12) in \cite[Lemma 2.4]{CGT24}.
	
	Now we prove \eqref{eq:A-}. Regarding the operator $\fA^N_-$, by \eqref{3.11}, the Leray projection acts on the first component of the Fourier transform, while $k_j$ interacts with the second component via the inner product. Since $|\hat\Lp(k)y|\leq|y|$ for any $k$ and $y$, $\|(- \fL_0)^{-\frac{1}{2}}\fA_{-}^{N}\varphi\|^2$ has the upper bound
	\begin{align*}
		\lambda_N^2(4\pi)^2\sum_{k\in\ZZ_0^d}\left|\sum_{p+q=k}\fR^N_{p,q}|\hat\varphi(p,q)|\right|^2=\lambda_N^2(4\pi)^2\sum_{k\in\ZZ_0^d}\left|\sum_{p+q=k}\frac{\fR^N_{p,q}}{(|p|^2+|q|^2)^{\frac\alpha2}}(|p|^2+|q|^2)^{\frac\alpha2}|\hat\varphi(p,q)|\right|^2,
	\end{align*}
	for $\alpha>1.$
	By the Cauchy-Schwarz inequality and the regularity of $\varphi$, the above equation is bounded by
	\begin{align*}
		(4\pi)^2\left(\lambda_N^2\sum_{1\leq|p|\leq N}\frac{1}{|p|^{2\alpha}}\right)\|(-\fL_0)^{\frac\alpha2}\varphi\|^2,
	\end{align*}
	which converges to 0 as $N\to\infty.$ Thus \eqref{eq:A-} follows.
\end{proof}

\renewcommand{\theequation}{B.\arabic{equation}}

\section{Technical Estimates in the Replacement Lemma}\label{appenA1}
In the following, we give the technical estimate of \eqref{4.14} in the Replacement Lemma.

\begin{prop}\label{prop:A}
	For $n,N\in\NN$, $\lambda>0$, and $k_{1:n}\in\ZZ_0^d$, let $P^N$ be 
	\begin{align}
		P^N(k_{1:n}):={\lambda_N^2}\sum_{l+m=k_1}\frac{\fR_{l,m}^N\left[\sin^2\theta_1-\sin^2\theta_2\left(\sin^2\theta_1+\frac{|l|}{|m|}\cos\theta_1\cos\theta_2\right)\right]}{(2\pi)^2(|l|^2+|m|^2+|k_{2:n}|^2)(1+G(L^N((2\pi)^2(|l|^2+|m|^2+|k_{2:n}|^2))))},\label{A1}
	\end{align}
where $G$ is defined in \eqref{eq:G}. $\theta_1$ and $\theta_2$ are the angles between $k_1$ and $m,\, l$ respectively. Then there exists a constant $C$ independent of $N$ such that
\begin{align}
	\sup_{k_{1:n}\in\ZZ_0^d}|P^N(k_{1:n})-G(L^N((2\pi)^2|k_{1:n}|^2))|\leq C\lambda_N^2\,.
\end{align}
\end{prop}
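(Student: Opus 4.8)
The plan is to approximate the lattice sum defining $P^N$ by a continuous integral, reduce it to a one–dimensional integral in a suitable energy variable, and then exploit the precise algebraic form of $G$ in \eqref{eq:G} to see that the leading term reproduces $G(L^N((2\pi)^2|k_{1:n}|^2))$ \emph{exactly}, all remaining contributions being $O(\lambda_N^2)$. Write $S:=|k_{1:n}|^2$ and $R:=|k_{2:n}|^2$, so that with $m=k_1-l$ the denominators in \eqref{A1} involve $u:=|l|^2+|m|^2+R$, whose minimum over the domain is $u_{\min}=\tfrac12|k_1|^2+R\asymp S$ (attained at $l=k_1/2$). Since in $d=2$ the cutoff reduces to $\fR^N_{l,m}=\1_{|l|\le N}\1_{|m|\le N}$ and each summand decays like $|l|^{-2}$, the sum is logarithmically large and the prefactor $\lambda_N^2=\lambda^2/\log N$ from \eqref{lambda} makes $P^N$ of order one.

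First I would replace the sum over $l$ by an integral over $\RR^2$: the summand is smooth away from the singular points $l=0,\,k_1$ and decays like $\lambda_N^2|l|^{-2}$, so the Riemann–sum (Euler–Maclaurin) error, as well as the contribution of bounded neighbourhoods of the singularities, is $O(\lambda_N^2)$ uniformly in $k_{1:n}$. The dominant part comes from the annulus $|k_1|\lesssim|l|\lesssim N$, where $m\approx-l$, hence $|l|\approx|m|$, $\theta_2\approx\pi-\theta_1$ and $|l|/|m|\approx1$; there the bracketed weight in \eqref{A1} simplifies, using $\sin\theta_2\approx\sin\theta_1$ and $\cos\theta_2\approx-\cos\theta_1$, to
\[
\sin^2\theta_1-\sin^2\theta_1\left(\sin^2\theta_1-\cos^2\theta_1\right)=2\sin^2\theta_1\cos^2\theta_1=\tfrac12\sin^2(2\theta_1),
\]
whose average over the direction of $l$ equals $\tfrac14$. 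Passing to polar coordinates and substituting $u\approx2|l|^2+R$ then converts the leading part into
\[
P^N\approx\frac{\lambda_N^2}{32\pi}\int\frac{du}{u\,(1+G(L^N((2\pi)^2u)))},
\]
the $u$-integral running from $\asymp S$ up to $\asymp N^2$.

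The key identity follows from the change of variable $y=L^N((2\pi)^2u)=\lambda_N^2\log(1+N^2/((2\pi)^2u))$, for which $du/u=-\lambda_N^{-2}(1+(2\pi)^2u/N^2)\,dy$. Dropping the factor $(2\pi)^2u/N^2$ and writing $Y:=L^N((2\pi)^2S)$, the remaining integral collapses because $1+G(y)=\sqrt{y/(16\pi)+1}$:
\[
\int_0^{Y}\frac{dy}{1+G(y)}=\int_0^{Y}\frac{dy}{\sqrt{y/(16\pi)+1}}=32\pi\left(\sqrt{Y/(16\pi)+1}-1\right)=32\pi\,G(Y).
\]
Hence $P^N\approx\frac{1}{32\pi}\cdot32\pi\,G(Y)=G(L^N((2\pi)^2S))$, which shows that the form \eqref{eq:G} of $G$ is exactly the fixed point making the leading term match, and it only remains to collect the errors.

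The main obstacle is to show that every discarded term is $O(\lambda_N^2)$ \emph{uniformly} in $k_{1:n}$, i.e.\ regardless of how large $|k_1|$ or $R$ may be. The errors fall into four groups: (i) the sum-to-integral error above; (ii) the replacement of the exact weight by $\tfrac12\sin^2(2\theta_1)$ and of $|l|^2+|m|^2$ by $2|l|^2$, valid only for $|l|\gg|k_1|$—the complementary region $u\in[u_{\min},\,\text{const}\cdot S]$ is bounded in $\log u$, so by $\int_{u_{\min}}^{\mathrm{const}\cdot S}du/u=O(1)$ it contributes $O(\lambda_N^2)$ after using the triangle relation $|m|\sin\theta_1=|l|\sin\theta_2$ coming from $l+m=k_1$; (iii) the neglected factor $(2\pi)^2u/N^2$, which is $O(1)$ only on the range $u\gtrsim\varepsilon N^2$, corresponding to a $y$-interval of length $O(\lambda_N^2)$, hence again $O(\lambda_N^2)$; and (iv) the precise placement of the lower endpoint between $u_{\min}=\tfrac12|k_1|^2+R$ and $S$, which shifts $y$ by $O(\lambda_N^2)$ and thus $G$ by $O(\lambda_N^2)$. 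Each is controlled by elementary integral bounds; the delicate point, and the crux of the argument, is that the constants must not degrade as $S$ ranges over all scales from $O(1)$ up to $\asymp N^2$. This is handled by splitting according to whether $S\lesssim N^2$ or $S\asymp N^2$ and using the monotonicity of $G$ together with the growth bound $G(y)\lesssim\sqrt{y}$: when $S\asymp N^2$ both $P^N$ and $G(L^N((2\pi)^2S))$ are themselves of size $O(\lambda_N^2)$, so their difference is trivially controlled, while for smaller $S$ the matching above applies with an $O(\lambda_N^2)$ remainder.
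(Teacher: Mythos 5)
Your proposal is correct and follows essentially the same route as the paper's proof of Proposition~\ref{prop:A}: conversion of the Riemann sum to an integral, replacement of the angular weight by $2\sin^2\theta_1\cos^2\theta_1$ (with the complementary and singular regions controlled as $O(\lambda_N^2)$), reduction to a one-dimensional radial integral, the change of variables $t=L^N(\cdot)$, and the exact evaluation $\int_0^Y(1+G(t))^{-1}\,\mathrm{d}t=32\pi\,G(Y)$ identifying $G$ as the matching profile, all uniformly in $k_{1:n}$. The only cosmetic difference is that the paper inserts the factor $(1+\Gamma_1(x))$ into the denominator \emph{before} substituting (its Step 3) so that the Jacobian is exact, whereas you substitute first and then discard the correction $(2\pi)^2u/N^2$ — an equivalent $O(\lambda_N^2)$ estimate.
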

\begin{proof}
	Arguing as (A.3) of \cite{CGT24}, if $|k_1|>\frac N2$, we assume without loss of generality that $|l|\geq\frac N4$, and we have
	\begin{align*}
		|P^N(k_{1:n})|\lesssim&\lambda_N^2\sum_{\frac N4\leq|l|\leq N}\left(\frac{1}{|l|^2}+\frac{1}{|l||k_1-l|}\right)
		\lesssim\lambda_N^2\int_{\frac14\leq|x|\leq 1}\left(\frac{1}{|x|^2}+\frac{1}{|x||\frac{k_1}{N}-x|}\right)\lesssim \lambda_N^2.
	\end{align*}
	 Thus we only consider the case when $|k_1|\leq \frac N2$. For simplicity, for $x\in\RR^2$ and $|x|\leq1$, define $$\beta_N:=\left|\frac{k_{2:n}}{N}\right|^2,\,\quad\Gamma(x):=(2\pi)^2\left(|x|^2+\left|\frac{k_1}{N}-x\right|^2+\beta_N\right),\quad\Gamma_1(x):=(2\pi)^2\left(2|x|^2+\left|\frac{k_{1:n}}{N}\right|^2\right).$$
	 In the following steps, we simplify $P^N$ by resplcing it with several integrals, i.e. to construct $P^N_i$ and show that 
	 \begin{align}\label{A3}
	 	\sup_{k_{1:n}\in\ZZ_0^d}|P^N_i(k_{1:n})-P^N_{i+1}(k_{1:n})|\lesssim\lambda_N^2
	 \end{align}
	  for $i=0,\cdots,4$ and $P^N_0:=P^N.$
	  
	 \noindent {\bf Step 1} First, we convert the Riemann sum into an integral. Define $P^N_1(k_{1:n})$ as 
	 \begin{align}\label{PN1}
	 	{\lambda_N^2}\int \ud x\frac{\fR_{Nx,k_1-Nx}^N\left[\sin^2\theta_1-\sin^2\theta_2\left(\sin^2\theta_1+\frac{|x|}{|\frac{k_1}{N}-x|}\cos\theta_1\cos\theta_2\right)\right]}{\Gamma(x)(1+G(L^N(N^2\Gamma(x))))},
	 \end{align}
	 where $\theta_1,\,\theta_2$ are the angles between $N^{-1}k_1$ and $N^{-1}k_1-x,\, x$ respectively.
	 As argued in Step 4 in the proof of \cite[Proposition A.1]{CGT24} and \cite[Lemma C.7]{CET23a}, denote $I$ as the
	 integrand in \eqref{PN1} and $Q^N_l$ as the square of side-length $\frac1N$ centred at $N^{-1}l$ with $1\leq|l|\leq N$. Then we have
	 \begin{align*}
	 	\sup_{k_{1:n}\in\ZZ_0^d}|P^N(k_{1:n})-P^N_{1}(k_{1:n})|
	 	&\lesssim\lambda_N^2\sum_{1\leq|l|\leq N}\int_{Q^N_l}|I(N^{-1}l)-I(x)|\ud x
	 	\leq \lambda_N^2\frac{1}{N^3}\sum_{1\leq|l|\leq N}\sup_{y\in Q_l^N}|\nabla I(y)|.
	 \end{align*}
	 Since $$\sup_{y\in Q_l^N}|\nabla I(y)|\lesssim \frac{1}{|N^{-1}l|^3}+\frac{1}{|N^{-1}l|^2|N^{-1}k_1-N^{-1}l|}+\frac{1}{|N^{-1}l||N^{-1}k_1-N^{-1}l|^2},$$
	 \eqref{A3} holds for $i=0$.
	 
	 \noindent {\bf Step 2} As $N\to\infty$, for fixed $x\in\RR^2$, the difference between $\sin^2\theta_1-\sin^2\theta_2\left(\sin^2\theta_1+\frac{|x|\cos\theta_1\cos\theta_2}{|\frac{k_1}{N}-x|}\right)$ in \eqref{PN1} and $2\sin^2\theta_1\cos^2\theta_1$ vanishes. We define $P^N_2(k_{1:n})$ as 
	  \begin{align}\label{PN2}
	 	\lambda_N^2\int \ud x\frac{2\fR_{Nx,k_1-Nx}^N\sin^2\theta_1\cos^2\theta_1}{\Gamma(x)(1+G(L^N(N^2\Gamma(x))))}.
	 \end{align}
 Then we have
	 \begin{align}
	 	\sup_{k_{1:n}\in\ZZ_0^d}|P_1^N(k_{1:n})-P^N_{2}(k_{1:n})|
	 	\lesssim&\lambda_N^2\int \ud x\frac{\fR^N_{Nx,k_1-Nx}|\cos^2\theta_2-\cos^2\theta_1|}{\Gamma(x)}\nonumber\\
	 	&+\lambda_N^2\int \ud x\frac{\fR^N_{Nx,k_1-Nx}|\sin^2\theta_2\cos\theta_2|x|+\sin^2\theta_1\cos\theta_1|N^{-1}k_1-x||}{\Gamma(x)|N^{-1}k_1-x|}\nonumber\\
	 	=:&I+II.\label{A6}
	 \end{align}
	 
	 For the first integral $I$ in the right hand side of \eqref{A6}, as the proof of (A.10) in \cite{CET23b}, 
	 we split it over two regions corresponding to $|N^{-1}k_1-x|\leq\frac12|N^{-1}k_1|$ and $|N^{-1}k_1-x|>\frac12|N^{-1}k_1|.$
	 For the former, we have $|x|\in[\frac12|N^{-1}k_1|,\frac32|N^{-1}k_1|].$ Then the integral is bounded by a constant times $$\lambda_N^2\int_{\frac12|k_1/N|}^{\frac32|k_1/N|}\frac{1}{|x|^2}\ud x\lesssim \lambda_N^2.$$
	 For $I$ in the second region, since
	 \begin{align}
	 	|\cos^2\theta_1-\cos^2\theta_2|=&\left|\frac{(k_1\cdot(k_1/N-x))^2}{|k_1|^2|k_1/N-x|^2}-\frac{(k_1\cdot x)^2}{|k_1|^2|x|^2}\right|\nonumber\\
	 	\leq&\frac{|(k_1\cdot(k_1/N-x))^2-(k_1\cdot x)^2|}{|k_1|^2|k_1/N-x|^2}+\frac{(k_1\cdot x)^2}{|k_1|^2}\left|\frac{1}{|k_1/N-x|^2}-\frac{1}{|x|^2}\right|\nonumber\\
	 	\lesssim&\frac{|k_1/N||k_1/N-2x|}{|k_1/N-x|^2}\lesssim\frac{|k_1/N|}{|k_1/N-x|},\label{A8}
	 \end{align}
	 it has the bound
	 \begin{align}
	 	\lambda_N^2\int_{|k_1/N-x|>\frac12|k_1/N|}\frac{|k_1/N|}{|k_1/N-x|^3}\ud x\lesssim\lambda_N^2.\label{A9}
	 \end{align}
 
	 For the second integral $II$, we add and subtract a term from the integrand so that $II$ is bounded by
	 \begin{align}
	 	\lambda_N^2\int \ud x\frac{\fR^N_{Nx,k_1-Nx}||x|-|N^{-1}k_1-x||}{\Gamma(x)|N^{-1}k_1-x|}+\lambda_N^2\int \ud x\frac{\fR^N_{Nx,k_1-Nx}|\sin^2\theta_2\cos\theta_2+\sin^2\theta_1\cos\theta_1|}{\Gamma(x)}.\label{A7}
	 \end{align}
	 The first integral in \eqref{A7} has the bound \begin{align*}
	 	\lambda_N^2\left|\frac{k_1}{N}\right|\int \ud x\frac{\fR_{Nx,k_1-Nx}^N}{|N^{-1}k_1-x|\Gamma(x)}
	 	\lesssim\lambda_N^2\left|\frac{k_1}{N}\right|\int_0^1\frac{\ud r}{r^2+|N^{-1}k_1|^2}
	 	\lesssim \lambda_N^2.
	 \end{align*}
 For the second integral in \eqref{A7}, we have $$|\sin^2\theta_2\cos\theta_2+\sin^2\theta_1\cos\theta_1|\lesssim|\cos^2\theta_1-\cos^2\theta_2|+|\cos\theta_1+\cos\theta_2|,$$ and the last term $|\cos\theta_1+\cos\theta_2|$ equals
	 \begin{align*}
	 	\left|\frac{k_1\cdot(k_1/N-x)}{|k_1||k_1/N-x|}+\frac{k_1\cdot x}{|k_1||x|}\right|
	 	\leq&\frac{|k_1\cdot(k_1/N-x)+k_1\cdot x|}{|k_1||k_1/N-x|}+\frac{|k_1\cdot x|}{|k_1|}\left|\frac{1}{|k_1/N-x|}-\frac{1}{|x|}\right|\\
	 	\lesssim&\frac{|k_1/N|}{|k_1/N-x|}\,.
	 \end{align*}
	 Combining with \eqref{A8} and \eqref{A9}, \eqref{A3} holds for $i=1$.

\noindent {\bf Step 3} We now replace $\Gamma(x)$ with $\Gamma_1(x)$, i.e. define $P^N_3$ as
\begin{align}
	\lambda_N^2\int \ud x\frac{2\fR_{Nx,k_1-Nx}^N\sin^2\theta_1\cos^2\theta_1}{\Gamma_1(x)(1+G(L^N(N^2\Gamma_1(x))))}.
\end{align}
Recall that $G$ is continuous. Since $N^2\Gamma(x)\gtrsim1$, we have $L^N(N^2\Gamma(x))\in[0,C],$ for some $C$. 
{\eqref{A3} can be easily seen to hold by arguing as in the proof of \cite[(A.8)]{CET23b}.}
To apply a change of variables, we also define $P^N_4$ as
\begin{align}
	\lambda_N^2\int \ud x\frac{2\fR_{Nx,k_1-Nx}^N\sin^2\theta_1\cos^2\theta_1}{\Gamma_1(x)(1+\Gamma_1(x))(1+G(L^N(N^2\Gamma_1(x))))},
\end{align}
and \eqref{A3} is immediate for $i=3$. 

\noindent {\bf Step 4} At this step, we replace $P^N_4(k_{1:n})$ by
\begin{align}\label{PN5}
P^N_5(k_{1:n}):=\lambda_N^2\int_{|x|\leq1} \ud x\frac{2\sin^2\theta_1\cos^2\theta_1}{\Gamma_1(x)(1+\Gamma_1(x))(1+G(L^N(N^2\Gamma_1(x))))}.
\end{align}
Arguing as Step3 in \cite[Proposition A.1]{CGT24}, since $|k_1|\leq \frac N2$, and $\Gamma_1\gtrsim|x|^2$, $\sup_{k_{1:n}\in\ZZ_0^d}|P_4^N(k_{1:n})-P^N_{5}(k_{1:n})|$ is bounded by a constant times
	$$\lambda_N^2\int_{\frac12\leq|x|\leq1}\frac{1}{|x|^2}\ud x\lesssim\lambda_N^2.$$
At last, we focus on $P^N_5(k_{1:n})$. Set $\alpha_N:=|\frac{k_{1:n}}{N}|^2$. Then we have 
	\begin{align*}
		P^N_5(k_{1:n})=&\frac{\lambda^2}{\log N}\int_{|x|\leq1} \frac{2\sin^2\theta_1\cos^2\theta_1\ud x}{(2\pi)^2(2|x|^2+\alpha_N)((2\pi)^2(2|x|^2+\alpha_N)+1)(1+G(L^N(N^2(2\pi)^2(2|x|^2+\alpha_N))))}\\
		=&\frac{\lambda^2}{8\pi^2\log N}\int_0^{2\pi}\frac{\sin^22\theta}{2}\ud\theta\int_{0}^1 \frac{\ud r}{(2r+\alpha_N)((2\pi)^2(2r+\alpha_N)+1)(1+G(L^N(N^2(2\pi)^2(2r+\alpha_N))))}\\
		=&\frac{\lambda^2}{16\pi\log N}\int_{0}^1 \frac{\ud r}{(2r+\alpha_N)((2\pi)^2(2r+\alpha_N)+1)(1+G(L^N(N^2(2\pi)^2(2r+\alpha_N))))}\,.
	\end{align*}
	Let $t:=L^N(N^2(2\pi)^2(2r+\alpha_N))$. Then $\ud t=\frac{-2\lambda^2}{\log N(2r+\alpha_N)(1+(2\pi)^2(2r+\alpha_N))}\ud r$. The above integral equals
	\begin{align*}
		\frac{1}{32\pi}\int_{L^N(N^2(2\pi)^2(2+\alpha_N))}^{L^N(N^2(2\pi)^2\alpha_N)} \frac{\ud t}{1+G(t)}.
	\end{align*}
 It is immediate to verify that, with an error of order $\lambda_N^2$, we can replace the lower integration index with 0, and obtain
	\begin{align*}
	&\frac{1}{32\pi}\int_{0}^{L^N((2\pi)^2|k_{1:n}|^2)} \frac{\ud t}{1+G(t)}.
\end{align*}
Thus by the definition of $G$ in \eqref{eq:G}, the proof is complete.

\end{proof}

\renewcommand{\theequation}{C.\arabic{equation}}

\section{Proof of Theorem \ref{thm:F-D} when $d\geq3$}\label{appendixB}
{In this appendix, we prove Theorem \ref{thm:F-D} given Proposition \ref{prop:limite}. By Proposition \ref{prop:limite}, arguing as in the proof of \cite[Lemma 2.16, Lemma 2.17 and Proposition 2.13]{CGT24},
for any $a_1,a_2\in2\NN\,, m_1,m_2\geq2$, we have
\begin{align*}
	&\lim_{N\to\infty} \sum_{{p^1\in \Pi^{(n)}_{a_1,1},\, p^2\in \Pi^{(n)}_{a_2,1}}}\langle \mathcal T^N_{p^1} \sigma_{\bj,\bt},\overline {\mathcal T^N_{p^2} \sigma_{\bj,\bt}}\rangle= c(a_1,\lambda) c(a_2,\lambda)
	\|\sigma_{\bj,\bt}\|^2\,,\\
	&\lim_{N\to\infty}\sum_{{p^1\in \Pi^{(n)}_{a_1,m_1},\, p^2\in \Pi^{(n)}_{a_2,m_2}}} \langle(-\fL_0)^{-1}\fT_{p^1}^N\sigma_{\bj,\bt},\overline{\fT_{p^2}^N\sigma_{\bj,\bt}}\rangle= 0\,,
\end{align*}
where $c(a,\lambda)=1$ if $a=0$, and for $a\geq2$, $c(a,\lambda)$ is the one in Proposition \ref{prop:limite}.} 
Then similar as the proof of \cite[Proposition 2.12]{CGT24} by replacing $\{e_k\}_k$ with $\{\sigma_{k,\alpha}\}_{k,\alpha}$, we have the following Proposition.
\begin{prop}\label{prop:Nlim}
	For $d\geq3$ and $n$ fixed, there exists a unique constant $D^n>0$ such that, for $i=2,3$, 
	\begin{align*}
		\lim_{N\to\infty}\|(-\fL_0)^{-\frac12}(\fA_-^Nv_i^{N,n}-D^{n-i+2}\fL_0 f_{i-1})\|=0\,,\quad
		\lim_{N\to\infty}\|v^{N,n}\|=0\,.
	\end{align*}
\end{prop}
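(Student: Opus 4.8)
The plan is to follow the random-walk expansion of the truncated resolvent from \cite{CGT24}, now carried out in the divergence-free basis $\{\sigma_{k,\alpha}\}$ rather than the scalar Fourier basis $\{e_k\}$. Conjugating \eqref{eq:geneq} by $(-\fL_0)^{1/2}$ turns it into $(I-T^N_{i,n})(-\fL_0)^{1/2}v^{N,n}=T^{N,+}(-\fL_0)^{1/2}f_{i-1}$, and since $T^N_{i,n}$ is bounded uniformly in $N$ and skew-Hermitian by Lemma \ref{lem:sector}, the Neumann series $(I-T^N_{i,n})^{-1}=\sum_{a\ge0}(T^N_{i,n})^a$ converges and expands into a sum over the simple-random-walk paths of \eqref{4.19}. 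Thus both $(-\fL_0)^{1/2}v^{N,n}$ and $(-\fL_0)^{-1/2}\fA_-^Nv_i^{N,n}$ become finite combinations of the operators $\fT^N_p$ acting on $\sigma_{\bj,\bt}$, once $f_{i-1}$ is written in the $\sigma$-basis. The whole proof is then reduced to computing the $N\to\infty$ limits of inner products of such combinations, which is exactly what the two displays preceding the proposition provide.

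For the first identity I would note that $(-\fL_0)^{-1/2}\fA_-^Nv_i^{N,n}$ is governed by the excursions that return to the bottom level, i.e.\ by paths in $\Pi^{(n)}_{a,1}$. Pairing two such excursions and invoking the first display, which factorises as $c(a_1,\lambda)c(a_2,\lambda)\|\sigma_{\bj,\bt}\|^2$, shows that the square and the cross terms assemble into a perfect square; together with the decoupling $\1_{\bj'=\bj}\1_{\bt'=\bt}$ and the $\bj,\bt$-independence of $c(a,\lambda)$ from Proposition \ref{prop:limite}, this forces the limit of $\fA_-^Nv_i^{N,n}$ to be a scalar multiple of $(-\fL_0)f_{i-1}$, the scalar being the resummed return series, which I define to be $D^{n-i+2}$ and which is independent of $\varphi,\psi$. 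The reason the constant depends only on $n-i+2$ is that the excursion count for the datum $f_2\in\FK_2$ inside the box of heights $[2,n]$ is in bijection, after shifting the floor down by one, with the count for $f_1\in\FK_1$ inside $[1,n-1]$; verifying this shift-invariance is the key combinatorial step.

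For the norm bound I would expand $\|v^{N,n}\|^2=\langle(-\fL_0)^{-1}(-\fL_0)^{1/2}v^{N,n},(-\fL_0)^{1/2}v^{N,n}\rangle$ over pairs of paths; since $v^{N,n}\in\oplus_{j\ge i}\FK_j$ with $i\ge2$, every surviving pair ends at heights $m_1,m_2\ge2$, so the second display yields $\|v^{N,n}\|\to0$. Finally, $D^{n-i+2}>0$ because its leading contribution comes from the single up–down excursion $a=2$ and equals $c(2,\lambda)>0$, while uniqueness is immediate once $D^{n-i+2}$ is pinned down as the coefficient in the limiting fluctuation–dissipation relation. The main obstacle is the first identity: one must check that, after $N\to\infty$, the limit of $\fA_-^Nv_i^{N,n}$ is exactly proportional to $(-\fL_0)f_{i-1}$ with no residual coupling between the vector components, and that the proportionality constant matches across $i=2,3$ through the index shift. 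Both facts rely crucially on the decoupling and the rotational invariance established in Proposition \ref{prop:limite}.
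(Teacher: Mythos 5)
Your overall architecture is the same as the paper's (the random-walk path expansion of the truncated resolvent in the $\sigma_{k,\alpha}$-basis, the two factorized limit displays to identify the drift coefficient and to kill $\|v^{N,n}\|$, and the floor-shift bijection explaining why the constant depends only on $n-i+2$), but there is a genuine gap at the very first step. You claim that, because $T^N_{i,n}$ is uniformly bounded and skew-Hermitian, the Neumann series $(I-T^N_{i,n})^{-1}=\sum_{a\ge0}(T^N_{i,n})^a$ converges. Skew-Hermiticity gives invertibility of $I-T^N_{i,n}$ with $\|(I-T^N_{i,n})^{-1}\|\le 1$ (the spectrum is purely imaginary), but it does \emph{not} give norm convergence of the series: the graded sector bound \eqref{eq:sector} only yields $\|T^N_{i,n}\|\lesssim\lambda\sqrt{n}$, which is not $<1$, and for a skew-Hermitian operator of norm exceeding $1$ the series diverges (take $T=\iota\kappa\,\mathrm{Id}$ with $\kappa>1$). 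The paper avoids this by the Borel-type resummation used in \eqref{C.3}: $(-\fL^N_{2,n})^{-1}$ is represented via $\int_0^\infty e^{-s}e^{sT^N_{2,n}}\,ds$, where for each fixed $s$ the exponential series $\sum_a \frac{s^a}{a!}(T^N_{2,n})^a$ converges absolutely, and unitarity $\|e^{sT^N_{2,n}}\|=1$ supplies the domination needed to interchange the $s$-integral with the $N\to\infty$ limit. Your path expansion survives intact under this representation (each power $(T^N_{i,n})^a$ still expands over $\Pi^{(n)}_{a,m}$), but without it the sum over $a$ that defines your $D^{n-i+2}$ is not justified, since $\sum_a|c(a,\lambda)|$ with unit weights need not converge.

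A second, smaller flaw is your positivity argument. You assert $D^{n-i+2}>0$ because "the leading contribution comes from the single up--down excursion $a=2$"; but $\lambda$ is a fixed constant, not small, and the excursion series has alternating signs (compare $f_l=(-1)^{l-1}\lim\|(T^{N,*}_{2,n})^{l-1}T^{N,+,*}\sigma_{k,1}\|^2$ in \eqref{eq:expansion}), so the $a=2$ term does not dominate a priori. The paper instead proves strict positivity by coercivity: using the variational formula and \eqref{eq:sector} one gets $\langle(-\fL^N_{2,n})^{-1}\fA^N_+\sigma_{k,1},\fA^N_+\sigma_{k,1}\rangle\ge\frac{1}{1+C}\|(-\fL_0)^{-\frac12}\fA^N_+\sigma_{k,1}\|^2$, together with the momentum-operator commutation \eqref{eq:commute} and the vanishing of the $\alpha\neq1$ components (as in \eqref{C.9}--\eqref{C.10}) to pin $\fA_-^Nv_2^{N,n}[-k]$ onto the single mode $\sigma_{k,1}$, and then an explicit lower bound on the limiting integral. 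You should replace your leading-order heuristic with this variational lower bound; the rest of your argument (cross terms assembling into the vanishing quadratic form, the second display forcing $\|v^{N,n}\|\to0$, and the reliance on the decoupling and rotational invariance of Proposition \ref{prop:limite}) is sound and matches the paper's route.
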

Now We prove Theorem \ref{thm:F-D} using Proposition \ref{prop:Nlim}. \eqref{5.2} follows from the proof of \cite[Theorem 2.7]{CGT24} and \eqref{eq:sector}. We only need to prove \eqref{5.3}.
Since the sequence $\{D^n\}_n$ is Cauchy by the proof of \cite[Theorem 2.11]{CGT24} in $d\geq3$, we only need to prove that the limit of $\{D^n\}_n$ is positive.

Fix $k\in\ZZ_0^d$.
Let $v^{N,n}[-k]$ be the solution to \eqref{eq:geneq} with $f_1=\sigma_{k,1}$ and $i=2.$
Then we have
\begin{align}
	D^n2\pi|k|=&\|(-\fL_0)^{-\frac12}(-D^n\fL_0)f_1\|\nonumber\\
	\leq&
\|(-\fL_0)^{-\frac12}\fA_-^Nv_2^{N,n}[-k]\|-\|(-\fL_0)^{-\frac12}(\fA_-^Nv_2^{N,n}[-k]-D^{n}\fL_0 f_1)\|.\label{C1}
\end{align}
By Proposition \ref{prop:Nlim}, the second term vanishes as $N\to\infty$. For the first term, by \eqref{eq:div}, we have
\begin{align*}
	\fA_-^Nv_2^{N,n}[-k](x)=-\sum_{\alpha=1}^{d-1}\sum_{j\in\ZZ_0^d}\langle v_2^{N,n}[-k],\fA_+^N\sigma_{-j,\alpha}\rangle \sigma_{j,\alpha}(x)\,.
\end{align*}
Recall that for $1\leq i\leq d$, $M_i$ is the momentum operator and commutes with $\fL_0,\fA^N_{+},\fA^N_{-}$ by \eqref{eq:commute}. Then $-\fL^N_{2,n}M_iv^{N,n}=\fA^N_+M_i\sigma_{k,1}=\iota2\pi k_i\fA^N_+\sigma_{k,1},$ and $M_iv^{N,n}=\iota2\pi k_iv^{N,n}.$ For $1\leq i\leq d$, $j\in \ZZ_0^d$,
\begin{align*}
	\langle v_2^{N,n}[-k],\fA_+^N\sigma_{-j,\alpha}\rangle&=
	\frac{1}{\iota2\pi k_i}\langle M_iv_2^{N,n}[-k],\fA_+^N\sigma_{-j,\alpha}\rangle
	=-\frac{1}{\iota2\pi k_i}\langle v_2^{N,n}[-k],\fA_+^NM_i\sigma_{-j,\alpha}\rangle\\
	&=\frac{j_i}{k_i}\langle v_2^{N,n}[-k],\fA_+^N\sigma_{-j,\alpha}\rangle\,.
\end{align*}
Therefore $\fA_-^Nv_2^{N,n}[-k]$ only has the $k-$th component, i.e.
\begin{align}\label{C.9}
	\fA_-^Nv_2^{N,n}[-k](x)=\sum_{\alpha=1}^{d-1}\langle \fA_-^Nv_2^{N,n}[-k],\sigma_{-k,\alpha}\rangle \sigma_{k,\alpha}(x)\,.
\end{align}
Since $\langle\fA_-^Nv_2^{N,n}[-k],\sigma_{-k,\alpha}\rangle=-\langle (-\fL^N_{2,n})^{-1}\fA^N_+\sigma_{k,1},\fA^N_+\sigma_{-k,\alpha}\rangle,$
by \cite[(5.66)]{CT24} and Fubini's Theorem,
\begin{align}
	\langle\fA_-^Nv_2^{N,n}[-k],\sigma_{-k,\alpha}\rangle=&-(2\pi)^2|k|^2\int_0^\infty e^{-s}\langle e^{sT^N_{2,n}}T^{N,+}\sigma_{k,1},T^{N,+}\sigma_{-k,\alpha}\rangle ds\nonumber\\
	=&-(2\pi)^2|k|^2\int_0^\infty e^{-s}\sum_{a=0}^\infty\frac{s^a}{a!}\langle (T^N_{2,n})^aT^{N,+}\sigma_{k,1},T^{N,+}\sigma_{-k,\alpha}\rangle ds.\label{C.3}
\end{align}
By \eqref{eq:liminner1} (still holds for each $N$ without taking the limit), 
for each $a\in\NN,$ $-\langle (T^N_{2,n})^aT^{N,+}\sigma_{k,1},T^{N,+}\sigma_{-k,\alpha}\rangle=\sum_{p\in\Pi_{a+2,1}^{(n)}}\langle \fT_p^N\sigma_{k,1},\sigma_{-k,\alpha}\rangle$ vanishes if $\alpha\neq1.$ Therefore we have
\begin{align}\label{C.10}
	\fA_-^Nv_2^{N,n}[-k](x) = \langle \fA_-^Nv_2^{N,n}[-k],\sigma_{-k,1}\rangle \sigma_{k,1}(x) = -\langle (-\fL^N_{2,n})^{-1}\fA^N_+\sigma_{k,1},\fA^N_+\sigma_{-k,1}\rangle \sigma_{k,1}(x),
\end{align}
which implies that
$\|(-\fL_0)^{-\frac12}\fA_-^Nv_2^{N,n}[-k]\|\geq\frac{1}{2\pi|k|}\|(-\fL_0)^\frac12v^{N,n}[-k]\|^2.$
By variational formula and \eqref{eq:sector}, there exists a constant $C$ such that
\begin{align*}
&\|(-\fL_0)^\frac12v^{N,n}[-k]\|^2\geq\frac{1}{1+C}\|(-\fL_0)^{-\frac12}\fA_+^N\sigma_{-k,1}\|^2\\
=&\frac{\lambda_N^2}{4(1+C)}\sum_{l_{1:2}}\sum_{l+m=k}\frac{|\fR_{l,m}^N|^2}{(|l|^2+|m|^2)}\left|(\hat\Lp(l)k)(l_1)(\hat\Lp(m)a_{k,1})(l_2)+(\hat\Lp(m)k)(l_2)(\hat\Lp(l)a_{k,1})(l_1)\right|^2.
\end{align*} 
Thus, as $N\to\infty,$ $D^n$ is greater than 
\begin{align*}
	&\frac{1}{(2\pi)^2|k|^2}\frac{\lambda^2}{1+C}\int_{|x|\leq1}\frac{1}{4|x|^2}\left[|k|^2\sin^2\theta_1\left(1-\frac{(a_{k,1}\cdot x)^2}{|x|^2}\right)+\frac{(k\cdot x)^2(a_{k,1}\cdot x)^2}{|x|^4}\right]\ud x\\
	\geq&\frac{1}{(2\pi)^2|k|^2}\frac{\lambda^2}{1+C}\int_{|x|\leq1}\frac{|k|^2\cos^2\theta_1\cos^2\theta_2}{4|x|^2}\ud x,
\end{align*}
where $\theta_1$ ($\theta_2$) is the angle between $x$ and $k$ ($a_{k,1}$).
Thus the limit $D$ of $D^n$ is strictly positive.

\textbf{Acknowledgements:} 
The authors would like to thank Huanyu Yang and Lukas Gr\"{a}fner for helpful discussions. 

\def\cprime{$'$} \def\ocirc#1{\ifmmode\setbox0=\hbox{$#1$}\dimen0=\ht0
	\advance\dimen0 by1pt\rlap{\hbox to\wd0{\hss\raise\dimen0
			\hbox{\hskip.2em$\scriptscriptstyle\circ$}\hss}}#1\else {\accent"17 #1}\fi}

\end{document}